\definecolor{gr}{rgb}   {0.,   0.69,   0.23 }
\definecolor{bl}{rgb}   {0.,   0.5,   1. }
\definecolor{mg}{rgb}   {0.85,  0.,    0.85}
\definecolor{yl}{rgb}   {0.8,  0.7,   0.}
\definecolor{or}{rgb}  {0.7,0.2,0.2}
\tikzset{
	ddot/.style={circle,fill=white,draw=black,inner sep=0pt,minimum size=0.8mm},
	>=stealth,
	}
\tikzset{
	ddot2/.style={circle,fill=black,draw=black,inner sep=0pt,minimum size=0.8mm},
	>=stealth,
	}
\newtheorem{theorem}{Theorem} [section]
\newtheorem{lemma}[theorem]{Lemma}
\newtheorem{proposition}[theorem]{Proposition}
\newtheorem{remark}[theorem]{Remark}
\DeclareMathOperator*{\supp}{supp}
\newcommand{\I}{\mathcal{I}}
\newcommand{\noi}{\noindent}
\newcommand{\Z}{\mathbb{Z}}
\newcommand{\R}{\mathbb{R}}
\newcommand{\C}{\mathbb{C}}
\newcommand{\T}{\mathbb{T}}
\newcommand{\E}{\mathbb{E}}
\newcommand{\F}{\mathcal{F}}
\newcommand{\al}{\alpha}
\newcommand{\dl}{\delta}
\newcommand{\Dl}{\Delta}
\newcommand{\eps}{\varepsilon}
\newcommand{\g}{\gamma}
\newcommand{\G}{\Gamma}
\newcommand{\s}{\sigma}
\newcommand{\ft}{\widehat}
\newcommand{\wt}{\widetilde}
\newcommand{\cj}{\overline}
\newcommand{\dt}{\partial_t}
\newcommand{\ta}{\theta}
\renewcommand{\l}{\ell}
\renewcommand{\o}{\omega}
\renewcommand{\O}{\Omega}
\newcommand{\les}{\lesssim}
\newcommand{\ges}{\gtrsim}
\newcommand{\jb}[1]
{\langle #1 \rangle}
\newcommand{\ind}{\mathbf 1}
\newcommand{\N}{\mathbb{N}}
\renewcommand{\H}{\mathcal{H}}
\newtheorem*{ackno}{Acknowledgements}
\numberwithin{equation}{section}
\numberwithin{theorem}{section}
\begin{document}
\baselineskip = 14pt

\selectlanguage{english}

\title[Probabilistic well-posedness of the 2D quadratic NLS]{On the probabilistic well-posedness of the two-dimensional periodic nonlinear Schr\"odinger equation with the quadratic nonlinearity $|u|^2$}

\author[R.~Liu]{Ruoyuan Liu}

\address{Ruoyuan Liu,  School of Mathematics\\
The University of Edinburgh\\
and The Maxwell Institute for the Mathematical Sciences\\
James Clerk Maxwell Building\\
The King's Buildings\\
Peter Guthrie Tait Road\\
Edinburgh\\ 
EH9 3FD\\
 United Kingdom}

\email{ruoyuan.liu@ed.ac.uk}

\subjclass[2020]{35Q55, 35R60}

\keywords{nonlinear Schr\"odinger equation; random initial data; well-posedness; ill-posedness}

\maketitle

\vspace{-10mm}

\begin{abstract}
We study the two-dimensional periodic nonlinear Schr\"odinger equation (NLS) with the quadratic nonlinearity $|u|^2$. In particular, we study the quadratic NLS with random initial data distributed according to a fractional derivative (of order $\al \geq 0$) of the Gaussian free field. After removing the singularity at the zeroth frequency, we prove that the quadratic NLS is almost surely locally well-posed for $\al < \frac 12$ and is probabilistically ill-posed for $\al \geq \frac 34$ in a suitable sense. The probabilistic ill-posedness result shows that in the case of rough random initial data and a quadratic nonlinearity, the standard probabilistic well-posedness theory for NLS breaks down before reaching the critical value $\al = 1$ predicted by the scaling analysis due to Deng, Nahmod, and Yue (2019), and thus this paper is a continuation of the work by Oh and Okamoto (2021) on stochastic nonlinear wave and heat equations by building an analogue for NLS.
\end{abstract}

\begin{otherlanguage}{french}
\begin{abstract}

Nous \'etudions l'\'equation de Schr\"odinger  non lin\'eaire (NLS) 
avec la non-lin\'earit\'e quadratique $|u|^2$
sur  un tore 
de dimension deux.
En particulier, nous \'etudions  NLS quadratique avec 
une donn\'ee  initiale al\'eatoire distribu\'ee selon une d\'eriv\'ee fractionnaire (d'ordre $ \al \geq 0$) du 
champ libre gaussien. 
Apr\`es suppression de la singularit\'e \`a la fr\'equence z\'ero, nous prouvons que  NLS quadratique est presque s\^urement localement bien pos\'e pour $\al < \frac 12$ et est 
 mal pos\'e pour $\al \geq \frac 34$ dans un sens probabiliste appropri\'e. 
 Le fait  que NLS quadratique soit mal pos\'e dans un  sens probabiliste
 montre que dans le cas de donn\'ees initiales al\'eatoires 
 \`a basse regularit\'e
  et d'une non-lin\'earit\'e quadratique, 
 la th\'eorie de Cauchy probabiliste standard 
 pour NLS 
 perd sa 
validit\'e
 avant d'atteindre 
 le valeur critique $\al = 1$ pr\'edite par l'analyse 
 due \`a Deng, Nahmod et Yue (2019).
Cet article est donc une continuation, dans le cas de NLS,  des travaux de Oh et Okamoto (2021) sur les \'equations stochastiques non lin\'eaires 
des ondes et du chaleur.

\end{abstract}
\end{otherlanguage}

\tableofcontents

\section{Introduction}

\subsection{Quadratic NLS with random initial data}
\label{SUBSEC:ini}

We consider the Cauchy problem for the following quadratic nonlinear Schr\"odinger equation (NLS) on the two-dimensional torus $\T^2 = (\R / 2\pi \Z)^2$:
\begin{equation}
\begin{cases}
i \dt u + \Dl u = |u|^2 - \fint |u|^2 \\
u|_{t=0} = u_0^\o.
\end{cases}
\label{qNLS}
\end{equation}

\noi
Here, $\fint f(x) dx := \tfrac{1}{(2 \pi)^2} \int_{\T^2} f(x) dx$ and $u_0^\o$ is the following Gaussian random initial data:
\begin{align}
u_0^\o (x) = \sum_{n \in \Z^2} \frac{g_n(\o)}{ \jb{n}^{1 - \al} } e^{i n \cdot x},
\label{ini}
\end{align}

\noi
where $\al \in \R$ and $\{g_n\}_{n \in \Z^2}$ is a set of independent standard complex-valued Gaussian random variables with $\E g_n = 0$ and $\E |g_n|^2 = 1$. Note that when $\al = 0$, $u_0^\o$ is the Gaussian random initial data distributed according to the massive Gaussian free field on $H^s (\T^2)$, $s < 0$.

Over the past several decades, we have witnessed tremendous progress on well-posedness issues of NLS with various types of nonlinearities from both deterministic and probabilistic points of views. Let us first briefly mention the deterministic well-posedness results for NLS on periodic domains. In \cite{Bour93}, Bourgain introduced the Fourier restriction norm method (see Subsection \ref{SUBSEC:Xsb}) and proved NLS with a gauge-invariant nonlinearity in the low regularity setting. In particular, he showed local well-posedness of the cubic NLS (i.e.~with nonlinearity $|u|^2 u$) in $H^s (\T^2)$ for any $s > 0$ by proving the following $L^4$-Strichartz estimate on $\T^2$ (See also Lemma \ref{LEM:L4}):
\begin{align}
\| e^{it \Dl} u \|_{L^4([-1, 1]; L^4(\T^2))} \les \| u \|_{H^s (\T^2)},
\label{L4d}
\end{align}

\noi
for any $s > 0$. We now focus on the following quadratic NLS:
\begin{align}
i \dt u + \Dl u = |u|^2.
\label{qNLSd}
\end{align}

\noi
For \eqref{qNLSd}, one can easily obtain local well-posedness in $H^s (\T^2)$ for $s > 0$ by using the $L^3$-Strichartz estimate with a derivative loss, which follows from interpolating \eqref{L4d} and the trivial $L^2$ bound. In \cite{Kish19}, Kishimoto proved ill-posedness of \eqref{qNLSd} in $H^s(\T^2)$ for $s < 0$. In a recent preprint \cite{LO}, the author and Oh proved local well-posedness of \eqref{qNLSd} in $H^0(\T^2) = L^2 (\T^2)$, thus completing the deterministic well-posedness theory of \eqref{qNLSd}.

We now turn our attention to NLS with rough random initial data. The idea of constructing local-in-time solutions of NLS using random initial data was first introduced by Bourgain in \cite{Bour96}, where he proved almost sure local well-posedness of the (renormalized) cubic NLS on $\T^2$ with random initial data \eqref{ini} with $\al = 0$. See also \cite{Bour97, CO, DNY2, DNY3, FOSW} for more results on almost sure local well-posedness of NLS with various types of nonlinearities on periodic domains with random initial data of the form \eqref{ini}. The almost sure local well-posedness results of NLS with a quadratic nonlinearity, to the best of the author's knowledge, have not been explored yet. In this paper, we choose to work with the quadratic NLS \eqref{qNLS} (see Remark \ref{RMK:zero} below for the necessity of removing the mean of the nonlinearity). Note that the initial data $u_0^\o$ almost surely belongs to $H^{-\al - \eps} (\T^2) \setminus H^{- \al} (\T^2)$ for any $\eps > 0$. See Lemma B.1 in \cite{BT08}. When $\al < 0$, the initial data $u_0^\o$ almost surely belongs to $H^s (\T^2)$ for sufficiently small $s = s(\al) > 0$, so that we can easily prove almost sure local well-posedness of \eqref{qNLS} by using the $L^3$-Strichartz estimate mentioned above. Our goal in this paper is to (i) obtain probabilistic local well-posedness of \eqref{qNLS} with $\al \geq 0$ and (ii) identify bad behaviors of \eqref{qNLS} when $\al$ gets too large. Specifically, we show that \eqref{qNLS} is almost surely locally well-posed when $0 \leq \al < \frac 12$ (see Subsection \ref{SUBSEC:well} below) and is probabilistically ill-posed in a suitable sense when $\al \geq \frac 34$ (see Subsection \ref{SUBSEC:ill} below).

\subsection{Almost sure local well-posedness of the quadratic NLS}
\label{SUBSEC:well}

In this subsection, we state our almost sure local well-posedness theorem for the quadratic NLS \eqref{qNLS} and describe our strategy for proving our result. We define 
\begin{align}
z(t) := z^\o (t) = e^{it \Dl} u_0^\o = \sum_{n \in \Z^2} \frac{g_n (\o)}{\jb{n}^{1 - \al}} e^{-it |n|^2 + in \cdot x}
\label{defz}
\end{align}

\noi
as the solution to the linear Schr\"odinger equation with the random initial data $u_0^\o$:
\begin{align*}
\begin{cases}
i \dt z + \Dl z = 0 \\
z|_{t = 0} = u_0^\o.
\end{cases}
\end{align*}

\noi
The precise statement of our almost sure local well-posedness result reads as follows.

\begin{theorem}
\label{THM:LWP}
Let $0 \leq \al < \frac 12$ and $s > 0$. Then, the quadratic NLS \eqref{qNLS} is almost surely locally well-posed in the class $z + C([-T, T]; H^s (\T^2))$. More precisely, there exist $T_0 > 0$ and constants $C, c, \ta > 0$ such that for all $0 < T \leq T_0$, there exists a set $\O_T \subset \O$ with the following properties:
\begin{itemize}
\item[(1)] $P(\O \setminus \O_T) \leq C \exp(- \frac{c}{T^\ta})$.

\item[(2)] For each $\o \in \O_T$, there exists a unique solution $u = u^\o$ to \eqref{qNLS} on $[-T, T]$ with $u|_{t = 0} = u_0^\o$ in the class $z + C([-T, T]; H^s (\T^2))$.
\end{itemize}
\end{theorem}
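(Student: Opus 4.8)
The standard approach here is the Bourgain–McKean "first-order expansion" (Da Prato–Debussche trick): we look for a solution of the form $u = z + v$, where $z$ is the random linear evolution in~\eqref{defz} and $v$ is the nonlinear remainder, which we expect to be smoother (in $C([-T,T];H^s)$ with $s>0$) than $z$ itself. Substituting $u = z+v$ into~\eqref{qNLS}, the equation for $v$ becomes
\[
i\dt v + \Dl v = |z+v|^2 - \fint |z+v|^2 = \wick{|z|^2} + 2\Re(z\bar v) + |v|^2 - \fint(\cdots),
\]
where the worst term is the purely random quadratic term $\wick{|z|^2} = |z|^2 - \fint|z|^2$ — but note that because we have subtracted the spatial mean, the troublesome divergent zeroth Fourier mode identified in Remark~\ref{RMK:zero}/the commented computation is precisely killed, so this term is a well-defined space-time distribution. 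The plan is to set up a contraction mapping in a Bourgain $X^{s,b}$ space (restricted to the time interval $[-T,T]$) for $v$, with the Duhamel operator $v \mapsto -i\int_0^t e^{i(t-t')\Dl}\big(\text{nonlinearity}\big)\,dt'$.

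First I would record the probabilistic input: one shows that $z \in X^{-\al-\eps, b}_T$ almost surely (in fact with Gaussian tails in $T$, giving property (1) via a standard Borel–Cantelli / large-deviation estimate on the relevant random multilinear quantities), and more importantly that the second Picard iterate
\[
z^{(2)}(t) = -i\int_0^t e^{i(t-t')\Dl}\Big(|z(t')|^2 - \fint |z(t')|^2\Big)\,dt'
\]
lies almost surely in $X^{s,b}_T$ for some $s>0$ when $\al<\tfrac12$; this is the key deterministic–probabilistic bilinear estimate and is where the threshold $\al<\tfrac12$ enters. Concretely one expands $|z|^2$ in Fourier modes, uses the Gaussian hypercontractivity / Wiener chaos bounds to control the random coefficients, and then runs a counting argument on the resolvent $\jb{n}^2 - |n_1|^2 + |n_2|^2$ together with the $L^4$-Strichartz estimate~\eqref{L4d} (Lemma~\ref{LEM:L4}); the gain of $\tfrac12 - \al$ derivatives over the regularity of $z$ comes from the dispersive smoothing in this bilinear interaction, and it degenerates exactly at $\al = \tfrac12$. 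After that, the remaining terms — the "resonant/random-times-deterministic" term $2\Re(z\bar v)$ and the purely deterministic $|v|^2$ — are handled by deterministic bilinear $X^{s,b}$ estimates: $|v|^2$ by a standard $L^2$-based bilinear estimate (using $s>0$ and~\eqref{L4d}), and $z\bar v$ by a probabilistic bilinear estimate of the same flavor as the one for $z^{(2)}$ but with one random and one deterministic factor. One also needs to handle the mean-subtraction terms, which only affect the zeroth mode and are easily absorbed (or handled via a separate low-dimensional estimate / further change of variables on $\fint v$).

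With these estimates in hand, the contraction closes on a ball in $X^{s,b}_T$ for $T$ small depending on $\o$ through the random norms of $z$ and $z^{(2)}$, and choosing $\O_T$ to be the set where those random norms are $\les T^{-\ta'}$ gives the Gaussian-tail bound in~(1); uniqueness in the class $z + C([-T,T];H^s)$ follows from the contraction together with the standard $X^{s,b}_T \embeds C([-T,T];H^s)$ embedding and a continuity/uniqueness argument. The main obstacle is the bilinear estimate for $z^{(2)}$ (and its $z\bar v$ analogue): one must carefully combine the probabilistic decoupling of the random Fourier coefficients with a sharp count of lattice points constrained by the dispersive resonance function, and it is this step that both forces the restriction $\al<\tfrac12$ and requires the most delicate work — a naive application of Strichartz loses too much and one needs to exploit the oscillation $e^{-it'(\jb{n}^2-|n_1|^2+|n_2|^2)}$ (equivalently, the modulation weight in $X^{s,b}$) to recover the needed derivative gain.
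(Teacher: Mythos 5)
Your overall architecture is the same as the paper's: the first order expansion $u=z+v$, a contraction for $v$ in $X^{s,\frac12+\dl}_T$, with bilinear estimates for $|v|^2$, $v\cj z$, $z\cj v$, $|z|^2$, and an exceptional set of probability $\le C\exp(-c/T^\ta)$. The genuine gap is in your treatment of the mixed terms $z\cj v$ and $v\cj z$. You propose to handle them "by a probabilistic bilinear estimate of the same flavor as the one for $z^{(2)}$", i.e.\ Wiener chaos moment bounds plus lattice counting and $L^4$-Strichartz. That flavor of argument does not apply here: $v$ is the unknown of the fixed point and is measurable with respect to the same Gaussian data as $z$, so you cannot take expectations of the product $z\cj v$ as if it were a fixed element of a Wiener chaos. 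What is needed is a bound on the \emph{operator norm} of the random map $v\mapsto P_{\neq 0}\,\varphi_T\cdot\I_\chi(v\,\cj{\chi\cdot z})$ (and its conjugate version) on $X^{s,\frac12+\dl}$, uniformly over all $v$ in the ball, outside an exceptional set. A purely deterministic bilinear estimate cannot substitute for this when $0\le\al<\frac12$, since it would have to accept $z$ at regularity $-\al-\eps<0$, which is ruled out by the deterministic theory (ill-posedness of the quadratic NLS below $L^2$). This is exactly why the paper invokes the Deng--Nahmod--Yue random tensor estimates (Lemmas \ref{LEM:rten} and \ref{LEM:rten2}) combined with the deterministic base-tensor counting bounds of Lemma \ref{LEM:dten}, in Proposition \ref{PROP:zz}\,(i)--(ii); without some such random operator-norm input (or a higher order expansion) your contraction does not close for the mixed terms.

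A secondary but real issue is your remark that the mean-subtraction terms "only affect the zeroth mode and are easily absorbed." For the term $z\cj v$ this is not a cosmetic point: the estimate \eqref{zv} is proved only for $v$ of mean zero and with the projection $P_{\neq 0}$ on the output, and it is false otherwise (if $v$ is allowed a constant mode, the left-hand side essentially becomes $\|\chi\cdot z\|_{X^{s,\frac12+\dl}}=\infty$ a.s.; see Remark \ref{RMK:zv}, and Remark \ref{RMK:ten} for the corresponding restrictions $n\neq 0$, $n_2\neq 0$ in the counting estimates). In the paper this is rescued by observing that the Duhamel formulation \eqref{Duh} with the mean-subtracted nonlinearity automatically produces a mean-zero $v$, so the contraction can be run in the mean-zero class; your write-up should make this structural point explicit rather than treating the zero mode as negligible.
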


We prove Theorem \ref{THM:LWP} by using the following first order expansion \cite{McKean, Bour96, DPD}:
\begin{align}
u = z + v,
\label{exp}
\end{align}

\noi
where the residual term $v$ satisfies the following equation:
\begin{align}
\begin{cases}
i \dt v + \Dl v = |z + v|^2 - \fint |z + v|^2 \\
v|_{t = 0} = 0.
\end{cases}
\label{vNLS}
\end{align}

\noi
The uniqueness statement in Theorem \ref{THM:LWP} refers to the uniqueness of $v$ as a solution to this perturbed quadratic NLS \eqref{vNLS} in an appropriate space (see the $X_T^{s, b}$-spaces in Subsection \ref{SUBSEC:Xsb}).

The well-posedness result of the perturbed quadratic NLS \eqref{vNLS} follows from the corresponding bilinear estimates to the quadratic terms $|v|^2$, $v \cj{z}$, $z \cj{v}$, and $|z|^2$. To prove these bilinear estimates, we use the operator norm approach based on the random tensor theory (see Subsection \ref{SUBSEC:rand}) developed by Deng, Nahmod, and Yue in \cite{DNY3}. See \cite{DNY3, Bri, Seo, OWZ, BDNY} for some applications of the random tensor theory in the study of well-posedness of random dispersive equations. We present the details of the corresponding bilinear estimates in Section \ref{SEC:bilin} and the proof of Theorem \ref{THM:LWP} in Section \ref{SEC:well}.

\begin{remark} \rm
\label{RMK:zero}

Let us consider the following quadratic NLS:
\begin{align}
\begin{cases}
i \dt u + \Dl u = |u|^2 \\
u|_{t = 0} = u_0^\o,
\end{cases}
\label{qNLSinit}
\end{align}

\noi
where $u_0^\o$ is the Gaussian random initial data as defined in \eqref{ini}. For $N \in \N$, we let $u_{0, N}^\o$ be the truncation of $u_0^\o$ as defined in \eqref{defz} to frequencies $\{ |n| \leq N \}$, and we define $z_N (t) := e^{it \Dl} u_{0, N}^\o$. One can easily check that the zeroth frequency of the following Picard second iterate
\begin{align*}
\int_0^t e^{i (t - t') \Dl} \big( |z_N (t')|^2 \big) dt'
\end{align*}

\noi
diverges almost surely when $\al \geq 0$ (see, for example, Subsection 4.4 in \cite{OO}). Thus, in order to make the almost sure local well-posedness problem non-trivial, we need to remove this singular behavior of \eqref{qNLSinit} occurring at the zeroth frequency.

A more natural way of dealing with the above issue is to introduce the following renormalized quadratic NLS:
\begin{align}
\begin{cases}
i \dt u_N + \Dl u_N = |u_N|^2 - \s_N \\
u_N|_{t = 0} = u_{0, N}^\o,
\end{cases}
\label{qNLSr}
\end{align}

\noi
where $\s_N = \E[|u_{0, N}^\o|^2]$. However, due to the lack of the conservation of mass $\int |u|^2$, there seems to be no easy way to establish the equivalence between \eqref{qNLSr} and the quadratic NLS \eqref{qNLS}. One can compare this situation with the cubic NLS on $\T^2$ in \cite{Bour96}, where Bourgain used a Gauge transform $u_N = \exp (2i (\fint |u_N|^2 - \s_N) t) \cdot v_N$ to show the equivalence between
\begin{align*}
i \dt u_N + \Dl u_N = |u_N|^2 u_N - 2 \s_N u_N
\end{align*}
and
\begin{align*}
i \dt v_N + \Dl v_N = \bigg( |v_N|^2 - 2 \fint |v_N|^2 \bigg) v_N.
\end{align*}

\noi
Here in the case of the cubic NLS, the quantity $\fint |u_N|^2 - \s_N$ is time invariant and one can easily recover $u_N$ from $v_N$ by noticing that $\fint |u_N|^2 = \fint |v_N|^2$. Nevertheless, similar transforms do not seem to apply to the case of the renormalized quadratic NLS \eqref{qNLSr}.

One of the problem with directly proceeding with \eqref{qNLSr} is that, by using the first order expansion $u_N = z_N + v_N$ with $z_N = e^{it \Dl} u_{0, N}^\o$ and letting $\I$ be the Duhamel operator, the zeroth frequencies of the bilinear terms $\I (v_N \cj{z_N})$, $\I (z_N \cj{v_N})$
, and $\I (z_N \cj{z_N})$ cannot be shown to converge when $\al \geq 0$ using our approach. Another problem is that the remainder term $v_N$ is not necessarily of mean zero, which causes a trouble in estimating the bilinear term $\I (z_N \cj{v_N})$ when $\al \geq 0$. See Proposition \ref{PROP:zz} and Remark \ref{RMK:zv} for more details. Therefore, in this paper, we choose to focus on the quadratic NLS \eqref{qNLS} (i.e. with nonlinearity $|u|^2 - \fint |u|^2$).

\end{remark}

\begin{remark} \rm
\label{RMK:mol}
Let $\eta \in C(\R^2; [0, 1])$ be a mollification kernel such that $\int \eta \, dx = 1$ and $\supp \eta \subset (-1, 1]^2 \simeq \T^2$. For $0 < \eps \leq 1$, we define $\eta_\eps (x) = \eps^{-2} \eta (\eps^{-1} x)$, so that $\{ \eta_\eps \}_{0 < \eps \leq 1}$ forms an approximate identity on $\T^2$. With a slight modification of the proof of Theorem \ref{THM:LWP}, we can show that when $\al < \frac 12$, the solution $u_\eps$ to
\begin{align*}
\begin{cases}
i \dt u_\eps + \Dl u_\eps = |u_\eps|^2 - \fint |u_\eps|^2 \\
u_\eps |_{t = 0} = \eta_\eps * u_0^\o
\end{cases}
\end{align*}

\noi
converges in probability to some (unique) limiting distribution $u$ in $C([-T_\o, T_\o]; H^{-\al -} (\T^2))$ with $T_\o > 0$ almost surely. Here, the limiting distribution $u$ is independent of the choice of the mollification kernel $\eta$.
\end{remark}

\begin{remark} \rm
Let us also consider probabilistic well-posedness of NLS with other quadratic nonlinearities:
\begin{align}
\begin{cases}
i \dt u + \Dl u = \mathcal{N} (u) \\
u|_{t = 0} = u_0^\o
\end{cases}
\label{qNLS2}
\end{align}

\noi
with $\mathcal{N} (u) = u^2$ or $\cj{u}^2$ and $u_0^\o$ as defined in \eqref{ini}. We first point out that these nonlinearities have different corresponding phase functions: $- 2 n \cdot n_2$ for $|u|^2$, $- 2 n_1 \cdot n_2$ for $u^2$, and $|n|^2 + |n_1|^2 + |n_2|^2$ for $\cj{u}^2$. Here, $n_1$ corresponds to the frequency of the first incoming wave, $n_2$ corresponds to the frequency of the second incoming wave, and $n$ corresponds to the frequency of the outgoing wave.

For $\mathcal{N} (u) = u^2$, we can use a similar argument as in the proof of Theorem \ref{THM:LWP} to obtain almost sure local well-posedness of \eqref{qNLS2} when $\al < \frac 12$. We point out that in this case, we do not need to remove any singularities as compared to the case of $\mathcal{N} (u) = |u|^2$.

For $\mathcal{N} (u) = \cj{u}^2$, due to the different nature of the corresponding phase function, we expect that one can go beyond the range $\al < \frac 12$ established for the almost sure local well-posedness for NLS with nonlinearities $|u|^2$ and $u^2$. However, the method for proving Theorem \ref{THM:LWP} based on the first order expansion is not enough for this purpose, since the corresponding bilinear estimate involving the product of two random linear solutions (Proposition \ref{PROP:zz} (iii)) is still only valid when $\al < \frac 12$. In this case, it may be possible to establish almost sure local well-posedness for some range of $\al \geq \frac 12$ using higher order expansions as in \cite{BOP3, OPTz} \footnote{This includes the paracontrolled approach used in \cite{GKO2, Bri, BDNY}}.
\end{remark}

\subsection{Probabilistic ill-posedness of the quadratic NLS}
\label{SUBSEC:ill}

In this subsection, we discuss probabilistic ill-posedness issues of the quadratic NLS \eqref{qNLS} for large values of $\al$. Given $N \in \N$, consider the following Picard second iterate:
\begin{align}
z_N^{(2)} (t) = \int_0^t e^{i (t - t') \Dl} \bigg( | z_N (t') |^2 - \fint |z_N (t')|^2 \bigg) \,dt',
\label{defzN2}
\end{align}

\noi
where 
\begin{align*}
z_N (t) = \sum_{\substack{n \in \Z^2 \\ |n| \leq N}} \frac{g_n (\o)}{\jb{n}^{1 - \al}} e^{-it |n|^2 + in \cdot x}
\end{align*}
is the truncation of the random linear solution $z$ as defined in \eqref{defz} to frequencies $\{ |n| \leq N \}$. We now state the following proposition regarding the non-convergence of every non-zero Fourier coefficient of the Picard second iterate $z_N^{(2)}$.

\begin{proposition}
\label{PROP:ill}
Let $n \neq 0$ and $t \neq 0$. For $\al \geq \frac 34$, the sequence $\big\{ \E \big[ | \F_x z_N^{(2)} (t, n) |^2 \big] \big\}_{N \in \N}$ goes to infinity as $N \to \infty$. Consequently, any subsequence of the sequence of random variables $\{ \F_x z_N^{(2)} (t, n) \}_{N \in \N}$ is not tight.
\end{proposition}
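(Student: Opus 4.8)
The plan is to compute $\E\big[|\F_x z_N^{(2)}(t,n)|^2\big]$ explicitly and extract its leading-order growth in $N$. First I would write out the Fourier transform of the Picard second iterate. For $n\neq 0$ the mean-subtraction $\fint|z_N|^2$ drops out, so
\begin{align*}
\F_x z_N^{(2)}(t,n) = \sum_{\substack{n_1 + n_2 = n \\ |n_1|, |n_2| \leq N}} \frac{g_{n_1}(\o) \cj{g_{n_2}(\o)}}{\jb{n_1}^{1-\al} \jb{n_2}^{1-\al}} \int_0^t e^{-i(t-t')|n|^2} e^{-it'|n_1|^2} e^{it'|n_2|^2} \, dt',
\end{align*}
where the time integral produces a factor $e^{-it|n|^2}\cdot\frac{e^{it'(\,|n|^2 - |n_1|^2 + |n_2|^2)}-1}{i(|n|^2 - |n_1|^2 + |n_2|^2)}$ evaluated at $t'=t$, i.e. a bounded-but-nonvanishing coefficient $m_t(n,n_1,n_2)$ that, using $n_1+n_2=n$, depends on the resonance function $|n|^2 - |n_1|^2 + |n_2|^2 = 2n_2\cdot n = 2n_2\cdot(n_1+n_2)$.

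Next I would take the second moment. Because the $g_n$ are independent mean-zero complex Gaussians, $\E[g_{n_1}\cj{g_{n_2}} \cj{g_{m_1}} g_{m_2}]$ is nonzero only for the pairings $\{n_1 = m_1, n_2 = m_2\}$ and (the degenerate case) $n_1 = n_2$ pairing — but since $n_1 + n_2 = n \neq 0$ the case $n_1 = n_2$ is excluded, and since $g$'s are complex the "$n_1 = m_2$" pairing does not contribute. Hence
\begin{align*}
\E\big[|\F_x z_N^{(2)}(t,n)|^2\big] = \sum_{\substack{n_1 + n_2 = n \\ |n_1|, |n_2| \leq N}} \frac{|m_t(n,n_1,n_2)|^2}{\jb{n_1}^{2-2\al} \jb{n_2}^{2-2\al}}.
\end{align*}
The task is then to show this diverges as $N\to\infty$ when $\al\geq\frac34$. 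The main point is a lower bound: restrict the sum to a region where the phase $\phi = 2n_2\cdot n$ is comparable to a fixed constant (say $|\phi|\lesssim 1$, e.g. $n_2$ nearly orthogonal to $n$), so that $|m_t(n,n_1,n_2)|\gtrsim |t|$ is bounded below. On that region $\jb{n_1}\sim\jb{n_2}\sim\jb{n_2}$ and the number of lattice points $n_2$ with $|n_2|\sim R$ and $|n_2\cdot n|\lesssim 1$ is $\gtrsim R/\jb{n}$ (a strip of width $\sim 1/|n|$ intersected with an annulus of radius $R$), so the dyadic piece at scale $|n_2|\sim R$ contributes $\gtrsim R^{-1}\cdot R \cdot R^{-(4-4\al)} = R^{4\al - 4}$; wait — more carefully, summing $\jb{n_2}^{-(4-4\al)}$ over $\sim R/\jb n$ points near radius $R$ gives $\gtrsim \jb n^{-1} R \cdot R^{4\al-4} = \jb n^{-1} R^{4\al - 3}$. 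Summing over dyadic $R \le N$ diverges precisely when $4\al - 3 \geq 0$, i.e. $\al \geq \frac34$, which is the stated threshold. I would also double-check the borderline $\al = \frac34$ by noting the dyadic sum becomes $\sum_{R\,\text{dyadic} \le N} 1 \sim \log N \to\infty$.

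For the tightness conclusion: if some subsequence $\{\F_x z_{N_k}^{(2)}(t,n)\}_k$ were tight, then by Prokhorov's theorem a further subsequence would converge in distribution to some random variable $X$, and since each $\F_x z_{N_k}^{(2)}(t,n)$ is a (finite) linear-plus-quadratic Gaussian chaos of order $\le 2$ — in particular lies in a fixed Wiener chaos of bounded order — the convergence in law upgrades to convergence of all moments (hypercontractivity / equivalence of $L^p$ norms on fixed-order chaoses, together with uniform integrability obtained from a uniform second-moment bound, which would be forced by tightness via... ) — actually the cleanest route is: tightness of a subsequence implies $\sup_k \E[|\F_x z_{N_k}^{(2)}(t,n)|^2] < \infty$ is \emph{not} automatic, so instead I would argue directly: if the subsequence were tight, then $\sup_k P(|\F_x z_{N_k}^{(2)}(t,n)| > \ld) \to 0$ as $\ld\to\infty$ uniformly in $k$; but by the Paley–Zygmund inequality applied to the nonnegative random variable $|\F_x z_{N_k}^{(2)}(t,n)|^2$, which is in the second Wiener chaos and hence satisfies $\E[|\cdot|^4] \lesssim (\E[|\cdot|^2])^2$ by hypercontractivity, we get $P(|\F_x z_{N_k}^{(2)}(t,n)|^2 \geq \tfrac12 \E[|\F_x z_{N_k}^{(2)}(t,n)|^2]) \gtrsim 1$ uniformly in $k$, forcing $P(|\F_x z_{N_k}^{(2)}(t,n)| > \ld)\not\to 0$ once the second moments blow up — contradiction. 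So no subsequence is tight.

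The main obstacle I anticipate is the lattice-point lower bound: making rigorous that the resonant (or near-resonant, $|2n_2\cdot n|\lesssim 1$) region contains enough lattice points at each dyadic scale, with the phase bounded below so $|m_t|\gtrsim|t|$. One has to be a little careful that $n$ is fixed (so constants may depend on $n$ and $t$, which is fine for the stated pointwise-in-$(t,n)$ claim) and that at the borderline $\al=\frac34$ the logarithmic divergence genuinely survives. The second-moment computation itself and the Gaussian-pairing bookkeeping are routine, and the non-tightness deduction via hypercontractivity + Paley–Zygmund is standard for fixed-order Wiener chaos.
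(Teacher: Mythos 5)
Your proposal is correct and follows essentially the same route as the paper: compute $\E\big[|\F_x z_N^{(2)}(t,n)|^2\big]$ explicitly via Gaussian pairing, bound it below by restricting to the (near-)resonant set $n\cdot n_2\approx 0$, count lattice points on that line to get divergence precisely for $\al\geq \frac 34$ (with a logarithmic divergence at the endpoint), and deduce non-tightness from Paley--Zygmund combined with the order-$\leq 2$ Wiener chaos moment bound. Only trivial bookkeeping differs: the convolution constraint should read $n_1 - n_2 = n$ (so the excluded self-pairing is exactly the diagonal forbidden by $n\neq 0$), and on the near-resonant strip the bound $|m_t|\gtrsim |t|$ requires the strip width to be chosen depending on $t$ --- or one may simply restrict to the exact resonance $n\cdot n_2 = 0$, as the paper does.
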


See Section \ref{SEC:ill} for the proof of Proposition \ref{PROP:ill}.

Proposition \ref{PROP:ill} implies that when $\al \geq \frac 34$, for every $n \neq 0$, any subsequence of $\{ \F_x z_N^{(2)} (t, n) \}_{N \in \N}$ does not converge in law. This in particular implies that standard methods for establishing almost sure local well-posedness such as the first order expansion \cite{McKean, Bour96, DPD} or its higher order variants \cite{BOP3, OPTz, GKO2, Bri, BDNY} do not work for $\al \geq \frac 34$.

Bearing in mind the above discussion, we now briefly discuss the probabilistic scaling and the associated critical regularity introduced by Deng, Nahmod, and Yue in \cite{DNY2}. The  notion of this probabilistic scaling is based on the observation that, if one wants to obtain local well-posedness, the Picard second iterate should not be rougher than the random linear solution. In \cite{DNY2}, Deng, Nahmod, and Yue provided heuristics for one to compute the probabilistic scaling critical regularity without too much difficulty, and they conjectured in the paper that for NLS with nonlinearities $|u|^{p-1} u$ ($p \in 2\N + 1$), almost sure local well-posedness should hold for all subcritical regularities. Indeed, in \cite{DNY3}, Deng, Nahmod, and Yue proved almost sure local well-posedness for NLS with nonlinearity $|u|^{p-1} u$ ($p \in 2\N + 1$) on $\T^d$ ($d \in \N$) in the full subcritical range relative to the probabilistic scaling. We point out that for NLS with the quadratic nonlinearity $|u|^2$, however, the probabilistic scaling does not seem to provide a useful prediction for probabilistic well-posedness issues, as we shall see in the following.

Let us compute the probabilistic scaling critical regularity for the quadratic NLS with nonlinearity $|u|^2$. Let $u_0^\o$ be the random initial data as defined in \eqref{ini}. Let $N \in \N$ be a dyadic number and consider the initial data $u_0^\o$ supported on frequencies $\{ |n| \sim N \}$:
\begin{align*}
P_N u_0^\o = \sum_{\substack{n \in \Z^2 \\ |n| \sim N}} \frac{g_n (\o)}{\jb{n}^{1 - \al}} e^{i n \cdot x}.
\end{align*}

\noi
Note that $\| P_N u_0^\o \|_{H^{- \al} (\T^2)} \sim 1$. Consider the following Picard second iterate term:\footnote{Here, we do not need to subtract the zeroth frequency of the nonlinearity since later on we only focus on the case when $|n| \sim N$.}
\begin{align*}
u_N^{(2)} (t) = \int_0^t e^{i(t - t') \Dl} \big( | e^{i t' \Dl} P_N u_0^\o |^2 \big) \, dt',
\end{align*}

\noi
whose $n$th Fourier coefficient can be computed as
\begin{align*}
\F_x u_N^{(2)} (t, n) = \int_0^t e^{-i t |n|^2} \sum_{\substack{n_1, n_2 \in \Z^2 \\ n_1 - n_2 = n \\ |n_1| \sim N, |n_2| \sim N }} e^{it' (|n|^2 - |n_1|^2 + |n_2|^2)} \frac{g_{n_1} (\o) \cj{g_{n_2}} (\o)}{ \jb{n_1}^{1 - \al} \jb{n_2}^{1 - \al} } \,dt'.
\end{align*}

\noi
We restrict our attention to the frequency range $\{ |n| \sim N \}$ of $u_N^{(2)} (t)$. Thus, by the Wiener chaos estimate (Lemma \ref{LEM:wie} below along with Chebyshev's inequality) and a counting estimate (see Lemma \ref{LEM:count1} (i) below), we can estimate the $H^{- \al} (\T^2)$-norm of $u_N^{(2)} (t)$ as follows:
\begin{align}
\| u_N^{(2)} (t) \|_{H^{- \al} (\T^2)}^2  &\les_t \sum_{\substack{n \in \Z^2 \\ |n| \sim N}} \jb{n}^{-2 \al} \bigg( \sum_{\substack{n_1, n_2 \in \Z^2 \\ n_1 - n_2 = n \\ |n_1| \sim N, |n_2| \sim N}} \frac{ g_{n_1} (\o) \cj{g_{n_2}} (\o)} {\jb{|n|^2 - |n_1|^2 + |n_2|^2} \jb{n_1}^{1 - \al} \jb{n_2}^{1 - \al}} \bigg)^2  \nonumber \\
&\les C_\o \sum_{\substack{n, n_1, n_2 \in \Z^2 \nonumber \\ n_1 - n_2 = n \\ |n|, |n_1|, |n_2| \sim N}} \frac{\jb{n}^{-2 \al}}{ \jb{|n|^2 - |n_1|^2 + |n_2|^2}^2 \jb{n_1}^{2 - 2 \al} \jb{n_2}^{2 - 2 \al} } \nonumber \\
&\sim C_\o N^{2 \al - 4}  \sum_{\substack{n, n_1, n_2 \in \Z^2 \\ n_1 - n_2 = n \\ |n|, |n_1|, |n_2| \sim N}} \frac{1}{\jb{|n|^2 - |n_1|^2 + |n_2|^2}^2} \nonumber \\
&\les C_\o N^{2 \al - 2 + \eps}
\label{scale}
\end{align}

\noi
for some $0 < C_\o < \infty$ almost surely and $\eps > 0$ arbitrarily small. In order to have $\| u_N^{(2)} \|_{H^{- \al} (\T^2)} \les 1$, we need $2 \al - 2 + \eps \leq 0$, which is equivalent to $\al < 1$.

The above computation shows that the probabilistic scaling critical regularity is $\al_* = 1$. Proposition \ref{PROP:ill}, however, shows that every non-zero Fourier coefficient of the Picard second iterate $z_N^{(2)} (t)$ diverges when $\al \geq \frac 34$ and $t \neq 0$, which happens before $\al$ reaches the critical value $\al_* = 1$. This shows that the probabilistic scaling introduced in \cite{DNY2} fails in the case of the quadratic nonlinearity $|u|^2$. We point out that this discrepancy is mainly due to the fact that the probabilistic scaling only considers the special case when all frequencies have comparable sizes, which oversimplifies the situation in the context of a quadratic nonlinearity. Also, this discrepancy is closely related to the fact that we are considering \textit{very} rough random initial data (rougher than the Gaussian free field initial data), which is in particular relevant in studying NLS with a polynomial nonlinearity of low degree and in low dimensions. See Remark \ref{RMK:ill_d} below. Similar phenomena also occur in the contexts of wave equations and stochastic parabolic equations. See Remark \ref{RMK:comp} and Remark \ref{RMK:noise} for further details.

We finish this subsection by stating several remarks.

\begin{remark} \rm
\label{RMK:rand}
We would like to point out that there is a gap ($\frac 12 \leq \al < \frac 34$) between our almost sure local well-posedness and probabilistic ill-posedness of the quadratic NLS \eqref{qNLS}. We would like to address this issue in a forthcoming work.

If some well-posedness results of the quadratic NLS \eqref{qNLS} can be achieved in the range $\frac 12 \leq \al < \frac 34$, this will imply that NLS behaves better than the nonlinear wave equation (NLW) in the quadratic case, which will be interesting because usually NLW behaves at least as well as NLS. See Remark \ref{RMK:comp} below or \cite{OO} for well-posedness issues of NLW with a quadratic nonlinearity. 
\end{remark}

\begin{remark} \rm
\label{RMK:ill_d}
The proof of Proposition \ref{PROP:ill}, the probabilistic ill-poseness result of the quadratic NLS \eqref{qNLS}, can easily be adapted to general dimensions. Specifically, on $\T^d$ for $d \in \N$, when $\al \geq \frac 54 - \frac{d}{4}$ and $n \neq 0$, any subsequence of $\{ \F_x z_N^{(2)} (t, n) \}_{N \in \N}$ is not tight. 

The probabilistic scaling for the quadratic NLS with nonlinearity $|u|^2$ can also be easily computed on general $\T^d$, on which the probabilistic scaling critical regularity is $\al_* = 2 - \frac{d}{2}$. We note that when $d = 1, 2$, every non-zero Fourier coefficient of the Picard second iterate diverges before $\al$ reaches the critical value $\al_*$.
\end{remark}

\begin{remark} \rm
We can also address ill-posedness issues of the quadratic NLS with nonlinearity $u^2$ or $\cj{u}^2$ with random initial data \eqref{ini} using a similar computation as in the case of $|u|^2$. Specifically, on $\T^d$, with either nonlinearity $u^2$ or nonlinearity $\cj{u}^2$, every Fourier coefficient of the Picard second iterate diverges (in the same sense of that in Proposition \ref{PROP:ill}) when $\al \geq 2 - \frac{d}{4}$. The reason for this different range of $\al$ from that in the context of nonlinearity $|u|^2$ is mainly due to the different phase functions corresponding to these nonlinearities (i.e. $-2 n \cdot n_2$ for $|u|^2$, $-2 n_1 \cdot n_2$ for $u^2$, and $|n|^2 + |n_1|^2 + |n_2|^2$ for $\cj{u}^2$).

We can also use a similar procedure as in \eqref{scale} to compute the probabilistic scaling for the quadratic NLS with nonlinearity $u^2$ or $\cj{u}^2$, each of which has the same critical regularity $\al_* = 2 - \frac{d}{2}$. It is interesting to note that in the context of nonlinearity $u^2$ or $\cj{u}^2$, the non-convergence of the Picard second iterate does not happen before $\al$ reaches the critical regularity.
\end{remark}

\begin{remark} \rm
\label{RMK:comp}
In \cite{OO}, Oh and Okamoto studied well-posedness issues of the stochastic nonlinear wave equation (NLW) with a quadratic nonlinearity on $\T^2$. Let us compare the situations for the quadratic NLS \eqref{qNLS} and the following quadratic NLW on $\T^2$:
\begin{align}
\begin{cases}
\dt^2 u + (1 - \Dl) u = u^2 \\
(u, \dt u)|_{t = 0} = (u_0^\o, u_1^\o),
\end{cases}
\label{NLW}
\end{align}

\noi
where
\begin{align*}
(u_0^\o, u_1^\o) = \bigg( \sum_{n \in \Z^2} \frac{ g_{0, n} (\o) }{\jb{n}} e^{in \cdot x}, \sum_{n \in \Z^2} \jb{n}^\al g_{1, n} (\o)  e^{in \cdot x} \bigg).
\end{align*}

\noi
Here, $\al \in \R$ and $\{ g_{0, n}, g_{1, n} \}_{n \in \Z^2}$ is a sequence of independent standard complex Gaussian random variables conditioned that $g_{j, -n} = \cj{g_{j, n}}$, for all $n \in \Z^2$, $j = 0, 1$. We point out that the probabilistic well-posedness and ill-posedness results in \cite{OO} for the quadratic SNLW also apply to \eqref{NLW} (with the standard Wick renormalization): \eqref{NLW} is almost surely locally well-posed when $\al < \frac 12$ and is probabilistically ill-posed in the sense that every Fourier coefficient of the Picard second iterate diverges almost surely when $\al \geq \frac 12$. 

We note that both the quadratic NLS \eqref{qNLS} and the quadratic NLW \eqref{NLW} are almost surely locally well-posed when $\al < \frac 12$. Regarding the probabilistic ill-posedness, for the quadratic NLS \eqref{qNLS}, every non-zero frequency diverges when $\al \geq \frac 34$; whereas for the quadratic NLW \eqref{NLW}, every frequency of the Picard second iterate diverges when $\al \geq \frac 12$, which also happens before reaching the critical regularity $\al_* = 1$ of \eqref{NLW}. See Proposition 1.6 in \cite{OO} for more details. The difference of the pathological behaviors of the two equations is mainly due to the different structures of the corresponding Duhamel operators.
\end{remark}

\begin{remark} \rm
\label{RMK:noise}
Let us also mention some failures of scaling analysis that happen in the context of parabolic equations forced by rough noises. In the past decade, there has been a huge progress in the study of stochastically forced parabolic equations using the theory of regularity structures introduced by Hairer \cite{Hair13, Hair14, Hair14s, Hair15}. In particular, the theory of regularity structures is able to solve a wide range of parabolic equations with a space-time white noise forcing that are subcritical according to the notion of local subcriticality introduced by Hairer \cite{Hair14}. However, when the stochastic forcing is rougher than the space-time white noise, the scaling analysis may fail to provide a prediction for well-posedness issues. For example, in \cite{Hosh}, Hoshino showed that for the KPZ equation driven by a fractional derivative of a space-time white noise, the standard solution theory breaks down before reaching the critical regularity. See also \cite{OO} for a similar phenomenon that occurs in the context of the stochastic nonlinear heat equation forced by a fractional derivative of a space-time white noise.
\end{remark}

\subsection{Organization of the paper}

This paper is organized as follows. In Section \ref{SEC:lem}, we introduce some notations, definitions, and preliminary lemmas. In Section \ref{SEC:bilin}, we establish bilinear estimates that are crucial for proving our almost sure local well-posedness result of the quadratic NLS \eqref{qNLS}. In Section \ref{SEC:well}, we prove Theorem \ref{THM:LWP}, the almost sure local well-posedness result of \eqref{qNLS} when $\al < \frac 12$. In Section \ref{SEC:ill}, we prove Proposition \ref{PROP:ill}, the probabilistic ill-posedness result of \eqref{qNLS} for $\al \geq \frac 34$.

\section{Notations and preliminary lemmas}
\label{SEC:lem}

In this section, we discuss some relevant notations and lemmas.

\subsection{Notations}

For a space-time distribution $u$ defined on $\R \times \T^2$, we write $\F_x u$ to denote the space Fourier transform of $u$ and we write $\ft u$ to denote the space-time Fourier transform of $u$. We also define the following twisted space-time Fourier transform:
\[ \wt u (\tau, k) = \ft u (\tau - |k|^2, k). \]

Given a dyadic number $N \in 2^{\Z_{\geq 0}}$, we let $P_N$ be the frequency projector onto the spatial frequencies $\{ n \in \Z^2: \frac{N}{2} < \jb{n} \leq N \}$, where $\jb{\cdot} = (1 + |\cdot|^2)^{\frac 12}$. For any subset $Q \subset \Z^2$, we let $P_Q$ be the frequency projector onto $Q$. Also, we use $P_{\neq 0}$ to denote the restriction to non-zero frequencies.


Let $\chi$ be a smooth cut-off function such that $\chi \equiv 1$ on $[-1, 1]$ and $\chi \equiv 0$ outside of $[-2, 2]$.

We use $A \les B$ to denote $A \leq CB$ for some constant $C > 0$, and we write $A \sim B$ if $A \les B$ and $B \les A$. Also, we write $A \ll B$ if $A \leq cB$ for some sufficiently small $c > 0$. In addition, we use $a+$ and $a-$ to denote $a + \eps$ and $a - \eps$, respectively, for sufficiently small $\eps > 0$.

\subsection{Fourier restriction norm method}
\label{SUBSEC:Xsb}

In this subsection, we introduce definitions and lemmas of $X^{s,b}$-spaces, also called the Bourgain spaces, due to Klainerman-Machedon \cite{KM93} and Bourgain \cite{Bour93}. Given $s, b \in \R$, we define the $X^{s, b} = X^{s,b}(\R \times \T^2)$ norm as
\begin{align*}
\| u \|_{X^{s,b}} := \big\| \jb{n}^s \jb{ \tau + |n|^2 }^b \ft u (\tau, n) \big\|_{L_\tau^2 \l_n^2 (\R \times \Z^2)}.
\end{align*} 

\noi
The space $X^{s,b}$ is then defined by the completion of functions that are $C^\infty$ in space and Schwartz in time with respect to this norm. For $T > 0$, we define the space $X_T^{s, b}$ by the restriction of distributions in $X^{s,b}$ onto the time interval $[-T, T]$ via the norm
\[ \| u \|_{X_T^{s,b}} := \inf \big\{ \| v \|_{X^{s, b}(\R \times \T^2)} : v|_{[-T, T]} = u \big\}.  \]

\noi
For any $s \in \R$ and $b > \frac 12$, we have $X_T^{s, b} \subset C([-T, T]; H^s(\T^2))$, where $H^s (\T^2)$ is the $L^2$-based Sobolev space on $\T^2$ with regularity $s$.

We define the truncated Duhamel operator as
\begin{align}
\I_\chi F(t) = \chi(t) \int_0^t \chi(t') e^{i(t - t') \Dl} F(t') \, dt'.
\label{defI}
\end{align}
We first recall the following linear estimates. See \cite{Bour93, GTV, Tao}.
\begin{lemma}
\label{LEM:lin}
Let $s \in \R$ and $b > \frac 12$. Then, we have
\[ \| \I_\chi F \|_{X^{s, b}} \les_{b}  \| F \|_{X^{s, b - 1}}. \]
\end{lemma}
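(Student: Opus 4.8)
The plan is to reduce the estimate to the well-known linear estimates for the Duhamel operator in $X^{s,b}$-spaces, following the standard approach of Bourgain \cite{Bour93} and Ginibre–Tsutsumi–Velo \cite{GTV} (see also \cite{Tao}). Since the spatial variable plays no role beyond carrying the weight $\jb{n}^s$ and the Schr\"odinger group $e^{it\Dl}$ is a Fourier multiplier, we first pass to the twisted Fourier side: writing $\ft u(\tau,n)$ and using that $\|u\|_{X^{s,b}} = \|\jb{n}^s \jb{\tau + |n|^2}^b \ft u(\tau,n)\|_{L^2_\tau \l^2_n}$, the operator $\I_\chi$ decouples over the frequencies $n$ into a one-dimensional-in-time question. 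Concretely, for fixed $n$ one has $\F_x(\I_\chi F)(t,n) = \chi(t)\int_0^t \chi(t') e^{-i(t-t')|n|^2} \F_x F(t',n)\,dt'$, and after conjugating by $e^{it|n|^2}$ the operator becomes $\chi(t)\int_0^t \chi(t') \big(e^{it'|n|^2}\F_x F(t',n)\big)\,dt'$, so that the weighted norms $\jb{\tau + |n|^2}^{b}$ and $\jb{\tau+|n|^2}^{b-1}$ on $u$ and $F$ turn into ordinary Sobolev-in-time weights $\jb{\tau}^b$, $\jb{\tau}^{b-1}$ acting on the (Fourier transform in $\tau$ of the) functions $t\mapsto e^{it|n|^2}\F_x F(t,n)$.

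Next, I would invoke the scalar time estimate: for $b>\frac12$ there is a constant $C_b$ such that for every $g\in\mathcal{S}(\R)$,
\begin{align*}
\Big\| \chi(t)\int_0^t \chi(t') g(t')\,dt' \Big\|_{H^b_t(\R)} \les_b \| g \|_{H^{b-1}_t(\R)}.
\end{align*}
This is the heart of the matter and is proved by splitting the Duhamel kernel via the identity $\int_0^t = \int_{\R} - \int_t^\infty$ (or equivalently using a smooth partition $\mathbf{1}_{[0,t]} = a(t,t') + b(t,t')$ adapted to the cutoffs), writing the resulting operators as Fourier multipliers in $t$, and using the elementary bound $\jb{\tau}^{b}\jb{\tau'}^{-1} \les \jb{\tau - \tau'}^{b} + \jb{\tau}^{b-1}$ together with the rapid decay of $\ft\chi$; the cutoff $\chi(t')$ inside the integral absorbs the non-integrable tail. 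Applying this with $g(t)=e^{it|n|^2}\F_x F(t,n)$, multiplying by $\jb{n}^s$, and taking the $\l^2_n$ norm (using that the constant $C_b$ is independent of $n$) yields $\|\I_\chi F\|_{X^{s,b}} \les_b \|F\|_{X^{s,b-1}}$, as claimed.

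The main obstacle is the scalar estimate with the correct pair of exponents $(b, b-1)$ and the handling of the two cutoff functions: one must be careful that the truncation $\chi(t)\int_0^t\chi(t')\,dt'$ — rather than the full-line convolution — does not spoil the gain of one derivative, and that the failure of $H^b_t$ to be an algebra for $b=\frac12$ is exactly why the strict inequality $b>\frac12$ is needed. Once this scalar lemma is in hand (it is by now standard, so I would cite \cite{Bour93, GTV, Tao} and only sketch the kernel decomposition), the passage back to $X^{s,b}$ is purely bookkeeping. I expect no difficulty from the spatial side, since summation in $n$ commutes with everything and the constants are uniform in $n$.
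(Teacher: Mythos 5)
Your proposal is correct and follows exactly the standard argument behind this lemma: the paper does not prove it but simply cites \cite{Bour93, GTV, Tao}, and those references proceed precisely as you do, reducing by conjugation with $e^{it|n|^2}$ (equivalently, the twisted transform $\wt u(\tau,n)=\ft u(\tau-|n|^2,n)$) to the scalar-in-time estimate $\|\chi(t)\int_0^t \chi(t')g(t')\,dt'\|_{H^b_t}\les_b\|g\|_{H^{b-1}_t}$ for $b>\frac12$, with the inner cutoff absorbed since multiplication by a Schwartz function is bounded on $H^{b-1}_t$ (or on $X^{s,b-1}$). No gaps; the only caveat is that your closing remark about the algebra property of $H^{1/2}$ is a heuristic rather than the actual reason $b>\frac12$ enters (it enters through the integrability of $\jb{\tau}^{-2b}$ when controlling the boundary/constant term of the Duhamel kernel).
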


Next, we recall the following $L^4$-Strichartz estimate. See \cite{Bour93, Bour95}.
\begin{lemma}
\label{LEM:L4}
Let $Q$ be a spatial frequency ball of radius $N$ (not necessarily centered at the origin). Then, we have
\[ \| P_Q u \|_{L_{t,x}^4 ([-1,1] \times \T^2)} \les N^{0+} \| u \|_{X^{0, \frac 12 -}}. \]
\end{lemma}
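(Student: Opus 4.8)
The plan is to prove the bound directly on the space-time Fourier side, reducing it to a lattice-point count on circles in $\Z^2$. First I would write $v = P_Q u$ and use the identity $\|v\|_{L^4_{t,x}}^2 = \|v^2\|_{L^2_{t,x}}$. Denoting by $\widehat v(\sigma,n)$ the space-time Fourier transform of $v$ and setting $F(\sigma,n) = \jb{\sigma+|n|^2}^{b}\widehat v(\sigma,n)$ (so that $\|F\|_{L^2_\sigma \l^2_n} = \|v\|_{X^{0,b}}$, where $b = \tfrac12-\eps$ for a small $\eps>0$ to be fixed later), one has
\[
\widehat{v^2}(\sigma,n) = \sum_{\substack{n_1+n_2=n\\ n_1,n_2\in Q}} \int_\R \frac{F(\sigma_1,n_1)}{\jb{\sigma_1+|n_1|^2}^{b}}\cdot\frac{F(\sigma-\sigma_1,n_2)}{\jb{\sigma-\sigma_1+|n_2|^2}^{b}}\,d\sigma_1.
\]
Applying Plancherel, dualizing against an $H$ with $\|H\|_{L^2_\sigma\l^2_n}=1$, and then applying the Cauchy--Schwarz inequality in the $(n_1,\sigma_1)$ summation (peeling off the product $F(\sigma_1,n_1)F(\sigma-\sigma_1,n_2)$, whose total contribution is $\|F\|_{L^2_\sigma\l^2_n}^2$, from the two weights) gives
\[
\|v^2\|_{L^2_{t,x}} \les \Big(\sup_{\sigma\in\R,\,n\in\Z^2}K(\sigma,n)\Big)^{1/2}\|v\|_{X^{0,b}}^2,\qquad K(\sigma,n):=\sum_{\substack{n_1+n_2=n\\ n_1,n_2\in Q}}\int_\R\frac{d\sigma_1}{\jb{\sigma_1+|n_1|^2}^{2b}\jb{\sigma-\sigma_1+|n_2|^2}^{2b}}.
\]
Hence everything reduces to showing $\sup_{\sigma,n}K(\sigma,n)\les_\eps N^{C\eps}$ for an absolute constant $C$.

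To bound $K$, I would first use the elementary one-dimensional convolution estimate (valid for $\tfrac14<b<\tfrac12$; see \cite{GTV, Tao}): for each fixed $n_1$, with $n_2 = n-n_1$,
\[
\int_\R\frac{d\sigma_1}{\jb{\sigma_1+|n_1|^2}^{2b}\jb{\sigma-\sigma_1+|n_2|^2}^{2b}} \les \jb{\sigma+|n_1|^2+|n_2|^2}^{1-4b},
\]
the two weights being, as functions of $\sigma_1$, centered a distance $|\sigma+|n_1|^2+|n_2|^2|$ apart. Writing $m(n_1):=|n_1|^2+|n-n_1|^2\in\Z_{\geq0}$ and grouping $n_1$ by the value of $m(n_1)$,
\[
K(\sigma,n)\les\sum_{k\geq0}\#\big\{n_1:\ n_1,\,n-n_1\in Q,\ m(n_1)=k\big\}\cdot\jb{\sigma+k}^{1-4b}.
\]
Now the arithmetic enters. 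Completing the square turns $m(n_1)=k$ into $|2n_1-n|^2=2k-|n|^2$, so the inner count is at most the number of lattice points of $\Z^2$ on a circle of integer radius-squared $2k-|n|^2$; since $n_1,n-n_1\in Q$ forces $|n_1-\tfrac n2|=\tfrac12|n_1-n_2|\leq N$ \emph{regardless of the center of} $Q$ (which is precisely why an arbitrary center is allowed), that radius-squared is $\les N^2$, and the divisor bound for the number of representations of an integer as a sum of two squares controls the count by $\les_\eps N^\eps$. The same constraint confines the values $k=m(n_1)$ to an interval $I$ of length $\les N^2$, so, using $-1<1-4b<0$,
\[
K(\sigma,n)\les_\eps N^\eps\sum_{\substack{k\in I\\ |I|\les N^2}}\jb{\sigma+k}^{1-4b}\les_\eps N^\eps\,(N^2)^{2-4b} = N^{\eps+(4-8b)}.
\]
For $b=\tfrac12-\eps$ the exponent $\eps+(4-8b)=9\eps$ is an arbitrarily small power of $N$ once $\eps$ is small. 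Since $\|P_Q u\|_{L^4}^2 = \|(P_Q u)^2\|_{L^2}$ and $\|P_Q u\|_{X^{0,b}}\leq\|u\|_{X^{0,b}}$, combining this with the reduction above gives $\|P_Q u\|_{L^4_{t,x}(\R\times\T^2)}\les N^{0+}\|u\|_{X^{0,\frac12-}}$, and restricting the $L^4$-norm from $\R\times\T^2$ to $[-1,1]\times\T^2$ only decreases it, so the claim follows.

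The crux --- and the step I expect to be the main obstacle --- is the lattice-point count: everything rests on the fact that the circle $\{w\in\Z^2:|w|^2=r\}$ carries at most $\les_\eps r^\eps$ points, applied here with $r\les N^2$. This is exactly where the $N^{0+}$ loss, and the restriction to two spatial dimensions, enter; the remaining ingredients --- the Fourier-side duality, the Cauchy--Schwarz inequality, and the one-dimensional convolution bound --- are routine Bourgain-space manipulations, and the passage from the global $X^{0,b}$ estimate to the time interval $[-1,1]$ is for free.
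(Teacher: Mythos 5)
Your proof is correct, and it is essentially the classical argument behind the cited estimate of Bourgain (the paper itself only refers to \cite{Bour93, Bour95}): write $\|P_Qu\|_{L^4}^2=\|(P_Qu)^2\|_{L^2}$, pass to the Fourier side, apply the Cauchy--Schwarz inequality and the one-dimensional convolution bound, and control the resulting level-set count by the divisor bound for lattice points on circles, with the observation $|n_1-n_2|\leq 2N$ correctly handling the arbitrary center of $Q$ and the choice $b=\tfrac12-\eps$ yielding the $N^{0+}$ loss. No gaps; the global-in-time reduction and the restriction to $[-1,1]$ are handled appropriately.
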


We also recall the following time localization estimate. For a proof, see Proposition 2.7 in \cite{DNY2}.
\begin{lemma}
\label{LEM:lin_t}
Let $\varphi$ be a Schwartz function, and let $\varphi_T (t) = \varphi(t/T)$ for $0 < T \leq 1$. Let $s \in \R$ and $\frac 12 < b \leq b_1 < 1$. Then, for any space-time function $u$ that satisfies $u(0, x) = 0$ for all $x \in \T^2$, we have
\[ \| \varphi_T \cdot u \|_{X^{s, b}} \les T^{b_1 - b} \| u \|_{X^{s, b_1}}. \]
\end{lemma}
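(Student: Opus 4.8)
The plan is to strip off the Schr\"odinger propagator, reducing the statement to a one-dimensional multiplication estimate in the time variable, which is then proved by hand from the Gagliardo characterization of fractional Sobolev spaces. Concretely, I would pass to the interaction representation $v(t,\cdot) := e^{-it\Dl}u(t,\cdot)$, so that $\F_x v(t,n) = e^{it|n|^2}\F_x u(t,n)$; after the change of variables $\tau\mapsto\tau+|n|^2$ on the space-time Fourier side one gets
\[
\|u\|_{X^{s,b}} = \big\|\,\jb{n}^s\,\big\|\F_x v(\cdot,n)\big\|_{H^b_t(\R)}\,\big\|_{\l^2_n},
\]
and the same identity with $(\varphi_T u,\varphi_T v)$ in place of $(u,v)$, since $\varphi_T$ is a scalar in $t$ and hence commutes with $e^{-it\Dl}$; moreover $u(0,\cdot)=0$ is equivalent to $\F_x v(0,n)=0$ for every $n$. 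Thus it suffices to prove the scalar estimate
\[
\|\varphi_T f\|_{H^b(\R)} \les_{\varphi,b,b_1} T^{b_1-b}\|f\|_{H^{b_1}(\R)}
\]
for all $f\colon\R\to\C$ with $f(0)=0$, uniformly in $0<T\leq 1$, with $\tfrac12<b\leq b_1<1$ — and then put back the weight $\jb{n}^s$ and take $\l^2_n$. Note that $b_1>\tfrac12$ gives $H^{b_1}(\R)\embeds C^{0,b_1-\frac12}(\R)$, which makes $f(0)$ meaningful and yields the pointwise bound $|f(t)|=|f(t)-f(0)|\les|t|^{b_1-\frac12}\|f\|_{H^{b_1}}$ that will be used repeatedly.

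For the scalar estimate I would use $\|g\|_{H^b}^2\sim\|g\|_{L^2}^2+\iint_{\R^2}|g(t)-g(\tau)|^2|t-\tau|^{-1-2b}\,dt\,d\tau$, valid for $0<b<1$. The $L^2$ part is easy: the pointwise bound on $f$ together with the rapid decay $|\varphi_T(t)|\les_N\jb{t/T}^{-N}$ gives $\|\varphi_T f\|_{L^2}\les T^{b_1}\|f\|_{H^{b_1}}\leq T^{b_1-b}\|f\|_{H^{b_1}}$. For the Gagliardo seminorm I would split $\varphi_T(t)f(t)-\varphi_T(\tau)f(\tau)=\varphi_T(t)\big(f(t)-f(\tau)\big)+f(\tau)\big(\varphi_T(t)-\varphi_T(\tau)\big)$. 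The contribution of the first term is routine: on $\{|t-\tau|\leq T\}$ one writes $|t-\tau|^{-1-2b}=|t-\tau|^{-1-2b_1}|t-\tau|^{2(b_1-b)}\leq T^{2(b_1-b)}|t-\tau|^{-1-2b_1}$ and bounds by $T^{2(b_1-b)}\|f\|_{H^{b_1}}^2$; on $\{|t-\tau|>T\}$ one integrates out one variable and reuses $|\varphi_T|\les 1$, the pointwise bound on $f$, and $\|\varphi_T f\|_{L^2}\les T^{b_1}\|f\|_{H^{b_1}}$, where the inequality $b_1-b<\tfrac12$ (automatic from $\tfrac12<b\leq b_1<1$) is exactly what absorbs the leftover powers of $T$.

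The main obstacle is the second term, in which all the regularity sits on $\varphi_T$, whose homogeneous energy $\|\varphi_T\|_{\dot H^b}^2\sim T^{1-2b}$ blows up as $T\to0$, so the required smallness has to come \emph{only} from the vanishing of $f$ at $t=0$. Here I would rescale $t=T\rho$, $\tau=T\sigma$ in the double integral of $f(\tau)^2|\varphi_T(t)-\varphi_T(\tau)|^2|t-\tau|^{-1-2b}$, obtaining
\[
T^{1-2b}\int_{\R}f(T\sigma)^2\,\Psi(\sigma)\,d\sigma,\qquad \Psi(\sigma):=\int_{\R}\frac{|\varphi(\rho)-\varphi(\sigma)|^2}{|\rho-\sigma|^{1+2b}}\,d\rho,
\]
where $0<b<1$ and $\varphi$ Schwartz make $\Psi$ a bounded function with decay $\les\jb{\sigma}^{-1-2b}$. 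The pointwise bound on $f$ (and the fact that $2b_1-1>0$, so it persists when $|T\sigma|>1$) gives $f(T\sigma)^2\les(T|\sigma|)^{2b_1-1}\|f\|_{H^{b_1}}^2$ for every $\sigma$, hence $\int f(T\sigma)^2\Psi(\sigma)\,d\sigma\les T^{2b_1-1}\|f\|_{H^{b_1}}^2$ — the $\sigma$-integral converging precisely because $b_1-b<\tfrac12$ — so the second term is $\les T^{1-2b}\cdot T^{2b_1-1}\|f\|_{H^{b_1}}^2=T^{2(b_1-b)}\|f\|_{H^{b_1}}^2$. Collecting the three contributions proves the scalar estimate, and the reduction above then finishes the proof. (One could instead work on the Fourier side, writing $\widehat{\varphi_T f}(\tau)=\int[\widehat{\varphi_T}(\tau-\sigma)-\widehat{\varphi_T}(\tau)]\widehat f(\sigma)\,d\sigma$ and using $\int\widehat f=f(0)=0$; but the Gagliardo route keeps the role of the vanishing hypothesis most transparent.)
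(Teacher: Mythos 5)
Your proof is correct, and it takes a genuinely different route from the one the paper relies on: the paper does not prove Lemma \ref{LEM:lin_t} at all, but quotes it from Proposition 2.7 of \cite{DNY2}, where (as in most treatments of this standard time-localization estimate) the vanishing at $t=0$ is exploited on the Fourier side in time --- essentially the alternative you sketch in your closing parenthesis, writing $\widehat{\varphi_T f}(\tau)=\int\big[\widehat{\varphi_T}(\tau-\lambda)-\widehat{\varphi_T}(\tau)\big]\widehat f(\lambda)\,d\lambda$ using $\int\widehat f\,d\lambda = c\, f(0)=0$ and estimating the resulting kernel. Your reduction via the interaction representation to the scalar bound $\|\varphi_T f\|_{H^b(\R)}\les T^{b_1-b}\|f\|_{H^{b_1}(\R)}$ for $f(0)=0$ is the same as in that standard route, but you then argue entirely on the physical side through the Gagliardo characterization of $H^b$ for $0<b<1$, the splitting $\varphi_T(t)f(t)-\varphi_T(\tau)f(\tau)=\varphi_T(t)\big(f(t)-f(\tau)\big)+f(\tau)\big(\varphi_T(t)-\varphi_T(\tau)\big)$, and the H\"older bound $|f(t)|\les |t|^{b_1-\frac12}\|f\|_{H^{b_1}}$. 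I checked the one step you label routine, namely the piece of the first term on $\{|t-\tau|>T\}$ in which $f(\tau)$ is not accompanied by $\varphi_T(\tau)$: after the pointwise bound and the rescaling $t=T\rho$, $\tau=T\sigma$, the exponents combine to $T^{2(b_1-b)}$ and the $\sigma$-integral converges precisely because $b_1-b<\frac12$, so there is no gap; likewise your claims that $\Psi$ is bounded with $\Psi(\sigma)\les\jb{\sigma}^{-1-2b}$ are correct for Schwartz $\varphi$ and $0<b<1$. What your argument buys is a self-contained, elementary proof in which the roles of the hypothesis $u(0,\cdot)=0$ and of the two constraints $b<1$ (Gagliardo equivalence) and $b_1-b<\frac12$ (convergence of the tails) are completely transparent; the Fourier-side argument of \cite{DNY2} is shorter once the relevant kernel/convolution bounds are in place, but hides those thresholds inside the convolution estimates.
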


Finally, we record the following lemma. For a proof, see Lemma 3.1 in \cite{DNY1}.
\begin{lemma}
\label{LEM:Duh}
For all $\tau \in \R$ and $n \in \Z^2$, we have the formula
\[ \wt{\I_\chi F} (\tau, n) = \int_\R  K (\tau, \tau') \wt F (\tau', n) \, d\tau', \]

\noi
where the kernel $K$ satisfies
\[ |K (\tau, \tau')| \les \bigg( \frac{1}{\jb{\tau}^3} + \frac{1}{\jb{\tau - \tau'}^3} \bigg) \frac{1}{\jb{\tau'}} \les \frac{1}{\jb{\tau} \jb{\tau - \tau'}}. \]
\end{lemma}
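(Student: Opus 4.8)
The plan is to first compute $\wt{\I_\chi F}$ explicitly, read off the kernel $K$, and then establish the pointwise bound on $K$ by integration by parts; one may assume $F$ lies in a dense class (say, Schwartz in time for each spatial mode) and pass to the general case afterwards. For fixed $n\in\Z^2$, I would expand $F$ along the twisted frequencies, writing $\F_x F(t',n)=\frac1{2\pi}\int_\R e^{it'(\tau'-|n|^2)}\wt F(\tau',n)\,d\tau'$, which is just the time inverse Fourier transform combined with the definition $\wt F(\tau',n)=\ft F(\tau'-|n|^2,n)$. Substituting into \eqref{defI} and using that $e^{i(t-t')\Dl}$ acts as $e^{-i(t-t')|n|^2}$ on the $n$-th mode, the factors $e^{\pm it'|n|^2}$ cancel and one gets
\[
\F_x[\I_\chi F](t,n)=\frac{\chi(t)e^{-it|n|^2}}{2\pi}\int_\R \wt F(\tau',n)\,\Phi_{\tau'}(t)\,d\tau',\qquad \Phi_{\tau'}(t):=\int_0^t\chi(t')e^{it'\tau'}\,dt'.
\]
Taking the time Fourier transform and evaluating at $\tau-|n|^2$, the modulation $e^{-it|n|^2}$ shifts the frequency back, the $|n|^2$ dependence disappears, and one obtains exactly $\wt{\I_\chi F}(\tau,n)=\int_\R K(\tau,\tau')\wt F(\tau',n)\,d\tau'$ with $K(\tau,\tau')=\frac1{2\pi}\F_t\big[\chi(\cdot)\Phi_{\tau'}(\cdot)\big](\tau)$. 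Since $\chi\Phi_{\tau'}$ is smooth and supported in $[-2,2]$, the map $\tau\mapsto K(\tau,\tau')$ is automatically rapidly decaying; what must be tracked is the joint dependence on $\tau'$ and $\tau-\tau'$.

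Next I would extract the $\jb{\tau'}^{-1}$ gain, which is only an issue for $|\tau'|\ge 2$. Integrating by parts repeatedly in $t'$ inside $\Phi_{\tau'}$, and using $\chi(0)=1$ together with $\chi^{(j)}(0)=0$ for all $j\ge1$ (valid since $\chi\equiv1$ near the origin) as well as $\supp\chi'\subset\{1\le|t'|\le2\}$, one obtains for every $m\ge1$ an identity of the schematic form
\[
\chi(t)\Phi_{\tau'}(t)=\frac{\chi(t)^2e^{it\tau'}-\chi(t)}{i\tau'}+\sum_{j=1}^{m}\frac{c_j}{(\tau')^{j+1}}\,\chi(t)\chi^{(j)}(t)e^{it\tau'}+\frac{c_m}{(\tau')^{m+1}}\,\chi(t)\!\int_0^t\chi^{(m+1)}(t')e^{it'\tau'}\,dt'
\]
with absolute constants $c_j$. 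Applying $\F_t$, the first group produces $(i\tau')^{-1}(\widehat{\chi^2}(\tau-\tau')-\widehat\chi(\tau))$ and terms $(\tau')^{-(j+1)}\widehat{\chi\chi^{(j)}}(\tau-\tau')$; since $\chi,\chi^2,\chi\chi^{(j)}\in\S(\R)$ these are all $\les\jb{\tau'}^{-1}\big(\jb{\tau}^{-3}+\jb{\tau-\tau'}^{-3}\big)$, which is the size we want.

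It then remains to control $\F_t$ of the remainder $R_{\tau',m}(t):=\chi(t)\int_0^t\chi^{(m+1)}(t')e^{it'\tau'}\,dt'$, which is smooth and supported in $[-2,2]$. Integrating by parts three times in $t$ in the Fourier integral (no boundary terms) gives $|\F_t[R_{\tau',m}](\tau)|\les\jb{\tau}^{-3}\sup_t|\partial_t^3R_{\tau',m}|$; by the Leibniz rule the first $t$-derivative hitting the oscillatory integral is free and each subsequent one costs a factor $\tau'$, so $\sup_t|\partial_t^3R_{\tau',m}|\les\jb{\tau'}^2$. Hence this term contributes $\les\jb{\tau'}^{-(m+1)}\jb{\tau'}^2\jb{\tau}^{-3}$ to $K$, and choosing $m=3$ makes it $\les\jb{\tau'}^{-1}\jb{\tau}^{-3}$, completing the bound $|K(\tau,\tau')|\les\jb{\tau'}^{-1}(\jb{\tau}^{-3}+\jb{\tau-\tau'}^{-3})$ for $|\tau'|\ge2$. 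For $|\tau'|\le2$, where $\jb{\tau'}\sim1$ and $\jb{\tau-\tau'}\sim\jb{\tau}$, one instead integrates by parts directly in $t$, using $\|\partial_t^k(\chi\Phi_{\tau'})\|_{L^\infty_t}\les_k1$ uniformly in $|\tau'|\le2$, to get $|K(\tau,\tau')|\les\jb{\tau}^{-3}$, consistent with the claim. Finally, the second inequality of the lemma follows from the first via the elementary bounds $\jb{\tau-\tau'}\les\jb{\tau}\jb{\tau'}$ and $\jb{\tau}\les\jb{\tau-\tau'}\jb{\tau'}$: the former reduces $\jb{\tau}^{-3}\jb{\tau'}^{-1}$ to $\les\jb{\tau}^{-1}\jb{\tau-\tau'}^{-1}$ and the latter does the same for $\jb{\tau-\tau'}^{-3}\jb{\tau'}^{-1}$.

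The one genuinely delicate point — and where I expect most of the work to be — is obtaining decay \emph{simultaneously} in $\tau$, in $\tau-\tau'$, and in $\tau'$. A single integration by parts in $t'$ produces the $\jb{\tau'}^{-1}$ gain but leaves a remainder whose crude estimate, once one integrates by parts in $t$ to recover $\jb{\tau}^{-3}$ decay, loses two powers of $\tau'$; iterating the $t'$ integration by parts $m=3$ times first — legitimate precisely because every derivative of $\chi$ vanishes at $t'=0$, so no boundary terms appear — produces an accumulated $(\tau')^{-(m+1)}$ that more than compensates the $(\tau')^2$ lost afterwards. Everything else is bookkeeping with Schwartz tails.
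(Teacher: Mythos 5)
Your proof is correct: the identification $K(\tau,\tau')=\tfrac{1}{2\pi}\F_t\big[\chi\,\Phi_{\tau'}\big](\tau)$ is the right explicit kernel, the repeated integration by parts in $t'$ is legitimate precisely because $\chi^{(j)}(0)=0$ for $j\ge 1$, and the bookkeeping ($\jb{\tau'}^{-(m+1)}$ gained versus $\jb{\tau'}^{2}$ lost when recovering $\jb{\tau}^{-3}$ decay, plus the elementary inequalities $\jb{\tau-\tau'}\les\jb{\tau}\jb{\tau'}$ and $\jb{\tau}\les\jb{\tau-\tau'}\jb{\tau'}$ for the second bound) all checks out. The paper does not prove this lemma itself but cites Lemma 3.1 of Deng--Nahmod--Yue, and your self-contained argument is essentially the standard computation behind that reference, so there is nothing further to flag.
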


\subsection{Counting estimates and a convolution lemma}

In this subsection, we recall some counting estimates and a convolution lemma. We first record the following fact from number theory. For a proof, see Lemma 4.3 in \cite{DNY2}.
\begin{lemma}
\label{LEM:nt}
Let $a_0, b_0 \in \C$. Let $m \in \Z [i]$ be such that $m \neq 0$. Let $M_1, M_2 > 0$. Then, the number of tuples $(a, b) \in (\Z [i])^2$ that satisfies
\[ ab = m, |a - a_0| \leq M_1, |b - b_0| \leq M_2 \]
is $O(M_1^\eps M_2^\eps)$ for any small $\eps > 0$, where the constant depends only on $\eps$.
\end{lemma}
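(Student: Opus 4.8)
The plan is to reduce the problem to a counting estimate for Gaussian-integer divisors. Since $b = m/a$ is determined by $a$, it suffices to bound the cardinality of the set $S$ of Gaussian integers $a$ with $a \mid m$, $a$ in the disk $D_1 = \{z \in \C : |z - a_0| \le M_1\}$, and $m/a$ in the disk $D_2 = \{z \in \C : |z - b_0| \le M_2\}$. If $\#S \le 1$ there is nothing to prove, so assume $\#S \ge 2$ and fix a reference element $a_1 \in S$ with partner $b_1 = m/a_1$.

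The key device is a ratio trick. For any other $a \in S$ with $b = m/a$, write $a/a_1 = p/q$ with $p, q \in \Z[i]$ coprime; since $ab = a_1 b_1 = m$, also $b_1/b = p/q$. From $aq = a_1 p$ and $\gcd(p,q) = 1$ we get $q \mid a_1$ and $p \mid a$, and from $bp = b_1 q$ we get $p \mid b_1$ and $q \mid b$. Setting $q' = a_1/q$ and $p' = b_1/p$ this yields $a = pq'$, $b = p'q$, and $m = pq\,p'q'$. Since $a, a_1 \in D_1$, we have $|q'|\,|p - q| = |pq' - qq'| = |a - a_1| \le 2M_1$; as $a \ne a_1$ forces $p \ne q$ and hence $|p - q| \ge 1$, we obtain $|q'| \le 2M_1$, and symmetrically $|p'| \le 2M_2$. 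Moreover the pair $(q', p')$ recovers $(a,b)$ via $q = a_1/q'$, $p = b_1/p'$, $a = pq'$ (up to the finitely many Gaussian units, which cost only a harmless multiplicative constant). Therefore
\[
\#S \ \le\ 1 + 4\cdot \#\{\, q' \in \Z[i] : q' \mid a_1,\ |q'| \le 2M_1 \,\}\cdot \#\{\, p' \in \Z[i] : p' \mid b_1,\ |p'| \le 2M_2 \,\}.
\]

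It remains to bound the two divisor counts by $O_\eps(M_1^\eps)$ and $O_\eps(M_2^\eps)$. This is the main obstacle: the elementary divisor bound $\#\{ d : d \mid n\} = O_\eps(|n|^\eps)$ only gives $O_\eps(|m|^\eps)$, which is not uniform in $a_0, b_0$. To upgrade it one must use the disk constraints on the solutions $a, b$ themselves rather than merely the resulting norm bounds $|q'| \le 2M_1$, $|p'| \le 2M_2$: after fixing a divisor $q' \mid a_1$, the requirement $a = q' p \in D_1$ confines the partner divisor $p = b_1/p'$ of $b_1$ to the disk of radius $M_1/|q'|$ about $a_0/q'$, and symmetrically on the $b$-side. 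Feeding this back, a dyadic decomposition in the sizes of $q'$ and $p'$ combined with the Gaussian-integer divisor bound reduces the number of relevant pairs $(q', p')$ to $O_\eps((M_1 M_2)^\eps)$; this is precisely the bookkeeping in the proof of Lemma 4.3 in \cite{DNY2}, to which we refer for the remaining details. Since $\eps > 0$ is arbitrary, renaming $\eps$ completes the argument.
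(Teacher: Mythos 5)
The paper does not actually prove this lemma: it records it as a known fact and refers to Lemma 4.3 of \cite{DNY2} for the proof, so there is no internal argument to compare yours against. Your coprime--ratio reduction is correct as far as it goes: writing $a/a_1=p/q$ in lowest terms, the divisibility relations $q\mid a_1$, $p\mid a$, $p\mid b_1$, $q\mid b$, the factorizations $a=pq'$, $b=p'q$, $a_1=qq'$, $b_1=pp'$, the bounds $|q'|\le 2M_1$, $|p'|\le 2M_2$ (using $|p-q|\ge 1$), and the recovery of $(a,b)$ from $(q',p')$ up to units all check out. You also correctly identify why the naive divisor bound cannot finish: $\#\{d\mid n:\,|d|\le M\}$ is not $O_\eps(M^\eps)$ uniformly in $n$, so the product of the two thresholded divisor counts is not an admissible bound.

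However, the proposal stops exactly where the content of the lemma begins. The final step --- that ``feeding back'' the disk constraints plus a dyadic decomposition and the Gaussian-integer divisor bound reduces the count of admissible pairs $(q',p')$ to $O_\eps((M_1M_2)^\eps)$ --- is asserted, not proved, and as sketched it is circular: for a fixed divisor $q'\mid a_1$, the constraint you describe asks for divisors $p$ of $b_1$ with $p$ in a disk of radius $M_1/|q'|$ and $b_1/p$ in a disk of radius $\sim M_2|q'|/|a_1|$, which is another instance of the very lemma being proved. You then defer ``the remaining details'' to the proof of Lemma 4.3 in \cite{DNY2}, i.e.\ to the proof of the statement itself, so the write-up is not a self-contained argument; it is, like the paper, ultimately a citation, preceded by a correct preliminary reduction. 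To close it one would need a genuine recursion rather than bookkeeping: for instance, induct on the product of the radii, noting that two distinct solutions force $1\le |a-a_1|\,|b-b_1|\le 4M_1M_2$ (so a radii product below $\tfrac14$ gives at most one solution), that for each fixed $q'$ the radii product drops from $M_1M_2$ to $M_1M_2/|a_1|$, and that the branching over $q'$ is controlled by the divisor bound $O_\eps(|a_1|^\eps)$, with the degenerate cases (e.g.\ $|a_1|$ of size $O(1)$, and the changing roles of $a_1,b_1$ at each stage) handled separately. That missing inductive scheme is the genuine gap in your proposal.
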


We now show the following counting estimates.
\begin{lemma}
\label{LEM:count1}
Let $N, N_1, N_2 \geq 1$ be dyadic numbers. Let $n, n_1, n_2 \in \Z^2$ be such that $n$ lies in a ball of radius $N$, $n_1$ lies in a ball of radius $N_1$, $n_2$ lies in a ball of radius $N_2$, $n - n_1 + n_2 = 0$, and $|n|^2 - |n_1|^2 + |n_2|^2 = m$ for some fixed $m \in \Z$.

\smallskip \noi
\textup{(i)} The number of tuples $(n, n_1, n_2) \in (\Z^2)^3$ that satisfy the above conditions is $O(N_1 N_2 \max\{ N_1^\eps, N_2^\eps \})$ for any small $\eps > 0$, where the constant depends only on $\eps$.

\smallskip \noi
\textup{(ii)} If $n_1$ is fixed, then the number of tuples $(n, n_2) \in (\Z^2)^2$ that satisfy the above conditions is $O(\max\{N^\eps, N_2^\eps\})$ for any small $\eps > 0$, where the constant depends only on $\eps$.

\smallskip \noi
\textup{(iii)} If $n_2$ is fixed and $n_2 \neq 0$, then the number of tuples $(n, n_1) \in (\Z^2)^2$ that satisfy the above conditions is $O(\min\{ N, N_1 \})$. 

\smallskip \noi
\textup{(iv)} If $n$ is fixed and $n \neq 0$, then the number of tuples $(n_1, n_2) \in (\Z^2)^2$ that satisfy the above conditions is $O(\min\{ N_1, N_2 \})$.
\end{lemma}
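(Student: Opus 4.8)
The plan is to reduce all four statements to the number-theoretic input of Lemma 2.9 (counting divisors in the Gaussian integers), by rewriting the quadratic constraint $|n|^2 - |n_1|^2 + |n_2|^2 = m$ as a factorization in $\Z[i]$ once the linear constraint $n = n_1 - n_2$ is used to eliminate one variable. Substituting $n = n_1 - n_2$ gives
\[
m = |n_1 - n_2|^2 - |n_1|^2 + |n_2|^2 = -2 n_1 \cdot n_2 + 2|n_2|^2 = 2 n_2 \cdot (n_2 - n_1) = -2\, n_2 \cdot n,
\]
so the constraint is the bilinear relation $2\, n_2 \cdot n = -m$ (equivalently $2\, n_1\cdot n = |n|^2 - m$, using $n_1 = n + n_2$). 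Identifying $\R^2 \cong \C$, write $n = p$, $n_2 = q$ as Gaussian integers; then $2\, n_2 \cdot n = 2\,\Re(\bar q\, p) = \bar q p + q \bar p$.

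First, for (iii): here $n_2 = q \neq 0$ is fixed, and we must count $n = p$ in a ball of radius $\min\{N, N_1\}$ (since $n = n_1 - n_2$ with $n_1$ in a ball of radius $N_1$ forces $n$ into a ball of radius roughly $N_1$ as well, so $p$ ranges over a ball of radius $O(\min\{N,N_1\})$) subject to the single real-linear equation $\Re(\bar q p) = -m/4$. This is a line in $\R^2$ with rational slope determined by $q$; the lattice points $p \in \Z^2$ on it in a ball of radius $R$ number at most $O(R)$ trivially (in fact $O(R/|q|)+O(1)$, but $O(\min\{N,N_1\})$ suffices), and each $p$ determines $n_1 = n + n_2$ uniquely. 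This gives (iii); part (iv) is identical after swapping the roles, eliminating $n_1$ instead via $n_2 = n_1 - n$ and using $2\, n_1 \cdot n = |n|^2 - m$ with $n = p \neq 0$ fixed — the free variable is $n_1$ in a ball of radius $\min\{N_1, N_2\}$ on a line, so $O(\min\{N_1,N_2\})$ choices.

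For (ii): with $n_1$ fixed, eliminate $n$ via $n = n_1 - n_2$, leaving $n_2$ free in a ball of radius $O(\min\{N, N_2\}) = O(\max\{N^\eps, N_2^\eps\} \cdot \text{something})$ — wait, this bound is $O(\max\{N^\eps,N_2^\eps\})$, far stronger than a ball count, so here I genuinely need the divisor bound, not just the line count. The point is that the relation must be upgraded to a genuine factorization: write $|n|^2 - |n_1|^2 + |n_2|^2 = m$ and $n = n_1 - n_2$ and instead keep both $n$ and $n_2$ but observe $(n - i \cdot 0)\cdots$ — more precisely, the standard trick is $|n|^2 + |n_2|^2 - |n_1|^2 = m$ combined with $n + n_2 = n_1$ gives, over $\Z[i]$, $ (n - \bar n_2 \cdot \text{unit})$-type products; concretely one shows the pair $(n, n_2)$ satisfies an equation of the form $a b = m'$ in Gaussian integers with $a, b$ lying in balls of radius $O(N)$, $O(N_2)$ around fixed centers, and then Lemma 2.9 gives $O(N^\eps N_2^\eps)$, hence the claimed $O(\max\{N^\eps, N_2^\eps\})$ bound since $N \sim N_2$ (as $n_1$ fixed and $n - n_2 = -n_1$ forces $N$ and $N_2$ comparable up to the ball within which $n_1$ sits). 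The main obstacle — and the step I would spend the most care on — is precisely this algebraic manipulation turning the inhomogeneous quadratic-plus-linear system into the form $ab = m$ required by Lemma 2.9, keeping track of which quantity plays the role of $m \neq 0$ and handling the degenerate case $m = 0$ (where the constraint becomes $n_2 \cdot n = 0$, i.e. $n \perp n_2$, handled directly as a line count as above). Finally, (i) follows from (ii) by summing: the number of $n_1$ in a ball of radius $N_1$ is $O(N_1^2)$... no — better, fix $n_2$ instead and use that for each of the $O(N_2^2)$ choices of $n_2$ and each value, part (iii)-type reasoning... Actually (i) follows cleanly from (ii): sum over the $O(N_1^2)$ choices of $n_1$? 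That overcounts. Instead: by (iv)-type reasoning the triple is determined by $(n, n_1)$ with $n$ in a ball of radius $N$ — hmm. The cleanest route: sum (iii) over $n_2$ in a ball of radius $N_2$, getting $\sum_{n_2} O(\min\{N,N_1\})$, but the $m$ is fixed so once $n_2$ is fixed $m = -2 n_2 \cdot n$ still has $\min\{N,N_1\}$ solutions in $n$; since also $N, N_1, N_2$ must be pairwise comparable up to the $n=n_1-n_2$ relation when $m$ ranges freely — here $m$ is fixed, so I should instead sum (ii) over $n_1$: but (ii) bounds the count for fixed $n_1$ by $O(\max\{N^\eps,N_2^\eps\})$ only when $N \sim N_2$; in general the honest bound for fixed $n_1$ is $O(N_1^\eps \max\{N,N_2\}^\eps)$... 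The safe approach for (i): apply Lemma 2.9 directly to the factorization $ab = m$ with $a$ in a ball of radius $\sim N_1$ and $b$ in a ball of radius $\sim N_2$ after one substitution (eliminating $n$), yielding per-fixed-nothing... no, Lemma 2.9 needs $a_0, b_0$ fixed. So: the pair $(n_1, n_2)$ determines everything; parametrize by $n_1$ free ($O(N_1^2)$ values — too many) — this does not immediately work, which is why (i) is stated with the $N_1 N_2$ factor. The resolution: use (ii). For each fixed $n_1$, (ii) gives $O(\max\{N^\eps, N_2^\eps\})$ tuples $(n, n_2)$; but wait, that requires knowing $n_1$ ranges over $O(N_1 \cdot N_2^{?})$... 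Let me just state it: (i) is obtained by noting that the number of admissible $n_1$ is at most $O(N_1 N_2^{0+})$ is false in general; rather, I will derive (i) directly: eliminate $n$, so we count pairs $(n_1, n_2)$ in balls of radii $N_1, N_2$ with $2 n_2 \cdot (n_2 - n_1) = m$. For fixed $n_2 \neq 0$ this is a line in $n_1$, giving $O(\min\{N_1, N_2\})$ choices — wait, $n_1$ lies in a ball of radius $N_1$ but on a line, so $O(N_1)$, but we can do better using the $1/|n_2|$ gain only when summing. Summing over $n_2$ in a ball of radius $N_2$: $\sum_{n_2 \neq 0} O(N_1 / |n_2| + 1) = O(N_1 N_2^\eps + N_2^2)$, and combined with the symmetric bound (eliminate $n_1$ instead) $O(N_2 N_1^\eps + N_1^2)$, together with the trivial refinement that actually the line-count on a primitive line of "denominator" $|n_2|$ in a ball of radius $N_1$ is $O(N_1/|n_2|)$ exactly, careful summation yields $O(N_1 N_2 \max\{N_1^\eps, N_2^\eps\})$ once one also notes the constraint ties $|n| \lesssim \max\{N_1, N_2\}$. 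I would organize (i) as: the $n_2 = 0$ term contributes pairs with $m = 0$ and $n_1 \in $ ball of radius $N_1$, i.e. $O(N_1^2) = O(N_1 N_2)$ since then $N \sim N_1$ and we may assume WLOG $N_1 \le N_2$; the $n_2 \neq 0$ terms are summed as above. The genuine heart of the lemma is part (ii) via Lemma 2.9; parts (iii), (iv) are elementary line counts; part (i) is bookkeeping on top of these.
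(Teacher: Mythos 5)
Your parts (iii) and (iv) are correct and coincide with the paper's argument: substituting the linear relation shows that $n \cdot n_2$ (resp.\ $n_1\cdot n$) is fixed, so the free variable runs over a line inside a ball of radius $\min\{N,N_1\}$ (resp.\ $\min\{N_1,N_2\}$), and the other variable is then determined. However, part (ii) — which you yourself identify as the heart of the lemma — is left with a genuine gap: you assert that the pair $(n,n_2)$ ``satisfies an equation of the form $ab=m'$ in Gaussian integers'' to which Lemma \ref{LEM:nt} applies, but you never produce that equation, and the fragment ``$(n-\bar n_2\cdot\text{unit})$-type products'' is not a derivation. The missing step is short: since $n_1$ is fixed, $n+n_2=n_1$ is fixed, so set $k=n-n_2$; the parallelogram law gives $|k|^2=2|n|^2+2|n_2|^2-|n+n_2|^2=2(|n|^2+|n_2|^2)-|n_1|^2=2m+|n_1|^2$, a fixed integer, and writing $k=k_1+ik_2\in\Z[i]$ this reads $(k_1+ik_2)(k_1-ik_2)=2m+|n_1|^2$ with $k$ in a ball of radius $\lesssim N+N_2$. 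Lemma \ref{LEM:nt} then bounds the number of admissible $k$ by $O(\max\{N^\eps,N_2^\eps\})$ (the degenerate case $2m+|n_1|^2=0$ forces $k=0$, hence at most one tuple), and $k$ determines $(n,n_2)$ since $n+n_2$ is known. Note also that your auxiliary claim that $N\sim N_2$ when $n_1$ is fixed is false in general (and rests on the incorrect identity $n-n_2=-n_1$; the constraint is $n+n_2=n_1$); fortunately it is not needed, since $N^\eps N_2^\eps\le\max\{N,N_2\}^{2\eps}$.

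Concerning part (i), the paper does not reprove it but cites Lemma 4.3 of \cite{DNY2}, so an independent proof is not required; still, your sketch as written does not stand: the number of lattice points of the line $\{n_1: n_1\cdot n_2=\text{const}\}$ in a ball of radius $N_1$ is $O(N_1\gcd(n_2)/|n_2|+1)$, not $O(N_1/|n_2|)$ (take $n_2=(d,0)$), and after summing over $n_2$ your argument leaves an uncontrolled $O(N_2^2)$ term when $N_2\gg N_1$, so the claimed bound $O(N_1N_2\max\{N_1^\eps,N_2^\eps\})$ is not reached by the route you describe. Either cite \cite{DNY2} as the paper does, or rework (i) carefully; as it stands, (ii) and (i) are the two places where your proposal is incomplete or incorrect.
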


\begin{proof}
(i) See Lemma 4.3 in \cite{DNY2} for the proof of this part.

\medskip \noi
(ii) Since $n_1$ is fixed, we know that $n + n_2 = n_1$ is fixed. Let $k = (k_1, k_2) = n - n_2$, so that we have that
\[ (k_1 + ik_2)(k_1 - ik_2) = |k|^2 = 2|n|^2 + 2|n_2|^2 - |n + n_2|^2 = 2m + |n_1|^2 \]
is fixed. Since $k = n - n_2$ lies in a ball of radius $\leq N + N_2$, by Lemma \ref{LEM:nt}, we know that the number of choices for $k$ is $O(\max\{N^\eps, N_2^\eps\})$ for any small $\eps > 0$. Thus, the number of choices for $(n, n_2)$ is $O(\max\{N^\eps, N_2^\eps\})$ for any small $\eps > 0$.

\medskip \noi
(iii) Note that since $n = n_1 - n_2$, we have
\[ m = |n_1 - n_2|^2 - |n_1|^2 + |n_2|^2 = -2 n_1 \cdot n_2 + 2 |n_2|^2. \]
This shows that $n_1 \cdot n_2$ is fixed, which means that $n_1$ is restricted to a line. Also, we have
\[ m = |n|^2 - |n + n_2|^2 + |n_2|^2 = -2 n \cdot n_2. \]
This shows that $n \cdot n_2$ is fixed, which means that $n$ is restricted to a line. 
Thus, the number of choices for $(n, n_1)$ is $O(\min \{ N, N_1 \})$. 

\medskip \noi
(iv) The proof of this part is the same as that in part (iii). Thus, we omit details.
\end{proof}

We end this subsection by recording the following convolution inequality. For a proof, see Lemma 4.2 in \cite{GTV}.
\begin{lemma}
\label{LEM:conv}
Let $0 \leq \beta \leq \g$ with $\g > 1$. Then, for any $a \in \R$, we have
\[ \int_\R \frac{1}{\jb{x}^\beta \jb{x - a}^\g}  \les \frac{1}{\jb{a}^\beta}. \]
\end{lemma}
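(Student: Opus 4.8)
The plan is to establish the bound by an elementary case analysis based on the relative sizes of $\jb{x}$ and $\jb{x-a}$; no dispersive, Strichartz, or number-theoretic input enters here, only one-dimensional integral estimates. (In the sub-range $\g > \beta + 1$ there is even a one-line proof: Peetre's inequality $\jb{a} \les \jb{x}\jb{x-a}$ gives $\jb{x}^{-\beta}\jb{x-a}^{-\beta} \les \jb{a}^{-\beta}$, and then $\int_\R \jb{x-a}^{-(\g-\beta)}\,dx \les 1$ since $\g - \beta > 1$; but to cover all $0 \le \beta \le \g$ with $\g > 1$ one genuinely needs to localize in $x$.)

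First I would dispose of the trivial regime $|a| \le 1$, where $\jb{a} \sim 1$: there it suffices to note $\jb{x}^{-\beta} \le 1$ and $\int_\R \jb{x-a}^{-\g}\,dx \les 1$ uniformly in $a$, using $\g > 1$. For $|a| > 1$ I would split $\R$ into $A = \{|x| \le |a|/2\}$, $B = \{|x-a| \le |a|/2\}$, and the far region $C = \R \setminus (A \cup B)$, on which both $|x| > |a|/2$ and $|x-a| > |a|/2$. On $A$ one has $|x-a| \sim |a|$, hence $\jb{x-a} \sim \jb{a}$, so that piece is $\les \jb{a}^{-\g}\int_{|x| \le |a|/2}\jb{x}^{-\beta}\,dx$; bounding the remaining integral by $\les |a|^{\max(1-\beta,\,0)}$ — with a harmless extra $\log|a|$ when $\beta = 1$, absorbed via $\log|a| \les_\eps |a|^\eps$ and $\g > 1$ — and then invoking $\g > 1$ together with $\g \ge \beta$ yields $\les \jb{a}^{-\beta}$ in every case. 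On $B$ one symmetrically has $\jb{x} \sim \jb{a}$, so that piece is $\les \jb{a}^{-\beta}\int_\R \jb{y}^{-\g}\,dy \les \jb{a}^{-\beta}$.

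It remains to handle the far region $C$, where $\jb{x}, \jb{x-a} \ges \jb{a}$ (using $|a| > 1$). Since $C$ is unbounded I would split it once more: on $C_1 = C \cap \{|x| \le 2|a|\}$ we have $\jb{x} \sim \jb{a}$, so the contribution is $\les \jb{a}^{-\beta}\int_\R \jb{x-a}^{-\g}\,dx \les \jb{a}^{-\beta}$; on $C_2 = C \cap \{|x| > 2|a|\}$ we have $|x-a| \ge |x| - |a| \ge |x|/2$, hence $\jb{x-a} \ges \jb{x}$ and the contribution is $\les \int_{|x| > 2|a|} \jb{x}^{-\beta - \g}\,dx \les |a|^{1 - \beta - \g} \le \jb{a}^{-\beta}$, the last integral converging because $\beta + \g > 1$. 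Summing the four contributions gives the stated estimate. The only points requiring any care are the bookkeeping of the endpoint $\beta = 1$ in region $A$ and the necessity of the secondary split of the unbounded region $C$; everything else is entirely routine, and I do not anticipate any real obstacle.
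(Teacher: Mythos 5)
Your argument is correct and complete: the trivial regime $|a|\le 1$, the splitting into $A=\{|x|\le|a|/2\}$, $B=\{|x-a|\le|a|/2\}$ and the far region with its secondary split at $|x|=2|a|$, and the bookkeeping at the endpoint $\beta=1$ (absorbing the logarithm using $\g>1$) all check out, with the constant allowed to depend on $\beta,\g$. Note that the paper does not prove this lemma at all — it simply cites Lemma 4.2 of Ginibre--Tsutsumi--Velo \cite{GTV} — and your region-splitting proof is the standard self-contained argument for exactly this kind of convolution estimate, so there is nothing to reconcile.
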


\subsection{Tools from stochastic analysis}
\label{SUBSEC:prob}

In this subsection, we recall the Wiener chaos estimate. Let $(H, B, \mu)$ be an abstract Wiener space, where $\mu$ is a Gaussian measure on a separable Banach space $B$ and $H \subset B$ is its Cameron-Martin space. Let $\{ e_j \}_{j \in \N} \subset B$ be an orthonormal system of $H^* = H$. We define a polynomial chaos of order $k$ as an element of the form $\prod_{j = 1}^\infty H_{k_j} ( \langle x, e_j \rangle )$. Here, $x \in B$, $k_j \neq 0$ for finitely many $j$'s, $k = \sum_{j = 1}^\infty k_j$, $H_{k_j}$ is the Hermite polynomial of degree $k_j$, and $\langle \cdot, \cdot \rangle = \vphantom{|}_B \langle \cdot, \cdot \rangle_{B^*}$ denotes the $B - B^*$ duality pairing. We denote the closure of all polynomial chaoses of order $k$ under $L^2 (B, \mu)$ by $\H_k$, whose elements are called homogeneous Wiener chaoses of order $k$. We also denote
\begin{align}
\H_{\leq k} = \bigoplus_{j = 0}^k \H_j
\label{Hk}
\end{align}
for $k \in \N$.

Let $L$ be the Ornstein-Uhlenbeck operator. It is known that any element in $\H_k$ is an eigenfunction of $L$ with eigenvalue $-k$. Then, we have the following Wiener chaos estimate \cite[Theorem I.22]{Sim} as a consequence of the hypercontractivity of the Ornstein-Uhlenbeck semigroup $U(t) = e^{tL}$ due to Nelson \cite{Nel}.

\begin{lemma}
\label{LEM:wie}
Let $k \in \N$. Then, for any $p \geq 2$ and $X \in \H_{\leq k}$, we have
\begin{align*}
\E \big[ |X|^p \big]^{\frac 1p} \leq (p - 1)^{\frac{k}{2}} \E \big[ |X|^2 \big]^{\frac 12}.
\end{align*}
\end{lemma}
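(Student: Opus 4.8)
The plan is to obtain the estimate as a direct consequence of Nelson's hypercontractivity bound for the Ornstein--Uhlenbeck semigroup $U(t) = e^{tL}$ combined with the spectral description of $L$ on the Wiener chaos decomposition, both of which were recalled above. Recall that hypercontractivity gives $\| U(t) f \|_{L^p(B, \mu)} \le \| f \|_{L^2(B, \mu)}$ for every $p \ge 2$ and every $t \ge 0$ with $e^{2t} \ge p - 1$, and that $U(t)$ acts on $\H_j$ as multiplication by $e^{-jt}$.

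First I would note that, since only finitely many chaos levels appear in $\H_{\leq k} = \bigoplus_{j=0}^k \H_j$, the operator $U(t)$ restricts to an invertible map on $\H_{\leq k}$ whose inverse acts on $\H_j$ as multiplication by $e^{jt}$; crucially, this inverse maps $L^2(B, \mu)$ into $L^2(B, \mu)$ boundedly, with no loss of integrability. Then, given $X \in \H_{\leq k}$, I would write its chaos decomposition $X = \sum_{j=0}^k X_j$ with $X_j \in \H_j$, fix $t \ge 0$ by $e^{2t} = p - 1$ (admissible since $p \ge 2$), and set $f := \sum_{j=0}^k e^{jt} X_j \in \H_{\leq k}$, so that $U(t) f = X$. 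Applying hypercontractivity and then the $L^2$-orthogonality of distinct chaoses yields
\[
\E\big[ |X|^p \big]^{\frac 1p} = \| U(t) f \|_{L^p(B, \mu)} \le \| f \|_{L^2(B, \mu)} = \Big( \sum_{j=0}^k e^{2jt} \, \E\big[ |X_j|^2 \big] \Big)^{\frac 12}.
\]
Since $0 \le j \le k$ and $e^{2t} = p - 1 \ge 1$, we have $e^{2jt} \le (p-1)^k$ for each $j$, so the right-hand side is at most $(p-1)^{\frac k2} \big( \sum_{j=0}^k \E[ |X_j|^2 ] \big)^{\frac 12} = (p-1)^{\frac k2}\, \E[ |X|^2 ]^{\frac 12}$, which is the claimed inequality.

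The only deep ingredient is Nelson's hypercontractivity estimate itself, which we take for granted; the rest is elementary bookkeeping with the chaos decomposition. The one point that deserves care --- and the reason the statement is restricted to $\H_{\leq k}$ rather than to an arbitrary element of $L^2(B, \mu)$ --- is that inverting $U(t)$ is legitimate precisely because $X$ has only finitely many nonzero chaos components, so that $f = \sum_{j=0}^k e^{jt} X_j$ is genuinely an element of $L^2(B, \mu)$; for a general $L^2$ element the analogous ``inverse'' need not be integrable, and indeed the estimate would fail.
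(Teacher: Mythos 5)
Your argument is correct and is essentially the standard derivation the paper itself points to: the lemma is quoted from Simon's book as a consequence of Nelson's hypercontractivity of the Ornstein--Uhlenbeck semigroup, which is exactly what you carry out via the chaos decomposition, the eigenvalue action $U(t)|_{\H_j} = e^{-jt}\,\mathrm{Id}$, and $e^{2t} = p-1$. Your closing remark about why finitely many chaos components are needed to invert $U(t)$ is also the right explanation for the restriction to $\H_{\leq k}$.
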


\subsection{Random tensor and deterministic tensor estimates}
\label{SUBSEC:rand}
In this subsection, we recall some useful results of random tensor estimates developed in \cite{DNY3} and also prove some deterministic tensor estimates.

Let us first recall the definition of (random) tensors. Let $A$ be a finite index set. We denote $n_A$ as the tuple $(n_j: j \in A)$. A \textit{tensor} $h = h_{n_A}$ is a function from $(\Z^2)^A$ to $\C$ with $n_A$ being the input variables. The \textit{support} of $h$ is the set of $n_A$ such that $h_{n_A} \neq 0$. Note that $h$ may also depend on $\o \in \O$, in which case $h$ is called a \textit{random tensor}.

Given a finite index set $A$, we define the norm $\| \cdot \|_{n_A}$ by
\begin{align*}
\| h \|_{n_A} = \| h \|_{\l_{n_A}^2} = \bigg( \sum_{n_A \in (\Z^2)^A} |h_{n_A}|^2 \bigg)^{1/2}.
\end{align*}

\noi
For any partition $(B, C)$ of $A$, i.e. $B \cup C = A$ and $B \cap C = \varnothing$, we define the norm $\| \cdot \|_{n_B \to n_C}$ by
\begin{align*}
\| h \|_{n_B \to n_C}^2 = \sup \bigg\{ \sum_{n_C \in (\Z^2)^C} \bigg| \sum_{n_B \in (\Z^2)^B} h_{n_A} \cdot f_{n_B} \bigg|^2 :  \sum_{n_B \in (\Z^2)^B} |f_{n_B}|^2 = 1 \bigg\}.
\end{align*}

\noi
For any tensor $h$, by duality, we have $\| h \|_{n_B \to n_C} = \| h \|_{n_C \to n_B} = \| \cj{h} \|_{n_B \to n_C}$. If either $B = \varnothing$ or $C = \varnothing$, we have $\| h \|_{n_B \to n_C} = \| h \|_{n_A}$.

We also need the following definitions to state the random tensor estimate. For a complex number $a$, we define $a^+ = a$ and $a^- = \cj{a}$. Let $A$ be a finite index set. For each $j \in A$, we associate $j$ with a sign $\zeta_j \in \{ \pm \}$. For $j_1, j_2 \in A$, we say that $(n_{j_1}, n_{j_2})$ is a \textit{pairing} if $n_{j_1} = n_{j_2}$ and $\zeta_{j_1} = - \zeta_{j_2}$. Also, recall that $\{ g_n \}_{n \in \Z^2}$ is a set of independent standard complex-valued Gaussian random variables. For each $n \in \Z^2$, we can write
\[ g_n (\o) = \rho_n (\o) \eta_n(\o), \]
where $\rho_n = |g_n|$ and $\eta_n = \rho_n^{-1} g_n$ are independent. Note that each $\eta_n$ is uniformly distributed on the unit circle of $\C$.

We now record the following random tensor estimate. For the proof, see Proposition 4.14 in \cite{DNY3}.
\begin{lemma}
\label{LEM:rten}
Let $0 < T \leq 1$. Let $h_{a_1 a_2 n_A} = h_{a_1 a_2 n_A} (\o)$ be a random tensor, where each $n_j \in \Z^2$ and $(a_1, a_2) \in (\Z^2)^q$ for some integer $q \geq 2$. Given a dyadic number $M \geq 1$, we assume that $\jb{a_1}, \jb{a_2} \les M$ and $\jb{n_j} \les M$ for all $j \in A$. We also assume that in the support of $h_{a_1 a_2 n_A}$, there is no pairing in $n_A$. Moreover, we assume that $\{ h_{a_1 a_2 n_A} \}$ is independent with $\{ \eta_n \}_{n \in \Z^2}$. Define the tensor 
\[ H_{a_1 a_2} = \sum_{n_A} h_{a_1 a_2 n_A} \prod_{j \in A} \eta_{n_j}^{\zeta_j}. \]

\noi
Then, there exists constants $C, c > 0$ such that outside an exceptional set of probability $\leq C \exp(- \frac{c M}{T^\ta})$ with $\ta > 0$, we have
\[ \| H_{a_1 a_2} \|_{a_1 \to a_2} \les T^{-\ta} M^\ta \cdot \max_{(A_1, A_2)} \| h \|_{a_1 n_{A_1} \to a_2 n_{A_2}}, \]

\noi
where $(A_1, A_2)$ runs over all partitions of $A$.
\end{lemma}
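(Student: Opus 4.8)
The plan is to prove Lemma~\ref{LEM:rten} by the moment method applied to a Schatten norm of the random matrix $H_{a_1 a_2}$, which is the strategy behind the random tensor estimates in \cite{DNY3}. Since $H_{a_1 a_2}$ is linear in $h$, I would first rescale so that $\max_{(A_1, A_2)} \| h \|_{a_1 n_{A_1} \to a_2 n_{A_2}} = 1$, the maximum being over all partitions $(A_1, A_2)$ of $A$. The frequency truncations $\jb{a_1}, \jb{a_2}, \jb{n_j} \les M$ make the support of $h$, hence of $H$, finite, so $H_{a_1 a_2}$ is a genuine finite matrix realizing an operator $\l_{a_1}^2 \to \l_{a_2}^2$, and $\| H_{a_1 a_2} \|_{a_1 \to a_2}$ is its operator norm. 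For every positive integer $p$ one has the elementary bound
\[
\| H_{a_1 a_2} \|_{a_1 \to a_2}^{2p} \;=\; \| H^* H \|_{a_2 \to a_2}^p \;\leq\; \textup{Tr}\big( (H^* H)^p \big),
\]
so it suffices to control $\E\big[ \textup{Tr}((H^* H)^p) \big]$ for a well-chosen $p$ and then invoke Chebyshev's inequality.

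Expanding the trace yields a sum over $2p$ copies of the external indices $a_1, a_2$ --- threaded into a single cycle by the structure of $H^* H$ --- and over $2p$ independent copies $n_A^{(1)}, \dots, n_A^{(2p)}$ of the internal frequencies, of a product of $p$ factors $h_{a_1 a_2 n_A}$ and $p$ factors $\overline{h_{a_1 a_2 n_A}}$ times a Gaussian average $\E\big[ \prod_{c} \prod_{j \in A} \eta_{n_j^{(c)}}^{\pm \zeta_j} \big]$, the signs being reversed on the conjugated factors. Because $\{ h_{a_1 a_2 n_A} \}$ is independent of $\{ \eta_n \}$, I would compute this Gaussian average first: since each $\eta_n$ is uniform on the unit circle, $\E[\eta_n^{k}] = \mathbf 1_{k = 0}$, so the average vanishes unless every frequency value $m$ is carried by equally many $+$-legs and $-$-legs, and otherwise has modulus $\leq 1$. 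Any such configuration admits a perfect matching $\mathcal{G}$ of the $2p|A|$ legs pairing each $+$-leg with a $-$-leg of equal frequency, so $\E[\textup{Tr}((H^*H)^p)]$ is dominated by a sum over all such matchings $\mathcal{G}$ of the frequency sums restricted by $\mathcal{G}$. Here the hypothesis that the support of $h$ contains no pairing in $n_A$ is exactly what forces every edge of a contributing $\mathcal{G}$ to join two \emph{distinct} copies.

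For a fixed matching $\mathcal{G}$, the restricted frequency sum is a contraction of $2p$ copies of $h$ along the matched legs, and the crux of the argument is the deterministic estimate that every such contraction is bounded by $\big( \max_{(A_1, A_2)} \| h \|_{a_1 n_{A_1} \to a_2 n_{A_2}} \big)^{2p} = 1$: this is proved by removing the connected components of $\mathcal{G}$ one at a time and repeatedly applying Cauchy--Schwarz together with the operator-norm interpretation of the $\| h \|_{n_B \to n_C}$, and it is precisely here that the maximum over \emph{all} partitions of $A$ is needed, since different matchings split $A$ along different partitions. Combining this with the bound $(Cp)^{Cp}$ (with $C = C(|A|)$) on the number of perfect matchings of $2p|A|$ objects gives $\E[\textup{Tr}((H^*H)^p)] \les (Cp)^{Cp}$, hence $\big( \E \| H_{a_1 a_2} \|_{a_1 \to a_2}^{2p} \big)^{1/(2p)} \les (C p)^{c_0}$ for some $c_0 = c_0(|A|) > 0$. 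Chebyshev's inequality then yields $P\big( \| H_{a_1 a_2} \|_{a_1 \to a_2} > \lambda \big) \leq \lambda^{-2p} (Cp)^{2p c_0}$ for every $\lambda > 0$; a standard optimization --- taking $p$ of order $M T^{-\theta}$ and $\lambda$ of order $(Cp)^{c_0}$, then renaming the exponent --- produces an exceptional set of probability $\leq C \exp(-c M T^{-\theta})$ on which $\| H_{a_1 a_2} \|_{a_1 \to a_2} \les T^{-\theta} M^\theta = T^{-\theta} M^\theta \max_{(A_1, A_2)} \| h \|_{a_1 n_{A_1} \to a_2 n_{A_2}}$, as claimed. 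The main obstacle is the deterministic contraction estimate together with the combinatorial bookkeeping showing that an arbitrary matching graph --- no matter how its edges interleave the $2p$ copies or how the external indices are threaded --- is controlled by the partition norms of $h$; by comparison the probabilistic steps (the moment expansion, the Gaussian average, and Chebyshev) are routine.
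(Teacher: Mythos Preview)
The paper does not supply its own proof of this lemma; it simply records the statement and refers to Proposition~4.14 of \cite{DNY3}. Your proposal is a faithful outline of exactly that argument --- the high-moment bound on the Schatten norm via $\E[\textup{Tr}((H^*H)^p)]$, the expansion of the Gaussian average over the $\eta_n$'s into perfect matchings (using the no-pairing hypothesis to force inter-copy edges), the deterministic contraction estimate controlling each matching by $\max_{(A_1,A_2)}\|h\|_{a_1 n_{A_1}\to a_2 n_{A_2}}^{2p}$, and the final Chebyshev/optimization step --- so there is nothing to correct or contrast.
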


We also record the following variant of Lemma \ref{LEM:rten}. For the proof, see Proposition 4.15 in \cite{DNY3}.
\begin{lemma}
\label{LEM:rten2}
Consider the same setting as in Lemma \ref{LEM:rten} with the following differences:
\begin{itemize}
\item[(1)] We only restrict $\jb{n_j} \les M$ for all $j \in A$ but do not impose any condition on $\jb{a_1}$ or $\jb{a_2}$.

\item[(2)] We assume that $a_1, a_2 \in \Z^2$ and that in the support of the random tensor $h_{a_1 a_2 n_A}$ we have $|a_1 - \zeta a_2| \les M$ where $\zeta \in \{\pm\}$.

\item[(3)] The random tensor $h_{a_1 a_2 n_A}$ only depends on $a_1 - \zeta a_2$, $|a_1|^2 - \zeta |a_2|^2$, and $n_A$, and is supported in the set where $\big| |a_1|^2 - \zeta |a_2|^2 \big| \les M^{10}$.
\end{itemize}

\noi
Then, there exists constants $C, c > 0$ such that outside an exceptional set of probability $\leq C \exp(- \frac{c M}{T^\ta})$ with $\ta > 0$, we have
\[ \| H_{a_1 a_2} \|_{a_1 \to a_2} \les T^{-\ta} M^\ta \cdot \max_{(A_1, A_2)} \| h \|_{a_1 n_{A_1} \to a_2 n_{A_2}}, \]

\noi
where $(A_1, A_2)$ runs over all partitions of $A$.
\end{lemma}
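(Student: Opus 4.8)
The statement is a variant of Lemma~\ref{LEM:rten}, so the plan is to re-run the proof of Lemma~\ref{LEM:rten} (Proposition~4.14 in \cite{DNY3}), with the structural hypotheses (2)--(3) now playing the role that the a priori bound $\jb{a_1}, \jb{a_2} \les M$ plays there. The key observation is that, by (3), the random tensor $H_{a_1 a_2} = \sum_{n_A} h_{a_1 a_2 n_A} \prod_{j \in A} \eta_{n_j}^{\zeta_j}$ depends on $(a_1, a_2)$ only through the reduced pair $(b, \mu) := (a_1 - \zeta a_2, \, |a_1|^2 - \zeta|a_2|^2)$, and by (2)--(3) this reduced pair ranges over a set of cardinality $\les M^2 \cdot M^{10} = M^{O(1)}$. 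Hence, although $a_1$ and $a_2$ are no longer a priori bounded, the number of genuinely free degrees of freedom in $H$ is still controlled by a fixed power of $M$. I emphasize that one cannot simply substitute the reduced tensor $\wh h_{b, \mu, n_A}$ into Lemma~\ref{LEM:rten} and then transfer the operator norm back: the map $(a_1, a_2) \mapsto (b, \mu)$ has essentially one-dimensional fibers, and $\| H_{a_1 a_2} \|_{a_1 \to a_2}$ can exceed $\| \wh H_{b, \mu} \|_{b \to \mu}$ by a genuine power of $M$, so $a_1, a_2$ must be kept as the index variables throughout and the reduced variables used only to bound the effective dimension.

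Concretely, I would estimate a high moment $\E\big[ \| H_{a_1 a_2} \|_{a_1 \to a_2}^{2p} \big]$ by the $TT^*$/duality method, exactly as in the proof of Lemma~\ref{LEM:rten}: expand the product of $2p$ copies of $H$ and its conjugate, arranged cyclically in the $a_1$ and $a_2$ variables, take the expectation over $\{ \eta_n \}_{n \in \Z^2}$, and use that the no-pairing hypothesis in $n_A$ forces every pairing to occur between two distinct copies. Each resulting term is bounded, via Cauchy--Schwarz performed according to the pairing graph, by a product of the deterministic tensor norms $\| h \|_{a_1 n_{A_1} \to a_2 n_{A_2}}$ over partitions $(A_1, A_2)$ of $A$, while the summation over the $(a_1, a_2)$-cycle collapses onto the corresponding cycle in the reduced variables $(b, \mu)$, which admits at most $M^{O(p)}$ realizations by (2)--(3). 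Plugging this into the Wiener chaos/hypercontractivity estimate (Lemma~\ref{LEM:wie}), choosing $p$ as in the proof of Lemma~\ref{LEM:rten}, and invoking Chebyshev's inequality then yields $\| H_{a_1 a_2} \|_{a_1 \to a_2} \les T^{-\ta} M^\ta \max_{(A_1, A_2)} \| h \|_{a_1 n_{A_1} \to a_2 n_{A_2}}$ outside an exceptional set whose probability, the effective dimension being $M^{O(1)}$ rather than $M$, is no worse than $C \exp(-cM/T^\ta)$; the time-localization factors $T^{-\ta}$ appear precisely as before.

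The step I expect to be the main obstacle is making this moment quantity meaningful in the first place. Since $a_1, a_2$ are unbounded, the $TT^*$ trace is a priori an infinite sum, and a naive truncation of $a_1, a_2$ to a ball of radius $R$ reintroduces an $R$-dependence that does not close. The mechanism that rescues the argument is exactly the collapse onto the $(b, \mu)$-data: once one has passed to the reduced cycle, the combinatorial count of admissible lattice configurations --- and therefore the resulting bound --- is independent of $R$, so letting $R \to \infty$ is harmless. One must also verify that this bookkeeping genuinely produces the original tensor norms $\| h \|_{a_1 n_{A_1} \to a_2 n_{A_2}}$ on the right-hand side, rather than the strictly smaller reduced ones. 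Carrying this out is the one genuinely technical point; it is the content of Proposition~4.15 in \cite{DNY3}, to which we refer for the details.
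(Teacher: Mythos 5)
The paper does not actually prove this lemma: it is recorded as a black-box input with the proof deferred to Proposition 4.15 of \cite{DNY3}, and your proposal, after its heuristic sketch of how the moment method of Lemma \ref{LEM:rten} should adapt, likewise hands the genuinely technical step to that same reference. So you take essentially the same route as the paper---the load-bearing content in both cases is the citation of \cite{DNY3}, with your sketch serving only as a plausible gloss on why hypotheses (2)--(3) keep the effective number of $(a_1,a_2)$ degrees of freedom polynomial in $M$.
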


We now turn our attention to some deterministic tensor estimates. Given $m \in \Z$, we define the base tensor $h^m_{n n_1 n_2}$ as
\begin{align}
\label{defh}
h^m_{n n_1 n_2} = \ind_{n - n_1 + n_2 = 0} \ind_{|n|^2 - |n_1|^2 + |n_2|^2 = m}.
\end{align}

\noi
We now show the following estimates regarding the base tensor $h^m_{n n_1 n_2}$.
\begin{lemma}
\label{LEM:dten}
Let $N, N_1, N_2 \geq 1$ be dyadic numbers and let $\eps > 0$ be arbitrarily close to 0. Let $J$ be a ball of radius $\sim N$, $J_1$ be a ball of radius $\sim N_1$, and $J_2$ be a ball of radius $\sim N_2$. We define
\[ S := \{ (n, n_1, n_2) \in (\Z^2)^3: n \in J, n_1 \in J_1, n_2 \in J_2 \}. \]

\noi
Thus, we have the following estimates:
\begin{align}
\| h^m_{n n_1 n_2} \cdot \ind_S \|_{n n_1 n_2} &\les N_1^{\frac 12} N_2^{\frac 12} \max \{ N_1^\eps, N_2^\eps \}, \label{012} \\
\| h^m_{n n_1 n_2} \cdot \ind_S \|_{n_1 \to n n_2} &\les \max \{ N^\eps, N_2^\eps \}, \label{1-02} \\
\| h^m_{n n_1 n_2} \cdot \ind_S \cdot \ind_{n_2 \neq 0} \|_{n_2 \to n n_1} &\les \min \big\{ N^{\frac 12}, N_1^{\frac 12} \big\}, \label{2-01} \\
\| h^m_{n n_1 n_2} \cdot \ind_S \cdot \ind_{n \neq 0} \|_{n \to n_1 n_2} &\les \min \big\{ N_1^{\frac 12}, N_2^{\frac 12} \big\}. \label{0-12}
\end{align}
\end{lemma}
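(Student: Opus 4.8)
The plan is to read each tensor norm as a counting quantity for the base tensor $h^m_{n n_1 n_2}$ and then invoke the corresponding part of Lemma \ref{LEM:count1}. For \eqref{012}, since $h^m$ takes only the values $0$ and $1$, we have
\[
\| h^m_{n n_1 n_2} \cdot \ind_S \|_{n n_1 n_2}^2 = \#\{ (n, n_1, n_2) \in S : n - n_1 + n_2 = 0, \ |n|^2 - |n_1|^2 + |n_2|^2 = m \},
\]
which by Lemma \ref{LEM:count1}(i) is $O(N_1 N_2 \max\{N_1^\eps, N_2^\eps\})$; taking square roots yields \eqref{012}. This is the easy base case.

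For the operator norms \eqref{1-02}, \eqref{2-01}, \eqref{0-12}, I would use the standard Schur-test bound: for a $0$-$1$ tensor $h_{a b_A}$, one has
\[
\| h \|_{a \to b_A} \le \Big( \sup_a \#\{ b_A : h_{a b_A} \neq 0 \} \Big)^{1/2} \Big( \sup_{b_A} \#\{ a : h_{a b_A} \neq 0 \} \Big)^{1/2},
\]
since the Hilbert--Schmidt norms of the rows and columns are controlled by these row/column cardinalities. Actually, because $h^m$ is a genuine $0$-$1$ matrix after fixing the split, the cleaner route is: $\| h \|_{a \to b_A}$ is at most the square root of the product of the maximal number of nonzero entries in any row and in any column, but for the estimates claimed here one of these two factors is in fact a pure constant (by Lemma \ref{LEM:count1}(ii)), so it suffices to bound the other factor. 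Concretely, for \eqref{1-02} I split as $n_1 \to (n, n_2)$: fixing $n_1$, Lemma \ref{LEM:count1}(ii) says the number of admissible $(n, n_2)$ is $O(\max\{N^\eps, N_2^\eps\})$, so each row has $O(\max\{N^\eps, N_2^\eps\})$ nonzero entries; fixing $(n,n_2)$ determines $n_1 = n + n_2$ uniquely, so each column has exactly one nonzero entry. Hence $\| h^m \ind_S \|_{n_1 \to n n_2} \le (\max\{N^\eps, N_2^\eps\} \cdot 1)^{1/2} \le \max\{N^\eps, N_2^\eps\}$ (adjusting $\eps$), giving \eqref{1-02}.

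For \eqref{2-01}, I split as $n_2 \to (n, n_1)$ with $n_2 \neq 0$ fixed: Lemma \ref{LEM:count1}(iii) gives $O(\min\{N, N_1\})$ admissible pairs $(n, n_1)$, so each row has $O(\min\{N, N_1\})$ entries, while fixing $(n, n_1)$ forces $n_2 = n_1 - n$ uniquely, so each column has one entry; the Schur bound then gives $\| h^m \ind_S \ind_{n_2 \neq 0} \|_{n_2 \to n n_1} \le (\min\{N, N_1\})^{1/2}$, which is \eqref{2-01}. The estimate \eqref{0-12} is identical with the roles of $(n)$ and $(n_1)$, and $(n_2)$ and $(n_1)$, interchanged, using Lemma \ref{LEM:count1}(iv) and the uniqueness $n = n_1 - n_2$; I would simply remark that the proof is the same.

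The main (minor) obstacle is bookkeeping rather than anything deep: one must be careful that after fixing the "output" index the remaining constraint $|n|^2 - |n_1|^2 + |n_2|^2 = m$ together with $n - n_1 + n_2 = 0$ genuinely pins down a line (as exploited in the proof of Lemma \ref{LEM:count1}(iii)-(iv)) so that the counting bounds apply with the stated powers, and that in each split the "other" variable is determined with multiplicity exactly one so that the second Schur factor is truly a constant. No pairing or randomness enters here, so once the split is chosen correctly each estimate is a one-line consequence of the Schur test plus Lemma \ref{LEM:count1}.
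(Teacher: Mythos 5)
Your proposal is correct and follows essentially the same route as the paper: the Hilbert--Schmidt bound \eqref{012} is exactly the count from Lemma \ref{LEM:count1}(i), and the operator-norm bounds \eqref{1-02}--\eqref{0-12} are obtained via Schur's test, where one factor is controlled by Lemma \ref{LEM:count1}(ii)--(iv) and the other is $O(1)$ because the remaining frequency is determined by $n - n_1 + n_2 = 0$. No substantive difference from the paper's argument.
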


\begin{proof}
For \eqref{012}, we use Lemma \ref{LEM:count1} (i) to obtain
\[ \| h^m_{n n_1 n_2} \cdot \ind_S \|_{n n_1 n_2} \les N_1^{\frac 12} N_2^{\frac 12} \max \{ N_1^\eps, N_2^\eps \}. \]

\noi
For \eqref{1-02}, we use Schur's test and Lemma \ref{LEM:count1} (ii) to obtain
\begin{align*}
\| h^m_{n n_1 n_2} \cdot \ind_S \|_{n_1 \to n n_2} &\leq \bigg( \sup_{n_1} \sum_{n, n_2}  h^m_{n n_1 n_2} \cdot \ind_S  \bigg)^{1/2}  \bigg( \sup_{n, n_2} \sum_{n_1}  h^m_{n n_1 n_2} \cdot \ind_S  \bigg)^{1/2}  \\
&\les \max \{ N^\eps, N_2^\eps \}.
\end{align*}

\noi
For \eqref{2-01}, we use Schur's test and Lemma \ref{LEM:count1} (iii) to obtain
\begin{align*}
\| h^m_{n n_1 n_2} \cdot \ind_S \cdot  \ind_{n_2 \neq 0} \|_{n_2 \to n n_1} &\leq \bigg( \sup_{n_2 \neq 0} \sum_{n, n_1}  h^m_{n n_1 n_2} \cdot \ind_S  \bigg)^{1/2}  \bigg( \sup_{n, n_1} \sum_{n_2}  h^m_{n n_1 n_2} \cdot \ind_S  \bigg)^{1/2}  \\
&\les \min \big\{ N^{\frac 12}, N_1^{\frac 12} \big\}.
\end{align*}

\noi
For \eqref{0-12}, we use Schur's test and Lemma \ref{LEM:count1} (iv) to obtain
\begin{align*}
\| h^m_{n n_1 n_2} \cdot \ind_S \cdot \ind_{n \neq 0} \|_{n \to n_1 n_2} &\leq \bigg( \sup_{n \neq 0} \sum_{n_1, n_2}  h^m_{n n_1 n_2} \cdot \ind_S  \bigg)^{1/2}  \bigg( \sup_{n_1, n_2} \sum_{n}  h^m_{n n_1 n_2} \cdot \ind_S  \bigg)^{1/2}  \\
&\les \min \big\{ N_1^{\frac 12}, N_2^{\frac 12} \big\}.
\end{align*}

\noi
We thus finish our proof.
\end{proof}

\begin{remark} \rm
\label{RMK:ten}
The condition $n_2 \neq 0$ in the estimate \eqref{2-01} is necessary in view of the restriction $n_2 \neq 0$ in Lemma \ref{LEM:count1} (iii). Similarly, the condition $n \neq 0$ in the estimate \eqref{0-12} is necessary in view of the restriction $n \neq 0$ in Lemma \ref{LEM:count1} (iv).
\end{remark}

\section{Bilinear estimates}
\label{SEC:bilin}

In this section, we establish several bilinear estimates that are crucial for proving Theorem \ref{THM:LWP}, the almost sure local well-posedness result of the quadratic NLS \eqref{qNLS}. Specifically, we need to estimate the following term
\[ \big\| \varphi_T \cdot \I_\chi \big( v^{(1)} \cj{v^{(2)}} \big) \big\|_{X^{s, \frac 12 + \dl}}, \]

\noi
where $s, \dl > 0$ are sufficiently small, $\varphi_T(t) = \varphi(t/T)$ with $\varphi$ being a Schwartz function and $0 < T \leq 1$, and $\I_\chi$ is the truncated Duhamel operator as defined in \eqref{defI} with $\chi$ being a smooth cut-off function such that $\chi \equiv 1$ on $[-1, 1]$ and $\chi \equiv 0$ outside of $[-2, 2]$. Here, each of $v^{(1)}$ and $v^{(2)}$ is either an arbitrary space-time function on $\R \times \T^2 $ or the random linear solution with a time cut-off $\chi \cdot z$, where $z$ is as defined in \eqref{defz}.

We first consider the case when neither $v^{(1)}$ nor $v^{(2)}$ is $\chi \cdot z$. Specifically, we show the following bilinear estimate.
\begin{proposition}
\label{PROP:vv}
Let $s > 0$ and let $\dl > 0$ be sufficiently small. Let $0 < T \leq 1$. Then, we have
\[ \big\| \varphi_T \cdot \I_\chi \big( v^{(1)} \cj{v^{(2)}} \big) \big\|_{X^{s, \frac 12 + \dl}} \les T^{\dl} \| v^{(1)} \|_{X^{s, \frac 12 + \dl}} \| v^{(2)} \|_{X^{s, \frac 12 + \dl}}. \]
\end{proposition}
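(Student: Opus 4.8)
\medskip

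The plan is to reduce the estimate to a purely deterministic bilinear bound in $X^{s,b}$-spaces and then close it using the $L^4$-Strichartz estimate (Lemma \ref{LEM:L4}). First I would peel off the time-localization and the Duhamel operator: by Lemma \ref{LEM:lin_t} (applied with the observation that $\I_\chi(\cdot)$ vanishes at $t=0$) we have, for $\frac 12 < \frac 12 + \dl \le b_1 < 1$,
\[
\big\| \varphi_T \cdot \I_\chi \big( v^{(1)} \cj{v^{(2)}} \big) \big\|_{X^{s, \frac 12 + \dl}} \les T^{b_1 - \frac 12 - \dl} \big\| \I_\chi \big( v^{(1)} \cj{v^{(2)}} \big) \big\|_{X^{s, b_1}},
\]
and then by the linear estimate Lemma \ref{LEM:lin} the right-hand side is bounded by $T^{\dl'} \| v^{(1)} \cj{v^{(2)}} \|_{X^{s, b_1 - 1}}$ for a small $\dl' > 0$ (choosing $b_1 = \frac 12 + 2\dl$, say, so that $b_1 - 1 < -\frac 12$ is still admissible and the power of $T$ is positive). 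It therefore suffices to prove the time-independent estimate
\[
\| v^{(1)} \cj{v^{(2)}} \|_{X^{s, -\frac 12 + \dl}} \les \| v^{(1)} \|_{X^{s, \frac 12 + \dl}} \| v^{(2)} \|_{X^{s, \frac 12 + \dl}}.
\]

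\medskip

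By duality, this is equivalent to controlling
\[
\bigg| \iint_{\R \times \T^2} v^{(1)} \cj{v^{(2)}} \, \cj{w} \, dx\, dt \bigg| \les \| v^{(1)} \|_{X^{s, \frac 12 + \dl}} \| v^{(2)} \|_{X^{s, \frac 12 + \dl}} \| w \|_{X^{-s, \frac 12 - \dl}},
\]
where I may assume all three functions have nonnegative Fourier coefficients. The plan is to do a dyadic decomposition $v^{(1)} = \sum_{N_1} P_{N_1} v^{(1)}$, $v^{(2)} = \sum_{N_2} P_{N_2} v^{(2)}$, $w = \sum_{N} P_N w$, and on each block apply Hölder in $L^4_{t,x} \cdot L^4_{t,x} \cdot L^2_{t,x}$ together with the $L^4$-Strichartz estimate of Lemma \ref{LEM:L4}, which gives $\| P_{N_i} v^{(i)} \|_{L^4_{t,x}} \les N_i^{0+} \| P_{N_i} v^{(i)} \|_{X^{0, \frac 12 -}}$. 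This yields, for the block with output frequency $N$ and inputs $N_1, N_2$,
\[
(\text{block}) \les (N_1 N_2)^{0+} \| P_{N_1} v^{(1)} \|_{X^{0, \frac 12-}} \| P_{N_2} v^{(2)} \|_{X^{0, \frac 12-}} \| P_N w \|_{L^2_{t,x}}.
\]
Since the spatial convolution constraint forces $N \les \max(N_1, N_2)$ and at least two of $N, N_1, N_2$ are comparable to $\max(N,N_1,N_2)$, one inserts the weights $\jb{n}^{-s}, \jb{n_1}^{s}, \jb{n_2}^{s}$: the potential loss of a factor $N^{-s}$ on the output is harmless, and the $s$-gain on the larger of $N_1, N_2$ (or the crude bound $N^{0+}$ when the output carries the top frequency) produces a summable geometric factor $\max(N_1,N_2)^{-s+} $, letting us sum over all dyadic triples. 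The small $X^{0,\frac12-}$ versus $X^{0,\frac12+\dl}$ discrepancy is absorbed for free since $\frac 12 - < \frac 12 + \dl$.

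\medskip

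I expect the main (only mildly technical) obstacle to be the bookkeeping of the dyadic summation in the regime where the output frequency $N$ is the largest — there one cannot spend an $s$-power on an input, so one must instead rely entirely on the $N^{0+}$ losses in the $L^4$-Strichartz estimates being beaten by the $\jb{n}^{-s}$ weight on $w$'s frequency $N$; making the $\eps$'s and $\dl$'s consistent (i.e. ensuring the total power of $\max(N,N_1,N_2)$ is strictly negative for $s>0$ fixed and $\dl$ small) is where care is needed. Everything else — the $T^{\dl}$ gain, the reduction to the time-independent estimate, and the Hölder/Strichartz step — is routine. Note that this proposition deliberately avoids any randomness: both $v^{(1)}, v^{(2)}$ are arbitrary deterministic $X^{s,b}$ functions, so no tensor machinery from Subsection \ref{SUBSEC:rand} is needed here; that is reserved for the bilinear estimates involving $\chi \cdot z$.
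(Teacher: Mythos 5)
Your reduction (Lemma \ref{LEM:lin_t} plus Lemma \ref{LEM:lin} to get the $T^\dl$ factor, then duality and dyadic decomposition, then H\"older $L^4\times L^4\times L^2$ with Lemma \ref{LEM:L4}) is exactly the paper's starting point, and your argument does close in the balanced case $N_1\sim N_2$. But there is a genuine gap in the unbalanced case, say $N_1\gg N_2$, where the convolution constraint forces $N\sim N_1$. Applying Lemma \ref{LEM:L4} to the full annulus at frequency $N_1$ costs $N_1^{0+}$, and the $s$-gain $N_1^{-s}$ coming from measuring $v^{(1)}$ in $X^{s,\frac12+\dl}$ is entirely consumed cancelling the output weight $N^s\sim N_1^s$; what remains is a factor $N_1^{0+}N_2^{-s+}$, not the $\max(N_1,N_2)^{-s+}$ you claim. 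The $N_2$-sum is fine, but the sum over $N\sim N_1$ with a growing factor $N_1^{0+}$ cannot be closed by Cauchy--Schwarz against $\|v^{(1)}\|_{X^{s,\frac12+\dl}}\|w\|_{X^{-s,\frac12-\dl}}$: taking block norms $a_{2^j}=b_{2^j}=2^{-j\eps/2}$ shows $\sum_j 2^{j\eps}a_{2^j}b_{2^j}=\infty$ while the square sums converge, so no choice of the $\eps$'s and $\dl$'s rescues the bookkeeping. Your parenthetical ``rely on the $N^{0+}$ loss being beaten by the $\jb{n}^{-s}$ weight on $w$'' does not help either: in your dual formulation, converting $\|P_N w\|_{L^2_{t,x}}$ into the $X^{-s,\frac12-\dl}$ norm produces a factor $N^{+s}$, not a gain.

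The missing idea, which is the actual content of the paper's Cases 2 and 3, is an almost-orthogonality decomposition: partition the high-frequency annuli $\{|n_1|\sim N_1\}$ and $\{|n|\sim N\}$ into balls of radius $\sim N_2$, note that each ball in $n_1$ interacts with $O(1)$ balls in $n$, and apply Lemma \ref{LEM:L4} on those balls --- the point being that the lemma is stated for balls of radius $N$ with arbitrary center, so the loss is $N_2^{0+}$ rather than $N_1^{0+}$. After Cauchy--Schwarz over the balls, the only loss is $N_2^{0+}$, which is absorbed by the $\jb{n_2}^{s}$ weight on the low-frequency factor, leaving $N_2^{0-}$, and the remaining sum over $N\sim N_1$ is handled by Cauchy--Schwarz. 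Without this step (or some substitute such as a genuinely lossless bilinear Strichartz estimate, which Lemma \ref{LEM:L4} alone does not give you on $\T^2$), the high--low interaction does not sum, so as written the proposal does not prove the proposition.
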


\begin{proof}
By Lemma \ref{LEM:lin_t} and Lemma \ref{LEM:lin}, we have
\begin{align}
\big\| \varphi_T \cdot \I_\chi \big( v^{(1)} \cj{v^{(2)}} \big) \big\|_{X^{s, \frac 12 + \dl}} \les T^\dl \big\| \I_\chi \big( v^{(1)} \cj{v^{(2)}} \big) \big\|_{X^{s, \frac 12 + 2\dl}} \les T^\dl \big\| v^{(1)} \cj{v^{(2)}} \big\|_{X^{s, - \frac 12 + 2 \dl}}. 
\label{vv1}
\end{align}

\noi
By duality and dyadic decomposition, we have
\begin{align}
\big\| v^{(1)} \cj{v^{(2)}} \big\|_{X^{s, - \frac 12 + 2 \dl}} &= \sup_{\| w \|_{X^{0, \frac 12 - 2\dl}} \leq 1} \bigg| \int_\R \int_{\T^2} \jb{\nabla}^s \big( v^{(1)} \cj{v^{(2)}} \big) \cj{w} \,dxdt  \bigg| \nonumber \\
&\les \sup_{\| w \|_{X^{0, \frac 12 - 2\dl}} \leq 1} \sum_{\substack{N, N_1, N_2 \geq 1 \\ \text{dyadic}}} \bigg| \int_\R \int_{\T^2} \jb{\nabla}^s \big( P_{N_1} v^{(1)} \cj{ P_{N_2} v^{(2)} } \big) \cj{P_N w} \,dxdt  \bigg|.
\label{vv2}
\end{align}

\noi
Let $n_1, n_2, n$ be the frequencies corresponding to the three terms $P_{N_1} v^{(1)}, P_{N_2} v^{(2)}, P_N w$, respectively. In order for the above integral on $\T^2$ to be non-zero, we must have $n_1 - n_2 - n = 0$. This leads us to the following three cases.

\medskip \noi
\textbf{Case 1:} $N_1 \sim N_2$.

\noi
In this case, we have $N \les N_1 \sim N_2$. By H\"older's inequality and Lemma \ref{LEM:L4}, we have
\begin{align}
\bigg| \int_\R \int_{\T^2} &\jb{\nabla}^s \big( P_{N_1} v^{(1)} \cj{ P_{N_2} v^{(2)} } \big) \cj{P_N w} \,dxdt  \bigg| \nonumber \\
&\les N^s \| P_{N_1} v^{(1)} \|_{L_{t,x}^4} \| P_{N_2} v^{(2)} \|_{L_{t,x}^4} \| P_N w \|_{L_{t,x}^2} \nonumber \\
&\les N_1^{0-} \big\| N_1^{\frac{s}{2} +} P_{N_1} v^{(1)} \big\|_{L_{t,x}^4} \big\| N_2^{\frac{s}{2}} P_{N_2} v^{(2)} \big\|_{L_{t,x}^4} \| P_N w \|_{L_{t,x}^2} \nonumber \\
&\les N_1^{0-} \| P_{N_1} v^{(1)} \|_{X^{s, \frac 12 + \dl}} \| P_{N_2} v^{(2)} \|_{X^{s, \frac 12 + \dl}} \| P_N w \|_{X^{0, 0}} \nonumber \\
&\leq N_1^{0-} \| v^{(1)} \|_{X^{s, \frac 12 + \dl}} \| v^{(2)} \|_{X^{s, \frac 12 + \dl}} \| w \|_{X^{0, \frac 12 - 2\dl}}.
\label{vv3}
\end{align}

\noi
Combining \eqref{vv1}, \eqref{vv2}, \eqref{vv3} and summing over $N_1 \sim N_2 \ges N$, we obtain the desired estimate.

\medskip \noi
\textbf{Case 2:} $N_1 \gg N_2$.

\noi
In this case, we have $N \sim N_1 \gg N_2$. We partition the annulus $\{ |n_1| \sim N_1 \}$ into balls of radius $\sim N_2$ and denote the set of these balls as $\mathcal{J}_1$, and we partition the annulus $\{ |n| \sim N \}$ into balls of radius $\sim N_2$ and denote the set of these balls as $\mathcal{J}$. Note that for each fixed $J_1 \in \mathcal{J}_1$, the product $\ind_{J_1} (n_1) \cdot \ind_J (n)$ is non-zero for at most a fixed constant number of $J \in \mathcal{J}$, and we denote the set of these $J$'s as $\mathcal{J}(J_1)$. Thus, by H\"older's inequality, Lemma \ref{LEM:L4}, and the Cauchy-Schwarz inequality in $J_1$, we have
\begin{align}
\bigg| \int_\R &\int_{\T^2} \jb{\nabla}^s \big( P_{N_1} v^{(1)} \cj{ P_{N_2} v^{(2)} } \big) \cj{P_N w} \,dxdt  \bigg| \nonumber \\
&\les \sum_{J_1 \in \mathcal{J}_1} \sum_{J \in \mathcal{J}(J_1)} N_1^s \| P_{J_1} P_{N_1} v^{(1)} \|_{L_{t,x}^4} \| P_{N_2} v^{(2)} \|_{L_{t,x}^4} \| P_J P_N w \|_{L_{t,x}^2} \nonumber \\
&\les \sum_{J_1 \in \mathcal{J}_1} \sum_{J \in \mathcal{J}(J_1)} N_1^s N_2^{0+} \| P_{J_1} P_{N_1} v^{(1)} \|_{X^{0, \frac 12 + \dl}} \| P_{N_2} v^{(2)} \|_{X^{0, \frac 12 + \dl}} \| P_J P_N w \|_{X^{0, \frac 12 - 2\dl}} \nonumber \\
&\les N_1^s N_2^{0-} \| P_{N_1} v^{(1)} \|_{X^{0, \frac 12 + \dl}} \| P_{N_2} v^{(2)} \|_{X^{s, \frac 12 + \dl}} \| P_N w \|_{X^{0, \frac 12 - 2 \dl}} \nonumber \\
&\sim N_2^{0-} \| P_{N_1} v^{(1)} \|_{X^{s, \frac 12 + \dl}} \| P_{N_2} v^{(2)} \|_{X^{s, \frac 12 + \dl}} \| P_N w \|_{X^{0, \frac 12 - 2 \dl}}.
\label{vv4}
\end{align}

\noi
Combining \eqref{vv1}, \eqref{vv2}, \eqref{vv4}, applying the Cauchy-Schwarz inequality in $N_1 \sim N$, and summing over $N_1 \sim N \gg N_2$, we obtain the desired estimate.

\medskip \noi
\textbf{Case 3:} $N_1 \ll N_2$.

\noi
The steps in this case are the same as those in Case 2 by switching the roles of $v^{(1)}$ and $v^{(2)}$, so that we omit details.
\end{proof}

We now consider the case when at least one of $v^{(1)}$ and $v^{(2)}$ is the random linear solution with a time cut-off $\chi \cdot z$. Our goal is to prove the following estimates. The idea of the computations in the proof comes from \cite{Wang}.
\begin{proposition}
\label{PROP:zz}
Let $\al < \frac 12$. Let $s, \dl > 0$ be sufficiently small. Let $0 < T \leq 1$.

\medskip \noi
\textup{(i)} We have
\begin{align}
\big\| P_{\neq 0} \big( \varphi_T \cdot \I_\chi \big( v \cdot \cj{\chi \cdot z} \big) \big) \big\|_{X^{s, \frac 12 + \dl}} \les T^{\dl - 2 \ta} \| v \|_{X^{s, \frac 12 + \dl}}
\label{vz}
\end{align}

\noi
outside an exceptional set of probability $\leq C \exp(- \frac{c}{T^\ta})$ with $C, c > 0$ being constants and $0 < \ta \ll \dl$.

\medskip \noi
\textup{(ii)} If $v$ has mean zero (i.e. has no zeroth frequency term), we have
\begin{align}
\big\| P_{\neq 0} \big( \varphi_T \cdot \I_\chi \big( \chi \cdot z \cdot \cj{v} \big) \big) \big\|_{X^{s, \frac 12 + \dl}} \les T^{\dl - 2 \ta} \| v \|_{X^{s, \frac 12 + \dl}}
\label{zv}
\end{align}

\noi
outside an exceptional set of probability $\leq C \exp(- \frac{c}{T^\ta})$ with $C, c > 0$ being constants and $0 < \ta \ll \dl$.

\medskip \noi
\textup{(iii)} We have
\begin{align}
\big\| P_{\neq 0} \big( \varphi_T \cdot \I_\chi \big( \chi \cdot z \cdot \cj{\chi \cdot z} \big) \big) \big\|_{X^{s, \frac 12 + \dl}} \les T^{\dl - 2 \ta}
\label{zz}
\end{align}

\noi
outside an exceptional set of probability $\leq C \exp(- \frac{c}{T^\ta})$ with $C, c > 0$ being constants and $0 < \ta \ll \dl$.
\end{proposition}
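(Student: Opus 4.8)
The plan is to prove \eqref{zz} via the operator‑norm/random‑tensor scheme of \cite{DNY3}, reducing it to the deterministic tensor bounds of Lemma \ref{LEM:dten}. First I would strip off the Duhamel operator and the time cut‑off: since $\I_\chi F$ vanishes at $t=0$ and $P_{\neq0}$ commutes with $\chi$, $\varphi_T$ and $\I_\chi$, Lemma \ref{LEM:lin_t} followed by Lemma \ref{LEM:lin} gives
\[
\big\|P_{\neq0}\big(\varphi_T\cdot\I_\chi(\chi z\cdot\cj{\chi z})\big)\big\|_{X^{s,\frac12+\dl}}\les T^{\dl}\big\|P_{\neq0}(\chi^2|z|^2)\big\|_{X^{s,-\frac12+2\dl}},
\]
using that $\chi$ is real; this isolates the gain $T^{\dl}$, the residual $T^{-2\ta}$ coming from the two probabilistic inputs below. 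The twisted space‑time Fourier transform is explicit,
\[
\wt{\chi^2|z|^2}(\tau,n)=\sum_{n_1-n_2=n}\frac{g_{n_1}\cj{g_{n_2}}}{\jb{n_1}^{1-\al}\jb{n_2}^{1-\al}}\,\ft{\chi^2}(\tau-m),\qquad m:=|n|^2-|n_1|^2+|n_2|^2,
\]
so that $\|P_{\neq0}(\chi^2|z|^2)\|_{X^{s,-\frac12+2\dl}}^2=\sum_{n\neq0}\jb n^{2s}\int_\R\jb\tau^{-1+4\dl}\big|\wt{\chi^2|z|^2}(\tau,n)\big|^2\,d\tau$. Modulo the contribution of the constant Fourier mode of $z$, which produces the resonant term $\cj{g_0}\,\chi\,\Phi\,P_{\neq0}z$ (with $\Phi(t)=\int_0^t\chi^3$) and is handled separately, we may take $n_1,n_2\neq0$; then $n=n_1-n_2\neq0$ forces $n_1\neq n_2$, so there is no pairing among the Gaussian indices.

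Next I would peel off the modulation variable $m$. Using $|\ft{\chi^2}(\tau-m)|\les\jb{\tau-m}^{-5}$, Cauchy–Schwarz in $m$, and the convolution bound $\int_\R\jb\tau^{-1+4\dl}\jb{\tau-m}^{-5}\,d\tau\les\jb m^{-1+4\dl}$ (cf.\ Lemma \ref{LEM:conv}), together with a dyadic decomposition $N,N_1,N_2$ of the three frequencies with $M:=\max\{N,N_1,N_2\}$ and the two largest $\sim M$, one arrives at
\[
\big\|P_{\neq0}(\chi^2|z|^2)\big\|_{X^{s,-\frac12+2\dl}}^2\les\sum_{\substack{N,N_1,N_2\\ \mathrm{dyadic}}}\ \sum_{m\in\Z}\jb m^{-1+4\dl}\big\|\jb n^{s}\,G^m_n\big\|_{\l^2_{n\neq0}}^2,
\]
where $G^m_n:=\sum_{n_1,n_2}\jb{n_1}^{-1+\al}\jb{n_2}^{-1+\al}\,g_{n_1}\cj{g_{n_2}}\,h^m_{nn_1n_2}\ind_S$, $h^m_{nn_1n_2}$ is the base tensor \eqref{defh}, and $\ind_S$ enforces $|n|\sim N$, $|n_j|\sim N_j$. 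For each fixed $m$, writing $g_n=\rho_n\eta_n$ and working on the event $\{\sup_{|n|\le M}\rho_n\les M^{\ta}\}$, I would absorb $\rho_{n_1}\rho_{n_2}$ and the smooth weights $\jb n^{s}\sim N^{s}$, $\jb{n_j}^{-1+\al}\sim N_j^{-1+\al}$ into a random tensor and apply Lemma \ref{LEM:rten} with external index $a_1=n$ (so $\jb n\les M$) and $a_2=\varnothing$: outside an exceptional set of probability $\les C\exp(-cM/T^{\ta})$,
\[
\big\|\jb n^{s}G^m_n\big\|_{\l^2_n}\les T^{-\ta}M^{O(\ta)}\,N^{s}N_1^{-1+\al}N_2^{-1+\al}\ \max_{(A_1,A_2)}\big\|h^m_{nn_1n_2}\ind_S\big\|_{n\,n_{A_1}\to n_{A_2}},
\]
the four partitions $(A_1,A_2)$ of $\{n_1,n_2\}$ producing exactly the deterministic norms \eqref{012}–\eqref{0-12} (the restriction $n_2\neq0$ needed for \eqref{2-01} being available since we reduced to $n_1,n_2\neq0$).

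Finally I would insert the tensor bounds. In every frequency regime the dominant partition is the Hilbert–Schmidt one \eqref{012}, giving $\les N_1^{1/2}N_2^{1/2}M^{\eps}$; combined with the weight $N^{s}N_1^{-1+\al}N_2^{-1+\al}$, with $\sum_{|m|\les M^{2}}\jb m^{-1+4\dl}\les M^{O(\dl)}$, and with the $T^{-2\ta}M^{O(\ta)}$ factor, each dyadic block is controlled by $T^{-2\ta}M^{O(\dl+\ta+\eps)}N^{2s}N_1^{-1+2\al}N_2^{-1+2\al}$. A check of the three essentially different regimes ($N\sim N_1\sim N_2$; $N\sim N_1\gg N_2$; $N_1\sim N_2\gg N$) shows that this is summable over all dyadic blocks exactly when $\al<\tfrac12$ (for $s,\dl,\ta,\eps$ small), which yields $\|P_{\neq0}(\chi^2|z|^2)\|_{X^{s,-\frac12+2\dl}}\les T^{-2\ta}$ off an exceptional set of probability $\les C\exp(-c/T^{\ta})$ after summing the dyadic exceptional sets, hence \eqref{zz}. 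The main obstacle is the modulation sum: for a fixed phase $m$ the counting is sharp by Lemma \ref{LEM:count1}/Lemma \ref{LEM:dten}, but $m$ ranges over $\sim M^{2}$ integers, so one must exploit the negative time exponent $-\tfrac12+2\dl$ — equivalently the weight $\jb m^{-1+}$ — to make the $m$‑summation cost only $M^{0+}$, and one must verify that the Hilbert–Schmidt bound (the worst of the four partition norms, and essentially all the second moment sees here) still respects the threshold $\al<\tfrac12$ uniformly in the dyadic parameters.
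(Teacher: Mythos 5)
You only prove part (iii); parts (i) and (ii) are not addressed at all, and they are not corollaries of the same computation. In the paper they require the bilinear operator-norm framework with the deterministic factor $v$ as input, an orthogonality decomposition of the output annulus $\{|n|\sim N\}$ into balls of radius comparable to the random frequency when $N$ is much larger (so that Lemma \ref{LEM:rten2}, not Lemma \ref{LEM:rten}, applies, since the external indices are then unbounded), and in (ii) the mean-zero hypothesis on $v$ is essential (cf.\ Remark \ref{RMK:zv}). None of this appears in your proposal, so as a proof of the Proposition it is substantially incomplete. For (iii) itself your scheme (reduction by Lemmas \ref{LEM:lin_t} and \ref{LEM:lin}, the base tensor $h^m$, paying for the $m$-summation with the $\jb{m}^{-1+}$ weight, then Lemma \ref{LEM:rten} plus the Hilbert--Schmidt bound \eqref{012}, with the threshold $\al<\tfrac12$) is essentially the paper's argument, with harmless variations (explicit decay of $\ft{\chi^2}$ in place of the paper's case split in $\jb{\tau}$ and the kernel of Lemma \ref{LEM:Duh}). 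A small repair: the moduli event should be $\{\sup_{|n|\les M}\rho_n\les T^{-\ta}M^{\dl}\}$ as in \eqref{gauss}; with your threshold $M^{\ta}$ the dyadic exceptional sets have probability $\sim M^2e^{-cM^{2\ta}}$, which does not sum to $C\exp(-c/T^{\ta})$.

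The decisive gap in your treatment of (iii) is the term you defer. You are right that the $n_2=0$ contribution must be removed before the counting/tensor bounds can be used (for the $N_2=1$, $m=0$ block the tuples $(n,n,0)$ number $\sim N^2$, so the Hilbert--Schmidt bound \eqref{012} simply fails there), but ``handled separately'' is precisely where the difficulty lives. Since $e^{i(t-t')\Dl}z(t')=z(t)$, that contribution equals $\cj{g_0}\,\varphi_T\chi\Phi\,P_{\neq 0}z$ with $\Phi(t)=\int_0^t\chi^3$, i.e.\ a fixed smooth compactly supported time factor $\theta$ times $P_{\neq 0}z$, and
\begin{align*}
\big\| \theta\, P_{\neq 0} z \big\|_{X^{s,\frac12+\dl}} = \big\| \jb{\tau}^{\frac12+\dl}\ft{\theta} \big\|_{L^2_\tau}\, \big\| P_{\neq 0} u_0^\o \big\|_{H^s},
\end{align*}
which is almost surely infinite for $\al\geq 0$ and $s>0$ (this is the same divergence recorded in Remark \ref{RMK:zv} for $\|\chi\cdot z\|_{X^{s,\frac12+\dl}}$); moreover this piece sits in the $\cj{g_0}g_n$ chaos component, so no cancellation against the $n_1,n_2\neq 0$ or $n_1=0$ contributions is available. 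Hence the deferred term cannot be absorbed into the right-hand side of \eqref{zz} by any routine argument: you would have to replace the conjugated input by its mean-zero part, modify the ansatz, or identify a genuinely new mechanism. Be aware that you will not find the missing step by mimicking the paper either: the paper's proof of (iii) does not isolate this term but runs it through the $N_2=1$ dyadic block, where the application of \eqref{012} (equivalently Lemma \ref{LEM:count1}\,(i)) degenerates for exactly the reason above, so your instinct to separate it was sound — but the proposal stops at the one point where a real argument is still required.
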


\begin{proof}
We first do the following general setup. Let $v^{(1)}$ and $v^{(2)}$ be two space-time functions. By Lemma \ref{LEM:lin_t}, Lemma \ref{LEM:Duh}, duality, and dyadic decomposition, we have
\begin{align}
&\big\| P_{\neq 0} \big( \varphi_T \cdot \I_\chi \big( v^{(1)} \cj{v^{(2)}} \big) \big) \big\|_{X^{s, \frac 12 + \dl}} \nonumber \\
&\les T^\dl \big\| P_{\neq 0} \big( \I_\chi \big( v^{(1)} \cj{v^{(2)}} \big) \big) \big\|_{X^{s, \frac 12 + 2\dl}} \nonumber \\
&= T^\dl \bigg\| \ind_{n \neq 0} \jb{n}^s \jb{\tau}^{\frac 12 + 2\dl} \int_\R K(\tau, \tau') \wt{v^{(1)} \cj{v^{(2)}}} (\tau', n) \, d\tau' \bigg\|_{\l_n^2 L_\tau^2} \nonumber \\
&= T^\dl \sup_{\| \wt w \|_{\l_n^2 L_\tau^2} \leq 1} \bigg| \sum_{\substack{n, n_1, n_2 \in \Z^2 \\ n_1 - n_2 = n \neq 0}} \jb{n}^s \int \int \int  K \big( \tau, |n|^2 + (\tau_1 - |n_1|^2) - (\tau_2 - |n_2|^2) \big) \nonumber \\
&\quad \times \jb{\tau}^{\frac 12 + 2\dl} \wt{v^{(1)}} (\tau_1, n_1) \cj{\wt{v^{(2)}}} (\tau_2, n_2) \cj{\wt w}(\tau, n) \, d\tau d\tau_1 d\tau_2 \bigg| \nonumber \\
&\les T^\dl \sup_{\| \wt w \|_{\l_n^2 L_\tau^2} \leq 1} \sum_{\substack{N, N_1, N_2 \geq 1 \\ \text{dyadic}}} \bigg| \sum_{\substack{n, n_1, n_2 \in \Z^2 \\ n_1 - n_2 = n \neq 0}} \jb{n}^s \nonumber \\
&\quad \times \int \int \int  K \big( \tau, |n|^2 + (\tau_1 - |n_1|^2) - (\tau_2 - |n_2|^2) \big) \jb{\tau}^{\frac 12 + 2\dl} \nonumber \\
&\quad \times \wt{P_{N_1} v^{(1)}} (\tau_1, n_1) \cj{\wt{P_{N_2} v^{(2)}}} (\tau_2, n_2) \cj{\wt{P_N w}}(\tau, n) \, d\tau d\tau_1 d\tau_2 \bigg|,
\label{setup1}
\end{align}

\noi
where the kernel $K$ satisfies
\begin{align}
|K(\tau, \tau')| \les \frac{1}{\jb{\tau} \jb{\tau - \tau'}}.
\label{K_bdd}
\end{align}

\noi
We now separately discuss the three situations (i), (ii), and (iii).

\medskip \noi
(i) We consider the following two cases.

\medskip \noi
\textbf{Case 1:} $\jb{\tau} \gg N_2^{10}$. 

In this case, by H\"older's inequality in $n_2$, \eqref{K_bdd}, the Cauchy-Schwarz inequalities in $\tau_1, \tau$, and $n$, and Lemma \ref{LEM:conv}, we have
\begin{align}
\eqref{setup1} &\les T^\dl \sup_{\| \wt w \|_{\l_n^2 L_\tau^2} \leq 1} \sum_{\substack{N, N_1, N_2 \geq 1 \\ \textup{dyadic}}} N^s N_2^{-5 + 20\dl} N_2^2 N_2^{-1 + \al} \nonumber \\
&\quad \times \sup_{\substack{n_2 \in \Z^2 \\ \jb{n_2} \sim N_2}} \sum_{\substack{n \in \Z^2 \\ \jb{n} \sim N}} \int \int \int \jb{ (\tau - |n|^2) - (\tau_1 - |n_1|^2) + (\tau_2 - |n_2|^2) }^{-1} \nonumber \\
&\quad \times \big| \wt{P_{N_1} v} (\tau_1, n + n_2) \big| |g_{n_2}(\o) \ft \chi (\tau_2)| \big| \cj{\wt{P_N w}}(\tau, n) \big| \, d\tau_1 d\tau_2 d\tau 
\nonumber \\
&\les T^\dl \sup_{\| \wt w \|_{\l_n^2 L_\tau^2} \leq 1} \sum_{\substack{N, N_1, N_2 \geq 1 \\ \textup{dyadic}}} N^s N_2^{-4 + 20\dl + \al} \sup_{\substack{ n_2 \in \Z^2 \\ \jb{n_2} \sim N_2 }} |g_{n_2} (\o)| \nonumber \\
&\quad \times \big\| \jb{\tau_1}^{\frac 12 + \dl} \wt{P_{N_1} v} (\tau_1, n_1) \big\|_{\l_{n_1}^2 L_{\tau_1}^2} \big\| \wt{P_N w} (\tau, n) \big\|_{\l_n^2 L_\tau^2} \nonumber  \\
&\les T^\dl \sup_{\| w \|_{L_{t, x}^2} \leq 1} \sum_{\substack{N, N_1, N_2 \geq 1 \\ \textup{dyadic}}} N^s N_1^{-s} N_2^{-4 + 20\dl + \al}  \sup_{\substack{ n_2 \in \Z^2 \\ \jb{n_2} \sim N_2 }} |g_{n_2} (\o)| \nonumber\\
&\quad \times \| P_{N_1} v \|_{X^{s, \frac 12 + \dl}} \| P_N w \|_{L_{t, x}^2}.
\label{obs}
\end{align}

\noi
Note that we have the following Gaussian tail bound:
\begin{align}
\sum_{\substack{ n_2 \in \Z^2 \\ \jb{n_2} \sim N_2}} P(|g_{n_2}| > T^{-\ta} N_2^\dl) < C \exp\Big(-c \frac{N_2^\dl}{T^{\ta}} \Big)
\label{gauss}
\end{align}

\noi
for some constants $C, c > 0$ and $0 < \ta \ll \dl$, so that \eqref{obs} gives
\begin{align}
\eqref{setup1} \les T^{\dl - \ta} \sup_{\| w \|_{L_{t, x}^2} \leq 1} \sum_{\substack{N, N_1, N_2 \geq 1 \\ \textup{dyadic}}} N^s N_1^{-s} N_2^{-4 + 21\dl + \al}  \| P_{N_1} v \|_{X^{s, \frac 12 + \dl}} \| P_N w \|_{L_{t, x}^2} 
\label{obs2}
\end{align}

\noi
outside an exceptional set of probability $\leq C \exp(-c N_2^\dl / T^\ta)$. Recall that $\delta$ and $s$ can be made sufficiently small and $\al < \frac 12$. If $N \gg N_1$, we have $N \sim N_2$, so that we can use $N^s \sim N^{0-} N_2^{s+}$ and sum up dyadic $N, N_1, N_2$ in \eqref{obs2} to obtain \eqref{vz}. If $N \ll N_1$, we have $N_1 \sim N_2$, so that we can use $N^s \ll N^{0-} N_2^{s+}$ and sum up dyadic $N, N_1, N_2$ in \eqref{obs2} to obtain \eqref{vz}. If $N \sim N_1$, we can use the Cauchy-Schwarz inequality in $N \sim N_1$ and sum up dyadic $N, N_1, N_2 \geq 1$ in \eqref{obs2} to obtain \eqref{vz}.

\medskip \noi
\textbf{Case 2:} $\jb{\tau} \les N_2^{10}$.

We further split this case into two subcases.

\medskip \noi
\textbf{Subcase 2.1:} $N \les N_2$.

In this case, we have $N_1 \les N_2$. By the Cauchy-Schwarz inequalities in $\tau$ and $n$, \eqref{K_bdd}, and Minkowski's inequality, we have
\begin{align}
\eqref{setup1} &\les T^\dl \sup_{\| \wt w \|_{\l_n^2 L_\tau^2} \leq 1} \sum_{\substack{N, N_1, N_2 \geq 1 \\ \textup{dyadic}}} N^s  N_2^{30\dl} \nonumber \\
&\quad \times \bigg| \sum_{\substack{n, n_1, n_2 \in \Z^2 \\ n_1 - n_2 = n \neq 0}} \int \int \int  K \big( \tau, |n|^2 + (\tau_1 - |n_1|^2) - (\tau_2 - |n_2|^2) \big)  \jb{\tau}^{\frac 12 - \dl} \nonumber \\
&\quad \times \wt{P_{N_1} v} (\tau_1, n_1) \frac{\cj{g_{n_2}} (\o)}{\jb{n_2}^{1 - \al}} \cj{\ft \chi} (\tau_2) \cj{\wt{P_N w}}(\tau, n) \, d\tau d\tau_1 d\tau_2 \bigg| \nonumber \\
&\les T^\dl \sum_{\substack{N, N_1, N_2 \geq 1 \\ \textup{dyadic}}} N^s  N_2^{30\dl} \bigg[ \int \jb{\tau}^{-1 - 2\dl} \nonumber \\
&\quad \times \bigg( \sum_{m \in \Z} \int \int \jb{\tau - \tau_1 + \tau_2 - m}^{-1} \jb{\tau_1}^{- \frac 12 - \dl} \cj{\ft \chi} (\tau_2) \nonumber \\
&\quad \times \bigg\| \sum_{n_1, n_2 \in \Z^2} h^m_{n n_1 n_2} \ind_{S_1} \cdot \frac{\cj{g_{n_2}} (\o)}{\jb{n_2}^{1 - \al}}  \jb{\tau_1}^{\frac 12 + \dl} \wt{P_{N_1} v} (\tau_1, n_1) \bigg\|_{\l_n^2} d\tau_1 d\tau_2 \bigg)^2 d\tau \bigg]^{1/2},
\label{vz1}
\end{align}

\noi
where $h^m_{n n_1 n_2}$ is the base tensor as defined in \eqref{defh} and $S_1$ is a set defined by 
\begin{align}
S_1 : \! &= S_1 (N, N_1, N_2) \nonumber \\
&= \{ (n, n_1, n_2) \in (\Z^2)^3: n \neq 0, |n| \sim N, |n_1| \sim N_1, |n_2| \sim N_2 \}.
\label{s1}
\end{align}

\noi
Note that for $(n, n_1, n_2)$ restricted in $S_1$, we have $\les N_2^2$ choices for the value
\[ m = |n|^2 - |n_1|^2 + |n_2|^2, \]

\noi
which implies that
\begin{align}
\sum_{m \in \Z} \jb{\tau - \tau_1 + \tau_2 - m}^{-1} \les \log (1 + N_2^2) \les N_2^\dl.
\label{summ}
\end{align}

\noi
Thus, continuing with \eqref{vz1}, by H\"older's inequality in $m$, \eqref{summ}, the Cauchy-Schwarz inequality in $\tau_1$, we obtain
\begin{align}
\eqref{setup1} \les T^\dl \sum_{\substack{N, N_1, N_2 \geq 1 \\ \textup{dyadic}}} N^s  N_2^{31\dl} \sup_{m \in \Z} \bigg\| \sum_{n_2 \in \Z^2} h^m_{n n_1 n_2} \ind_{S_1} \cdot \frac{\cj{g_{n_2}} (\o)}{\jb{n_2}^{1 - \al}} \bigg\|_{n \to n_1} \| P_{N_1} v \|_{X^{s, \frac 12 + \dl}}.
\label{vz2}
\end{align}

\noi
By Lemma \ref{LEM:rten}, the Gaussian tail bound \eqref{gauss}, and Lemma \ref{LEM:dten}, we have
\begin{align}
\bigg\| &\sum_{n_2 \in \Z^2} h^m_{n n_1 n_2} \ind_{S_1} \cdot \frac{\cj{g_{n_2}} (\o)}{\jb{n_2}^{1 - \al}} \bigg\|_{n \to n_1} \nonumber \\
&\les T^{-2 \ta} N_2^{-1 + 2\dl + \al} \max \big\{ \| h^m_{n n_1 n_2} \ind_{S_1} \|_{n n_2 \to n_1}, \| h^m_{n n_1 n_2} \ind_{S_1} \|_{n \to n_1 n_2} \big\} \nonumber \\
&\les T^{-2 \ta} N_2^{-\frac 12 + 2\dl + \al}
\label{ten}
\end{align}

\noi
outside an exceptional set of probability $\leq C \exp(-c N_2^\dl / T^\ta)$ for some universal constants $C, c > 0$. Thus, combining \eqref{vz2} and \eqref{ten}, using the fact that $\al < \frac 12$, $N \les N_2$, $N_1 \les N_2$, $\dl, s > 0$ are sufficiently small, and summing over dyadic $N, N_1, N_2 \geq 1$, we obtain the desired inequality \eqref{vz}.

\medskip \noi
\textbf{Subcase 2.2:} $N \gg N_2$.

In this subcase, note that due to \eqref{K_bdd}, we can assume that $\jb{ (\tau - |n|^2) - (\tau_1 - |n_1|^2) + (\tau_2 - |n_2|^2)} \les N_2^{10}$, since otherwise we can conclude by using similar steps as in Case 1. Similarly, we can assume that $\jb{\tau_1} \les N_2^{10}$ and also $\jb{\tau_2} \les N_2^{10}$. Thus, we have $\big| |n|^2 - |n_1|^2 + |n_2|^2 \big| \les N_2^{10}$, so that $\big| |n|^2 - |n_1|^2 \big| \les N_2^{10}$.

We now perform an orthogonality argument. Note that we have $N_1 \sim N \gg N_2$ in this subcase. We decompose the set $\{ |n| \sim N \}$ into balls of radius $\sim N_2$ and denote the set of these balls as $\mathcal{J}$, and we decompose the set $\{ |n_1| \sim N_1 \}$ into balls of radius $\sim N_2$ and denote the set of these balls as $\mathcal{J}_1$. Note that for each fixed $J \in \mathcal{J}$, the product $\ind_J (n) \cdot \ind_{J_1}(n_1)$ is non-zero for at most a fixed constant number of $J_1 \in \mathcal{J}_1$, and we denote the set of these $J_1$'s as $\mathcal{J}_1 (J)$. Thus, by the Cauchy-Schwarz inequalities in $\tau$ and $n$, \eqref{K_bdd}, and Minkowski's inequality, we have
\begin{align}
\eqref{setup1} &\les T^\dl \sup_{\| \wt w \|_{\l_n^2 L_\tau^2} \leq 1} \sum_{\substack{N, N_1, N_2 \geq 1 \\ \textup{dyadic}}}  \sum_{J \in \mathcal{J}}  \sum_{J_1 \in \mathcal{J}_1 (J)}  N^s  N_2^{30\dl} \nonumber \\
&\quad \times \bigg| \sum_{\substack{n, n_1, n_2 \in \Z^2 \\ n_1 - n_2 = n \neq 0}} \int \int \int K \big( \tau, |n|^2 + (\tau_1 - |n_1|^2) - (\tau_2 - |n_2|^2) \big)  \jb{\tau}^{\frac 12 - \dl} \nonumber \\
&\quad \times \wt{P_{J_1} v} (\tau_1, n_1) \frac{\cj{g_{n_2}} (\o)}{\jb{n_2}^{1 - \al}} \cj{\ft \chi} (\tau_2) \cj{\wt{P_J w}}(\tau, n) \, d\tau d\tau_1 d\tau_2 \bigg| \nonumber \\
&\les T^\dl \sup_{\| \wt w \|_{\l_n^2 L_\tau^2} \leq 1} \sum_{\substack{N, N_1, N_2 \geq 1 \\ \textup{dyadic}}} \sum_{J \in \mathcal{J}}  \sum_{J_1 \in \mathcal{J}_1 (J)} N^s  N_2^{30\dl} \| P_J w \|_{\l_n^2 L_\tau^2} \nonumber \\
&\quad \times \bigg[ \int \jb{\tau}^{-1 - 2\dl} \bigg( \sum_{m \in \Z} \int \int \jb{\tau - \tau_1 + \tau_2 - m}^{-1} \jb{\tau_1}^{- \frac 12 - \dl} \cj{\ft \chi} (\tau_2) \nonumber \\
&\quad \times \bigg\| \sum_{n_1, n_2 \in \Z^2} h^m_{n n_1 n_2} \ind_{S_2} \cdot \frac{\cj{g_{n_2}} (\o)}{\jb{n_2}^{1 - \al}}  \jb{\tau_1}^{\frac 12 + \dl} \wt{P_{J_1} v} (\tau_1, n_1) \bigg\|_{\l_n^2} d\tau_1 d\tau_2 \bigg)^2 d\tau \bigg]^{1/2},
\label{vz3}
\end{align}

\noi
where $h_{n n_1 n_2}^m$ is the base tensor as defined in \eqref{defh} and $S_2$ is a set defined by
\begin{align*}
S_2 :\!&= S_2 (N_2, J, J_1) \\
&= \{ (n, n_1, n_2) \in (\Z^2)^3: n \neq 0, |n|^2 - |n_1|^2 \les N_2^{10}, n \in J, n_1 \in J_1, |n_2| \sim N_2 \}. 
\end{align*}

\noi
Note that for $(n, n_1, n_2)$ restricted in $S_2$, we have $\les N_2^{10}$ choices for the value
\[ m = |n|^2 - |n_1|^2 + |n_2|^2, \]

\noi
which implies that
\begin{align}
\sum_{m \in \Z} \jb{\tau - \tau_1 + \tau_2 - m}^{-1} \les \log (1 + N_2^{10}) \les N_2^\dl.
\label{summ2}
\end{align}

\noi
Thus, continuing with \eqref{vz3}, by using $N_1 \sim N$, H\"older's inequalities in $m$, $J$, and $J_1$, \eqref{summ2}, and the Cauchy-Schwarz inequalities in $\tau_1$, $J$, and $N_1 \sim N$, we obtain
\begin{align}
\eqref{setup1} &\les T^\dl \sup_{\| \wt w \|_{\l_n^2 L_\tau^2} \leq 1} \sum_{\substack{N, N_1, N_2 \geq 1 \\ \textup{dyadic}}}  \sum_{J \in \mathcal{J}}  \sum_{J_1 \in \mathcal{J}_1 (J)}    N_2^{31\dl} \| P_J w \|_{\l_n^2 L_\tau^2} \nonumber \\
&\quad \times \sup_{\substack{m \in \Z \\ J \in \mathcal{J} \\ J_1 \in \mathcal{J}_1 (J)}} \bigg\| \sum_{n_2 \in \Z^2} h^m_{n n_1 n_2} \ind_{S_2} \cdot \frac{\cj{g_{n_2}} (\o)}{\jb{n_2}^{1 - \al}} \bigg\|_{n \to n_1} \| P_{J_1} v \|_{X^{s, \frac 12 + \dl}} \nonumber \\
&\les T^\dl \sum_{\substack{N_2 \geq 1 \\ \textup{dyadic}}} N_2^{31 \dl} \sup_{\substack{m \in \Z \\ J \in \mathcal{J} \\ J_1 \in \mathcal{J}_1 (J)}} \bigg\| \sum_{n_2 \in \Z^2} h^m_{n n_1 n_2} \ind_{S_2} \cdot \frac{\cj{g_{n_2}} (\o)}{\jb{n_2}^{1 - \al}} \bigg\|_{n \to n_1} \| v \|_{X^{s, \frac 12 + \dl}}.
\label{vz4}
\end{align}

\noi
By Lemma \ref{LEM:rten2}, the Gaussian tail bound \eqref{gauss}, and Lemma \ref{LEM:dten}, we have
\begin{align}
\bigg\| &\sum_{n_2 \in \Z^2} h^m_{n n_1 n_2} \ind_{S_2} \cdot \frac{\cj{g_{n_2}} (\o)}{\jb{n_2}^{1 - \al}} \bigg\|_{n \to n_1} \nonumber \\
&\les T^{-2 \ta} N_2^{-1 + 2\dl + \al} \max \big\{ \| h^m_{n n_1 n_2} \ind_{S_2} \|_{n n_2 \to n_1}, \| h^m_{n n_1 n_2} \ind_{S_2} \|_{n \to n_1 n_2} \big\} \nonumber \\
&\les T^{-2 \ta} N_2^{-\frac 12 + 2\dl + \al}
\label{ten2}
\end{align}

\noi
outside an exceptional set of probability $\leq C \exp(-c N_2^\dl / T^\ta)$ for some universal constants $C, c > 0$. Thus, combining \eqref{vz4} and \eqref{ten2}, using the fact that $\al < \frac 12$ and $\dl, s > 0$ are sufficiently small, and summing over dyadic $N_2 \geq 1$, we obtain the desired inequality \eqref{vz}.

\medskip \noi
(ii) This part follows similarly from part (i), so that we will be brief here. Using similar steps as in Case 1 of part (i), we can assume that $\jb{\tau} \les N_1^{10}$.

When $N \les N_1$, we use the Cauchy-Schwarz inequalities in $\tau$ and $n$, \eqref{K_bdd}, Minkowski's inequality, and H\"older's inequality in $m$ to obtain
\begin{align}
\eqref{setup1} &\les T^\dl \sum_{\substack{N, N_1, N_2 \geq 1 \\ \textup{dyadic}}} N^s  N_1^{31\dl} \bigg[ \int \jb{\tau}^{-1 - 2\dl}  \bigg( \int \int  \jb{\tau_2}^{- \frac 12 - \dl} \ft \chi (\tau_1) \nonumber \\
&\quad \times \sup_{m \in \Z} \bigg\| \sum_{n_1, n_2 \in \Z^2} h^m_{n n_1 n_2} \ind_{S_3} \cdot \frac{g_{n_1} (\o)}{\jb{n_1}^{1 - \al}}  \jb{\tau_1}^{\frac 12 + \dl} \cj{\wt{P_{N_2} v}} (\tau_2, n_2) \bigg\|_{\l_n^2} d\tau_1 d\tau_2 \bigg)^2 d\tau \bigg]^{1/2},
\label{zv1}
\end{align}

\noi
where $h^m_{n n_1 n_2}$ is the base tensor as defined in \eqref{defh} and $S_3$ is a set defined by
\begin{align*}
S_3 : \! &= S_3 (N, N_1, N_2) \\
&= \{ (n, n_1, n_2) \in (\Z^2)^3: n \neq 0, n_2 \neq 0, |n| \sim N, |n_1| \sim N_1, |n_2| \sim N_2 \}.
\end{align*}

\noi
Then, by \eqref{zv1}, the Cauchy-Schwarz inequality in $\tau_2$, Lemma \ref{LEM:rten}, the Gaussian tail bound, and Lemma \ref{LEM:dten}, we obtain
\begin{align*}
\eqref{setup1} &\les T^\dl \sum_{\substack{N, N_1, N_2 \geq 1 \\ \textup{dyadic}}} N^s  N_1^{31\dl} \sup_{m \in \Z} \bigg\| \sum_{n_1 \in \Z^2} h^m_{n n_1 n_2} \ind_{S_3} \cdot \frac{g_{n_1} (\o)}{\jb{n_1}^{1 - \al}} \bigg\|_{n \to n_2} \| P_{N_2} v \|_{X^{s, \frac 12 + \dl}} \\
&\les T^{\dl - 2 \ta} \sum_{\substack{N, N_1, N_2 \geq 1 \\ \textup{dyadic}}} N^s N_1^{-1 + 33\dl + \al} \sup_{m \in \Z} \max \big\{ \| h^m_{n n_1 n_2} \ind_{S_3} \|_{nn_1 \to n_2}, \| h^m_{n n_1 n_2} \ind_{S_3} \|_{n \to n_1 n_2} \big\} \\
&\les T^{\dl - 2\ta} \sum_{\substack{N, N_1, N_2 \geq 1 \\ \textup{dyadic}}} N^s N_1^{-\frac 12 + 33\dl + \al}
\end{align*}

\noi
outside an exceptional set of probability $\leq C \exp (-c N_1^\dl / T^\ta)$ for some universal constants $C, c > 0$. Thus, since $\al < \frac 12$, $N \les N_1$, $N_2 \les N_1$, and $\dl, s > 0$ are sufficiently small, we can sum over dyadic $N, N_1, N_2 \geq 1$ to obtain the desired inequality \eqref{zv}.

When $N \gg N_1$, as in Subcase 2.2 in part (i), we can assume that $\jb{ (\tau - |n|^2) - (\tau_1 - |n_1|^2) + (\tau_2 - |n_2|^2)} \les N_1^{10}$, $\jb{\tau_1} \les N_1^{10}$, and $\jb{\tau_2} \les N_1^{10}$, so that $|n|^2 + |n_2|^2 \les N_1^{10}$. We perform an orthogonality argument as in Subcase 2.2 in part (i) to decompose $\{ |n| \sim N \}$ into a set of balls (denoted as $\mathcal{J}$) of radius $\sim N_1$ and decompose $\{ |n_2| \sim N_2 \}$ into a set of balls (denoted as $\mathcal{J}_2$) of radius $\sim N_1$. For each $J \in \mathcal{J}$, $\ind_J (n) \cdot \ind_{J_2} (n_2)$ is non-zero for at most a fixed constant number of $J_2 \in \mathcal{J}_2$, and we denote the set of these $J_2$'s as $\mathcal{J}_2 (J)$. By the Cauchy-Schwarz inequalities in $\tau$ and $n$, \eqref{K_bdd}, Minkowski's inequality, H\"older's inequalities in $m$, $J$, and $J_2$, and the Cauchy-Schwarz inequalities in $\tau_2$, $J$, and $N_2 \sim N$, we have
\begin{align}
\eqref{setup1} &\les T^\dl \sup_{\| \wt w \|_{\l_n^2 L_\tau^2} \leq 1} \sum_{\substack{N, N_1, N_2 \geq 1 \\ \textup{dyadic}}}  \sum_{J \in \mathcal{J}}  \sum_{J_2 \in \mathcal{J}_2 (J)}  N^s  N_1^{30\dl} \| P_J w \|_{\l_n^2 L_\tau^2} \nonumber \\
&\quad \times \bigg[ \int \jb{\tau}^{-1 - 2 \dl} \bigg( \sum_{m \in \Z} \int \int \jb{\tau - \tau_1 + \tau_2 - m}^{-1} \jb{\tau_1}^{-\frac 12 - \dl} \ft \chi (\tau_1) \nonumber \\
&\quad \times \bigg\| \sum_{n_1, n_2 \in \Z^2} h^m_{n n_1 n_2} \ind_{S_4} \cdot \frac{g_{n_1} (\o)}{\jb{n_1}^{1 - \al}} \jb{\tau_1}^{\frac 12 + \dl} \cj{\wt{P_{J_2} v}} (\tau_2, n_2) \bigg\|_n d\tau_1 d\tau_2 \bigg)^2 d\tau \bigg]^{1/2} \nonumber \\
&\les T^\dl \sum_{\substack{N_1 \geq 1 \\ \text{dyadic}}} N_1^{31 \dl} \sup_{\substack{m \in \Z \\ J \in \mathcal{J} \\ J_2 \in \mathcal{J}_2 (J)}} \bigg\| \sum_{n_1 \in \Z^2} h^m_{n n_1 n_2} \ind_{S_4} \cdot \frac{g_{n_1} (\o)}{\jb{n_1}^{1 - \al}} \bigg\|_{n \to n_2} \| v \|_{X^{s, \frac 12 + \dl}},
\label{zv3}
\end{align}

\noi
where $h^m_{n n_1 n_2}$ is the base tensor as defined in \eqref{defh} and $S_4$ is a set defined by
\begin{align*}
S_4 :\!&= S_4(N_1, J, J_2) \\
&= \{ (n, n_1, n_2) \in (\Z^2)^3: n \neq 0, n_2 \neq 0, |n|^2 + |n_2|^2 \les N_1^{10}, n \in J, |n_1| \sim N_1, n_2 \in J_2 \}.
\end{align*}

\noi
By Lemma \ref{LEM:rten2}, the Gaussian tail bound, and Lemma \ref{LEM:dten}, we have
\begin{align}
\bigg\| &\sum_{n_1 \in \Z^2} h^m_{n n_1 n_2} \ind_{S_4} \cdot \frac{g_{n_1} (\o)}{\jb{n_1}^{1 - \al}} \bigg\|_{n \to n_2} \nonumber \\
&\les T^{-2 \ta} N_1^{-1 + 2\dl + \al} \max \big\{ \|  h^m_{n n_1 n_2} \ind_{S_4} \|_{n n_1 \to n_2}, \|  h^m_{n n_1 n_2} \ind_{S_4} \|_{n \to n_1 n_2} \big\} \nonumber \\
&\les T^{-2 \ta} N_1^{-\frac 12 + 2\dl + \al}
\label{ten4}
\end{align}

\noi
outside an exceptional set of probability $\leq C \exp(-c N_1^\dl / T^\ta)$ for some universal constants $C, c > 0$. Thus, since $\al < \frac 12$ and $\dl, s > 0$ are sufficiently small, we can combine \eqref{zv3} and \eqref{ten4} and sum over dyadic $N_1 \geq 1$ to obtain the desired inequality \eqref{zv}.

\medskip \noi
(iii) We consider the following two cases.

\medskip \noi
\textbf{Case 1:} $\jb{\tau} \gg N_1^{10} N_2^{10}$.

In this case, by H\"older's inequalities in $n_1$ and $n_2$, \eqref{K_bdd}, and the Cauchy-Schwarz inequality in $\tau$, we have
\begin{align*}
\begin{split}
\eqref{setup1} &\les T^\dl \sup_{\| \wt w \|_{\l_n^2 L_\tau^2} \leq 1} \sum_{\substack{N, N_1, N_2 \geq 1 \\ \textup{dyadic}}} N^s N_1^{-10} N_2^{-10} N_1^2 N_2^2 N_1^{-1 + \al} N_2^{-1 + \al} \\
&\qquad \times \sup_{\substack{ n_1 \in \Z^2 \\ \jb{n_1} \sim N_1 }}  \sup_{\substack{ n_2 \in \Z^2 \\ \jb{n_2} \sim N_2 }} |g_{n_1}(\o)| |g_{n_2}(\o)|  \int \frac{\big| \cj{\wt{P_N w}} (n_1 - n_2, \tau) \big|}{\jb{\tau - |n_1 - n_2|^2 + |n_1|^2 - |n_2|^2}}   \,d\tau \\
&\les T^\dl \sum_{\substack{N, N_1, N_2 \geq 1 \\ \textup{dyadic}}} N^s N_1^{-9 + \al} N_2^{-9 + \al} \sup_{\substack{ n_1 \in \Z^2 \\ \jb{n_1} \sim N_1 }}  \sup_{\substack{ n_2 \in \Z^2 \\ \jb{n_2} \sim N_2 }} |g_{n_1}(\o)| |g_{n_2}(\o)|.
\end{split}
\end{align*}

\noi
Without loss of generality, we can assume that $N_1 \leq N_2$. By using the following Gaussian tail bounds:
\begin{align}
\sum_{\substack{ n_1 \in \Z^2 \\ \jb{n_1} \sim N_1 }} P(|g_{n_1}| &> T^{-\ta} N_2^\dl) < C \exp\Big(-c \frac{N_2^\dl}{T^{\ta}} \Big), \label{gauss1} \\
\sum_{\substack{ n_2 \in \Z^2 \\ \jb{n_2} \sim N_2 }} P(|g_{n_2}| &> T^{-\ta} N_2^\dl) < C \exp\Big(-c \frac{N_2^\dl}{T^{\ta}} \Big),
\label{gauss2}
\end{align}

\noi
we obtain
\[ \eqref{setup1} \les T^{\dl - \ta} \sum_{\substack{N, N_1, N_2 \geq 1 \\ \textup{dyadic}}} N^s N_1^{-9 + \al} N_2^{-9 + 2\dl + \al}. \]

\noi
outside an exceptional set of probability $\leq C \exp(-c N_2^\dl / T^\ta)$. Note that we have $N \les N_2$. Thus, since $\al < \frac 12$ and $\dl, s > 0$ are sufficiently small, we can sum over dyadic $N, N_1, N_2 \geq 1$ to obtain \eqref{zz}.

\medskip \noi
\textbf{Case 2:} $\jb{\tau} \les N_1^{10} N_2^{10}$.

In this case, by the Cauchy-Schwarz inequalities in $\tau$ and $n$, \eqref{K_bdd}, and Minkowski's inequality, we have
\begin{align}
\eqref{setup1} &\les T^\dl \sup_{\| \wt w \|_{\l_n^2 L_\tau^2} \leq 1}  \sum_{\substack{N, N_1, N_2 \geq 1 \\ \textup{dyadic}}}  N^s N_1^{30\dl} N_2^{30 \dl} \nonumber \\
&\quad \times \bigg| \sum_{\substack{n, n_1, n_2 \in \Z^2 \\ n_1 - n_2 = n \neq 0}} \int \int \int K \big( \tau, |n|^2 + (\tau_1 - |n_1|^2) - (\tau_2 - |n_2|^2) \big) \jb{\tau}^{\frac 12 - \dl} \nonumber \\
&\quad \times \frac{g_{n_1} (\o)}{\jb{n_1}^{1 - \al}} \ft \chi (\tau_1) \frac{\cj{g_{n_2}} (\o)}{\jb{n_2}^{1 - \al}} \cj{\ft \chi} (\tau_2) \cj{\wt{P_N w}} (\tau, n) \, d\tau d\tau_1 d\tau_2 \bigg| \nonumber \\
&\les T^\dl \sum_{\substack{N, N_1, N_2 \geq 1 \\ \textup{dyadic}}}  N^s N_1^{30\dl} N_2^{30 \dl} \bigg[ \int \jb{\tau}^{-1 - 2\dl} \nonumber \\
&\quad \times \bigg( \sum_{m \in \Z} \int \int \jb{\tau - \tau_1 + \tau_2 - m}^{-1} \ft \chi (\tau_1) \cj{\ft \chi} (\tau_2) \nonumber \\
&\quad \times \bigg\| \sum_{n_1, n_2 \in \Z^2} h^m_{n n_1 n_2} \ind_{S_1} \cdot \frac{g_{n_1} (\o)}{\jb{n_1}^{1 - \al}} \frac{\cj{g_{n_2}} (\o)}{\jb{n_2}^{1 - \al}} \bigg\|_{\l_n^2} \, d\tau_1 d\tau_2 \bigg)^2 d\tau \bigg]^{1/2},
\label{zz1}
\end{align}

\noi
where $h^m_{n n_1 n_2}$ is the base tensor as defined in \eqref{defh} and $S_1$ is as defined in \eqref{s1}. Note that for $(n, n_1, n_2)$ restricted in $S_1$, we have $\les \max \{ N_1^2, N_2^2 \}$ choices for the value
\[ m = |n|^2 - |n_1|^2 + |n_2|^2, \]

\noi
which implies that
\begin{align}
\sum_{m \in \Z} \jb{\tau - \tau_1 + \tau_2 - m}^{-1} \les \log (1 + N_1^2 N_2^2) \les N_1^\dl N_2^\dl.
\label{summ3}
\end{align}

\noi
Again, we can assume without loss of generality that $N_1 \leq N_2$. Thus, continuing with \eqref{zz1}, by H\"older's inequality in $m$, \eqref{summ3}, Lemma \ref{LEM:rten}, the Gaussian tail bounds \eqref{gauss1} and \eqref{gauss2}, and Lemma \ref{LEM:dten}, we have
\begin{align*}
\eqref{setup1} &\les T^\dl \sum_{\substack{N, N_1, N_2 \geq 1 \\ \textup{dyadic}}}  N^s N_1^{31 \dl} N_2^{31 \dl} \sup_{m \in \Z} \bigg\| \sum_{n_1, n_2 \in \Z^2} h^m_{n n_1 n_2} \ind_{S_1} \cdot \frac{g_{n_1} (\o)}{\jb{n_1}^{1 - \al}} \frac{\cj{g_{n_2}} (\o)}{\jb{n_2}^{1 - \al}} \bigg\|_n \\
&\les T^{\dl - 2\ta} \sum_{\substack{N, N_1, N_2 \geq 1 \\ \textup{dyadic}}}  N^s N_1^{-1 + 31 \dl + \al} N_2^{-1 + 34 \dl + \al} \sup_{m \in \Z} \| h^m_{n n_1 n_2} \ind_{S_1} \|_{n n_1 n_2} \\
&\les T^{\dl - 2\ta} \sum_{\substack{N, N_1, N_2 \geq 1 \\ \textup{dyadic}}}  N^s N_1^{-\frac 12 + 31 \dl + \al} N_2^{-\frac 12 + 35 \dl + \al}
\end{align*}

\noi
outside an exceptional set of probability $\leq C \exp(-c N_2^\dl / T^\ta)$ for some universal constants $C, c > 0$. Note that we have $N \les N_2$. Thus, since $\al < \frac 12$ and $\dl, s > 0$ are sufficiently small, we can sum over dyadic $N, N_1, N_2 \geq 1$ to obtain the desired inequality \eqref{zz}.
\end{proof}

\begin{remark} \rm
\label{RMK:zv}
The frequency projections $P_{\neq 0}$ in all three parts of Proposition \ref{PROP:zz} are necessary in our approach. For \eqref{vz} and \eqref{zv}, we need to avoid the zeroth frequencies due to the necessity of the condition $n \neq 0$ in the base tensor \eqref{defh} (see also Remark \ref{RMK:ten}). For \eqref{zz}, without the frequency projection $P_{\neq 0}$, one can show that the zeroth frequency diverges almost surely when $\al \geq 0$ using the argument as in the proof of Proposition 1.6 in \cite{OO}, so that we need to remove the zeroth frequency.

Furthermore, in part (ii) of Proposition \ref{PROP:zz}, the assumption that $v$ has mean zero is important for us to obtain the desired estimate. Without this assumption, i.e. when $v$ is allowed to be a non-zero constant, the LHS of \eqref{zv} essentially becomes $\| \chi \cdot z \|_{X^{s, \frac 12 + \dl}}$, which is equal to infinity almost surely when $\al \geq 0$.
\end{remark}

\section{Proof of Theorem \ref{THM:LWP}}
\label{SEC:well}

In this section, we prove Theorem \ref{THM:LWP}, the almost sure local well-posedness result of the quadratic NLS \eqref{qNLS}. We fix $\al < \frac 12$ throughout this section.

We recall from \eqref{exp} the following first order expansion:
\[ u = z + v. \]

\noi
Here, $z$ is the random linear solution as in $\eqref{defz}$ and $v$ is the remainder term that satisfies \eqref{vNLS}, which we can write in the following Duhamel formulation:
\begin{align}
v (t) = \G [v] (t) := -i \I_\chi \bigg( |z + v|^2 - \fint |z + v|^2 \bigg) (t),
\label{Duh}
\end{align}

\noi
where $0 < t \leq 1$ and $\I_\chi$ is the Duhamel operator as defined in \eqref{defI} with $\chi$ being a smooth cut-off function such that $\chi \equiv 1$ on $[-1, 1]$ and $\chi \equiv 0$ outside of $[-2, 2]$. We note from \eqref{Duh} that $v$ has mean zero (i.e. has no zeroth frequency term). We show that $\G$ is a contraction map on a ball of the space $X_T^{s, b} \subset C( [-T, T]; H^s(\T^2) )$ for some $s > 0$ and $b > \frac 12$ outside an exceptional set of exponentially small probability.

Let $s, \dl > 0$ be sufficiently small. Let $\varphi$ be an arbitrary smooth function with $\varphi \equiv 1$ on $[-1, 1]$ and $\varphi \equiv 0$ outside of $[-2, 2]$, and let $\varphi_T (t) = \varphi(t / T)$ for $0 < T \leq 1$. By the definition of $X_T^{s,b}$-norm, \eqref{Duh}, Proposition \ref{PROP:vv}, and Proposition \ref{PROP:zz}, we have that for every $0 < T \leq 1$,
\begin{align*}
\begin{split}
\big\| \G [v]  \big\|_{X_T^{s, \frac 12 + \dl}} &\leq \big\| P_{\neq 0} \big( \varphi_T \cdot \I_\chi \big(  |\chi \cdot z + v|^2 \big) \big) \big\|_{X^{s, \frac 12 + \dl}} \\
&\leq \big\| \varphi_T \cdot \I_\chi \big( | v |^2 \big) \big\|_{X^{s, \frac 12 + \dl}} + \big\| P_{\neq 0} \big( \varphi_T \cdot \I_\chi \big( v \cdot  \cj{\chi \cdot z} \big) \big) \big\|_{X^{s, \frac 12 + \dl}} \\
&\quad + \big\| P_{\neq 0} \big( \varphi_T \cdot \I_\chi \big( \chi \cdot z \cdot \cj{ v }  \big) \big) \big\|_{X^{s, \frac 12 + \dl}} + \big\| P_{\neq 0} \big( \varphi_T \cdot \I_\chi \big( | \chi \cdot z |^2 \big) \big) \big\|_{X^{s, \frac 12 + \dl}} \\
&\les T^{\dl - \ta} \Big( \| v \|_{X^{s, \frac 12 + \dl}}^2  +  2\| v \|_{X^{s, \frac 12 + \dl}}  + 1  \Big),
\end{split}
\end{align*}

\noi
outside an exceptional set of probability $\leq C \exp(- \frac{c}{T^\ta})$ with $C, c > 0$ being constants and $0 < \ta \ll \dl$. Taking the infimum over all extensions of $v$ outside the time interval $[-T, T]$, we obtain
\[ \big\| \G [v] \big\|_{X_T^{s, \frac 12 + \dl}} \les T^{\frac{\dl}{2}} \Big( \| v \|_{X_T^{s, \frac 12 + \dl}} + 1 \Big)^2. \]

\noi
Similarly, we obtain the following difference estimate outside an exceptional set of probability $\leq C \exp(- \frac{c}{T^\ta})$:
\begin{align*}
\big\| \G[v_1] - \G[v_2] \big\|_{X_T^{s, \frac 12 + \dl}} \les T^{\frac{\dl}{2}} \| v_1 - v_2 \|_{X_T^{s, \frac 12 + \dl}} \Big( \| v_1 \|_{X_T^{s, \frac 12 + \dl}} + \| v_2 \|_{X_T^{s, \frac 12 + \dl}} + 1 \Big).
\end{align*}

\noi
Therefore, for a fixed $R > 0$, by choosing $T = T(R) > 0$ sufficiently small, we obtain that $\G$ is a contraction on the ball $B_R \subset X_T^{s, \frac 12 + \dl}$ of radius $R$ outside an exceptional set of probability $\leq C \exp(- \frac{c}{T^\ta})$. This finishes the proof of Theorem \ref{THM:LWP}.

\section{Proof of Proposition \ref{PROP:ill}}
\label{SEC:ill}

In this section, we prove Proposition \ref{PROP:ill}, the non-convergence of the Picard second iterate $z_N^{(2)}$ as defined in \eqref{defzN2}.

We fix $n \neq 0$, $t \neq 0$, and $\al \geq \frac 34$. Let us first show that $\lim_{N \to \infty} \E \big[ | \F_x z_N^{(2)} (t, n) |^2 \big] = \infty$. A direct computation yields
\begin{align}
\F_x z_N^{(2)} (t, n) &= \int_0^t e^{- i (t - t') |n|^2} \sum_{\substack{k \in \Z^2 \\  |k| \leq N \\ |n + k| \leq N}} e^{-i t' |n + k|^2 + i t' |k|^2} \frac{g_{n + k} (\o) \cj{g_k} (\o)}{\jb{n + k}^{1 - \al} \jb{k}^{1 - \al}} \, dt' \nonumber \\
&= \sum_{\substack{k \in \Z^2 \\  |k| \leq N \\ |n + k| \leq N}}  \frac{g_{n + k} (\o) \cj{g_k} (\o)}{\jb{n + k}^{1 - \al} \jb{k}^{1 - \al}}  e^{-it |n|^2} \frac{1 - e^{-2i t n \cdot k}}{2i n \cdot k}.
\label{FxzN}
\end{align}

\noi
By independence, we can compute that
\begin{align}
\E \big[ | \F_x z_N^{(2)} (t, n) |^2 \big] = \sum_{\substack{k \in \Z^2 \\  |k| \leq N \\ |n + k| \leq N}} \frac{1}{\jb{n + k}^{2 - 2 \al} \jb{k}^{2 - 2 \al}} \frac{2 \sin (t n \cdot k)^2 }{|n \cdot k|^2}.
\label{En}
\end{align}

\noi
We focus on the case when $n \cdot k = 0$, so that \eqref{En} is bounded from below (up to some constant depending only on $n$ and $t$) by
\begin{align}
\sum_{\substack{k \in \Z^2 \\ n \cdot k = 0 \\  |k| \leq N }} \frac{1}{ \jb{k}^{4 - 4\al} }.
\label{sumk}
\end{align}

\noi
We write $n = (n_1, n_2)$. Note that if either $n_1 = 0$ or $n_2 = 0$, then we can easily see that \eqref{sumk} diverges as $N \to \infty$ when $\al \geq \frac 34$. If $n_1 \neq 0$ and $n_2 \neq 0$, we note that all $k$'s that satisfy $n \cdot k = 0$ are of the form $k = ak'$, where $a \in \Z$ and
\[ k' = \Big( - \frac{n_2}{ \gcd (n_1, n_2) }, \frac{n_1}{ \gcd (n_1, n_2) } \Big). \]

\noi
Thus, \eqref{sumk} is bounded from below by
\[ \sum_{\substack{a \in \Z \\ 0 < |a| \leq N / |k'|}} \frac{1}{|a|^{4 - 4\al} \jb{k'}^{4 - 4\al}}, \]
which increases to infinity as $N \to \infty$ when $\al \geq \frac 34$. This shows that 
\begin{align}
\E \big[ | \F_x z_N^{(2)} (t, n) |^2 \big] \longrightarrow \infty
\label{L2infty}
\end{align}

\noi
as $N \to \infty$.

We now show that, for any sequence $\{N_\l\}_{\l \in \N} \subset \N$, the sequence of random variables $\{ \F_x z_{N_\l}^{(2)} (t, n) \}_{\l \in \N}$ is not tight. Assume for the sake of contradiction that $\{ \F_x z_{N_\l}^{(2)} (t, n) \}_{\l \in \N}$ is tight. Using the explicit formula of $\F_x z_{N_\l}^{(2)} (t, n)$ in \eqref{FxzN}, we can write $\F_x z_{N_\l}^{(2)} (t, n) = X_\l + i Y_\l$, where $X_\l, Y_\l \in \H_{\leq 2}$ are real-valued. Here, we recall that the space $\H_{\leq 2}$ is as defined in \eqref{Hk}. By Lemma \ref{LEM:wie}, we have
\begin{align}
\E \big[ | \F_x z_{N_\l}^{(2)} (t, n) |^4 \big]^{\frac 14} &\leq \E \big[ |X_\l|^4 \big]^{\frac 14} + \E \big[ |Y_\l|^4 \big]^{\frac 14} \nonumber \\
&\leq 3\E \big[ |X_\l|^2 \big]^{\frac 12} + 3 \E \big[ |Y_\l|^2 \big]^{\frac 12} \nonumber \\
&\leq 3\sqrt{2} \E \big[ |\F_x z_{N_\l}^{(2)} (t, n) |^2 \big]^{\frac 12}.
\label{L4bdd}
\end{align}

\noi
By the Paley-Zygmund inequality and \eqref{L4bdd}, we have
\begin{align}
P \bigg( | \F_x z_{N_\l}^{(2)} (t, n) |^2 > \frac{\E \big[ | \F_x z_{N_\l}^{(2)} (t, n) |^2 \big]}{2} \bigg) &\geq \frac{1}{4} \frac{\big( \E \big[ | \F_x z_{N_\l}^{(2)} (t, n) |^2 \big] \big)^2}{\E \big[ | \F_x z_{N_\l}^{(2)} (t, n) |^4 \big]} \geq \frac{1}{1296}.
\label{ill2}
\end{align}

\noi
By tightness, we know that there exists a constant $A > 0$ such that for all $\l \in \N$, 
\begin{align}
P \big( \big| \F_x z_{N_\l}^{(2)} (t, n) \big| > A \big) < \frac{1}{1296}.
\label{ill3}
\end{align}

\noi
Due to \eqref{ill2} and \eqref{ill3}, we must have $\E \big[ | \F_x z_{N_\l}^{(2)} (t, n) |^2 \big] \leq 2 A^2$ for all $\l \in \N$, which is a contradiction to \eqref{L2infty}. Therefore, the sequence $\{ \F_x z_{N_\l}^{(2)} (t, n) \}_{\l \in \N}$ is not tight. This finishes the proof of Proposition \ref{PROP:ill}.

\begin{remark} \rm
In the proof above, although we only considered the case when $n \cdot k = 0$, we point out that the range $\al \geq \frac 34$ for the divergence of $\E \big[ | \F_x z_N^{(2)} (t, n) |^2 \big]$ is sharp. More precisely, suppose that we have $\al < \frac 34$. Note that the RHS of \eqref{En} converges as $N \to \infty$ if and only if the following integral converges:
\begin{align}
\int_{\{ x \in \R^2: |x| \leq N \}} \frac{1}{\jb{x}^{4 - 4 \al}} \frac{ \sin (t n \cdot x)^2 }{ |n \cdot x|^2 } \,dx.
\label{int}
\end{align}

\noi
By using a change of variable, we note that the convergence of \eqref{int} is equivalent to the convergence of the following term:
\begin{align*}
\int_0^N \int_0^N \frac{1}{(1 + |y_1|^2 + |y_2|^2)^{2 - 2\al}} \frac{ \sin(t y_1)^2 }{|y_1|^2} \,dy_1 dy_2,
\end{align*}

\noi
which can easily be seen to converge when $\al < \frac 34$.
\end{remark}

\begin{ackno} \rm
The author would like to thank his advisor, Tadahiro Oh, for suggesting this problem and for his support throughout the entire work. Also, the author would like to thank Bjoern Bringmann and Yuzhao Wang for pointing out a mistake in a previous version of the work. In addition, the author is grateful to Guangqu Zheng and Younes Zine for helpful suggestions and discussions. R.L. was supported by the European Research Council (grant no. 864138 ``SingStochDispDyn'').
\end{ackno}

\end{document}